\newcommand{\zerarcounters}
{
\setcounter{equation}{0}
\setcounter{theorem}{0}
}
\newcommand{\OBSI}{\begin{remark}\begin{rm}}
\newcommand{\OBSF}{\end{rm}\end{remark}}
\newcommand{\DEFI}{\begin{definition}\begin{rm}}
\newcommand{\DEFF}{\end{rm}\end{definition}}
\newcommand{\be}{\begin{eqnarray}}
\newcommand{\en}{\end{eqnarray}}
\newcommand{\bee}{\begin{eqnarray*}}
\newcommand{\ene}{\end{eqnarray*}}
\DeclareMathOperator*{\supp}{supp}
\newtheorem{definition}{\bf Definition}[section]
\newtheorem{proposition}{\bf Proposition}[section]
\newtheorem{lemma}{\bf Lemma}[section]
\newtheorem{theorem}{\bf Theorem}[section]
\newtheorem{corollary}{\bf Corollary}[section]
\newtheorem{remark}{Remark}[section]
\newtheorem{example}{Example}[section]
\title{Decay rates given by regularly varying functions for $C_0$-semigroups on Banach spaces}
\author{Genilson Santana and Silas L. Carvalho}
\begin{document}

\date{}
\maketitle

\begin{abstract}
We study rates of decay for (possibly unbounded) $C_0$-semigroups on Banach spaces under the assumption that the norm of the resolvent of the respective semigroup generator grows as a regularly varying function of type $\beta>0$, that is, as $|s|^{\beta}\ell(1+|s|)$ or $|s|^{\beta}/\kappa(1+|s|)$, where $\ell,\kappa$ are arbitrary monotone and slowly varying functions. The main result extends the estimates obtained by Deng, Rozendaal and Veraar (J. Evol. Equ. 24, 99 (2024)) to this setting of regularly varying functions and improves the estimates obtained by Santana and Carvalho (J. Evol. Equ. 24, 28 (2024)) in case  $|s|^{\beta}\log(1+|s|)^b$, with $b\ge 0$.
\end{abstract}

\noindent{\bf{Keywords}}: $C_0$-semigroups, decay given by regularly varying functions, Fourier multipliers, Fourier types, Besov spaces.


\section{Introduction}
\zerarcounters

The asymp
totic theory of semigroups provides tools for investigating the convergence to zero of mild and classical solutions to the abstract Cauchy problem 
\begin{equation}\label{1.0}
\left\{\begin{array}{ll}u'(t)+Au(t)=0, \ \ \  t\geq 0, \\ u(0)=x. \end{array}\right.
\end{equation}
It is known that $\eqref{1.0}$ has a unique mild solution for every $x\in X$ and that the solution depends continuously on $x$ if, and only if, $-A$ generates a $C_0$-semigroup  $(T(t))_{t\geq 0}$ on $X$ (see~\cite{valued,Engel}). In this case,
the unique solution $u$ to $\eqref{1.0}$ is given by $u(t)=T(t)x, \  \ \forall~t\geq 0$, and if $x\in \mathcal{D}(A)$, then
$u \in C^{1}([0,\infty),X)$ (see \cite[Proposition~II.6.2]{Engel}).

In the last two decades, there have been published a great number of works in semigroup theory devoted to the study of decay rates for classical solutions (assuming some regularity for the initial data) and some spectral properties for the infinitesimal generator  (see~\cite{Duykaerts, chill, chill1, rozendaal,Stahn}). Such results arise from concrete problems, for example, in the study
 of the damped wave equation \cite{nalini, burq, chill2, lebeau}.

However, in many concrete problems, the associated semigroup $(T(t))_{t\ge 0}$ is not bounded, or at least one cannot prove such boundedness (see e.g,~\cite{Paunonen, renardy, roz2}). Hence, the problem of proving polynomial decay for not necessarily bounded semigroups is of great importance. In this vein, we highlight the pioneering work of Bátkai, Engel, Pr\"uss and Schnaubelt~\cite{batki}. 

Given the relevance of the result concerning the optimal decay of bounded semigroups whose resolvent exhibits polynomial growth (see \cite{tomilov}), the problem of obtaining more refined rates, now under the assumption that the resolvent grows according to functions which are finer than polynomial ones, has become of great interest (see \cite{chill}). In this context, by  addressing the same question without assuming the boundedness of the semigroup, we seek to establish new decay rates, considering that the resolvent growth is described by the product of a polynomial function and a slowly varying function (for example, $f(\lambda)=\lambda^{\beta} \log(1+\lambda)^{b}$, with $\beta,b>0$; see Subsection~\ref{sub222} for discussion involving regularly varying functions).

In 2018, Rozendaal and Veraar~\cite{rozendaal} obtained new estimates that improved the estimates obtained so far for the decay rates of unbounded semigroups. These new estimates, presented in the next theorem, take into account geometric properties of the underlying Banach space (namely, its Fourier type). 
\begin{theorem}[\texorpdfstring{\cite[Theorem~4.9]{rozendaal}}{rozendaal}]\label{theoem 1.9}
\begin{rm}
 Let $(T(t))_{t\geq 0}$ be a $C_0$-semigroup with generator $-A$ defined in a Banach space
$X$ with Fourier type $p\in [1,2]$, and let $\frac{1}{r}=\frac{1}{p}-\frac{1}{p'}$ (where $\frac{1}{p}+\frac{1}{p'}=1$). Suppose that  $\overline{\mathbb{C}_{-}}\subset \rho(A)$ and that there exist $\beta,C \geq 0$ such that  $\|(\lambda+A)^{-1}\|_{\mathcal{L}(X)}\leq C
(1+|\lambda|)^{\beta}$ for each $\lambda \in \overline{\mathbb{C}_{-}} $. Let $\tau>\beta+1/r$; then, for each $\varepsilon>0$, there exists $C_\varepsilon \geq 0$ such that for each $t \ge 1$,
\begin{equation}\label{eeq3}
    \|T(t)(1+A)^{-\tau}\|_{\mathcal{L}(X)}\leq C_{\varepsilon}t^{1- \frac{\tau-1/r}{\beta}+\varepsilon}.
\end{equation}
\end{rm}    
\end{theorem}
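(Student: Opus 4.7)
The plan is to represent $T(t)(1+A)^{-\tau}$ as an inverse Fourier integral of the resolvent along the imaginary axis, to bound the resulting operator via a Fourier multiplier theorem adapted to the Fourier type $p$ of $X$, and to balance the frequency cutoff to optimize the decay rate.

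First, I would fix $x \in \mathcal{D}(A^n)$ for $n$ sufficiently large and use the assumption $\overline{\mathbb{C}_-}\subset \rho(A)$ together with the polynomial resolvent bound $\|(is+A)^{-1}\|\leq C(1+|s|)^{\beta}$ to justify (after integration by parts in the spectral variable to gain decay at infinity, and a standard regularization argument) a representation of the form
$$T(t)(1+A)^{-\tau}x = \frac{1}{2\pi}\int_{\mathbb{R}} e^{ist}\, (is+A)^{-1}(1+A)^{-\tau} x\, ds.$$
This reduces the estimate to a Fourier-multiplier problem with operator-valued symbol $M(s)=(is+A)^{-1}(1+A)^{-\tau}$.

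Second, I would invoke the Fourier type $p$ of $X$ to pass from a Besov-regularity estimate on $M$ to an $\mathcal{L}(X)$-bound on the integral. The standard consequence of $X$ having Fourier type $p \in [1,2]$ is that, with $1/r = 1/p - 1/p'$, the inverse Fourier transform maps $B^{1/r}_{p,1}(\mathbb{R};\mathcal{L}(X))$ (or a close variant) continuously into $L^\infty(\mathbb{R};\mathcal{L}(X))$, yielding an estimate
$$\|T(t)(1+A)^{-\tau}\|_{\mathcal{L}(X)}\leq C\,\bigl\|s\mapsto e^{ist}M(s)\bigr\|_{B^{1/r}_{p,1}(\mathbb{R};\mathcal{L}(X))}.$$

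Third, I would perform a Littlewood--Paley dyadic decomposition $M=\sum_k M_k$, where $M_k$ is frequency-localized at $|s|\sim 2^k$. On the $k$-th block, the resolvent hypothesis yields $\|M_k(s)\|\lesssim 2^{k(\beta-\tau)}$, while each derivative in $s$ costs an extra factor $2^{k\beta}$ (by the resolvent identity applied to $\partial_s(is+A)^{-1}=-i(is+A)^{-2}$ and induction). Combining this with the oscillation $e^{ist}$ I would split at the critical dyadic scale $2^{k_0}\sim t^{1/\beta}$: for $k\leq k_0$, the exponential oscillation contributes $t^{-N}$ after integrating by parts sufficiently many times, while for $k>k_0$ the $(1+A)^{-\tau}$-factor dominates. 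Summing the two regimes and optimizing yields the target rate $t^{1-(\tau-1/r)/\beta}$, with the $+\varepsilon$ absorbing the polylogarithmic loss from the summation over dyadic scales and from passing to a slightly weaker Besov index.

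The main obstacle I expect is the operator-valued multiplier step: one must rigorously relate the Besov norm of $M(s)$ on the imaginary axis to the $\mathcal{L}(X)$-operator norm of $T(t)(1+A)^{-\tau}$, using only that $X$ has Fourier type $p$ (not UMD or $R$-boundedness). Making the smoothing factor $(1+A)^{-\tau}$ cooperate with the dyadic estimates---so that the exponent $\tau - 1/r$ really appears as an \emph{effective} smoothing budget against the resolvent growth $\beta$---will require a careful interpolation, and tracking where the arbitrarily small $\varepsilon$ is lost. Justifying the contour representation for unbounded semigroups, where one cannot assume a priori bounds on $T(t)$ itself, is a secondary technical issue handled by approximation on $\mathcal{D}(A^n)$ and density.
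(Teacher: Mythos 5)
First, a point of orientation: the paper does not prove this statement at all — it is imported verbatim as Theorem 4.9 of \cite{rozendaal} and used only as background, and the related machinery the paper does develop (Propositions~\ref{prop3.1}--\ref{Prop3.1}, Theorems~\ref{teo32}--\ref{teo31} and the interpolation step) follows the Fourier-multiplier/transference route of \cite{rozendaal} and \cite{deng}. So your proposal must be judged against that route, and there it has a genuine gap rather than a fixable technicality.

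The gap is your second step. The inequality $\|T(t)(1+A)^{-\tau}\|_{\mathcal{L}(X)}\lesssim \bigl\|s\mapsto e^{ist}M(s)\bigr\|_{B^{1/r}_{p,1}(\mathbb{R};\mathcal{L}(X))}$ is a Hausdorff--Young/Besov embedding for \emph{$\mathcal{L}(X)$-valued} functions, and such an embedding requires the value space $\mathcal{L}(X)$, not $X$, to have Fourier type $p$. For infinite-dimensional $X$ the operator space $\mathcal{L}(X)$ has only the trivial Fourier type $1$, so your step 2 only delivers the case $1/r=0$; but the gain $1/r=1/p-1/p'$ is precisely the content of the theorem. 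The exponent $1/r$ can only be harvested by keeping the functions $X$-valued: one proves that the multiplier $T_{R(i\cdot,A)^k}$ (for $k$ near $n$) is bounded from $L^p(\mathbb{R};Y)$, or from a Besov space $B^{1/r}_{p,p}(\mathbb{R};Y)$ as in Proposition~\ref{prop22}, into $L^{p'}(\mathbb{R};X)$, where $Y$ is a fractional domain/interpolation space of $A$ — this uses uniform bounds like $\sup_{\xi}\|R(i\xi,A)^k(1+A)^{-k\beta-\cdot}\|<\infty$ together with $L^r$-integrability in $\xi$ of the scalar function $\|R(i\xi,A)(1+A)^{-\tau}\|$ — and then transfers multiplier boundedness into the decay $t^{-n}$ of $T(t)$ on $Y$ via an identity in which the multiplier acts on $X$-valued test functions built from the orbit (Theorem 4.6 in \cite{rozendaal}, Theorem 3.2 in \cite{deng}, Theorems~\ref{teo32}--\ref{teo31} here); non-integer rates, and the loss $\varepsilon$, come from real interpolation between the integer cases. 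Your own dyadic accounting already shows the collapse: bounding the low-frequency blocks by absolute values (integration by parts gives roughly $t^{-N}2^{k(N\beta+1+\beta-\tau)}$ per block) and summing up to $2^{k_0}\sim t^{1/\beta}$ yields the exponent $1-\frac{\tau-1}{\beta}$, i.e.\ the Fourier-type-$1$ result, and no number of integrations by parts recovers the missing factor $t^{(1-1/r)/\beta}$. A secondary but real problem is your starting representation: for the interesting range $\beta+1/r<\tau\le\beta+1$ the integral of $e^{ist}(is+A)^{-1}(1+A)^{-\tau}x$ is not absolutely convergent, and the rigorous identities in \cite{rozendaal,deng} instead use higher resolvent powers $R(i\cdot,A)^{n+1}$ (whence the factor $t^{-n}$) or a distributional multiplier formulation — another reason the proof is forced into the multiplier/transference framework rather than a pointwise contour estimate of the symbol.
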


In 2024, Santana and Carvalho~\cite{nos} went a step further by considering that $\|(\lambda+A)^{-1}\|_{\mathcal{L}(X)}\lesssim (1+|\lambda|)^{\beta}\log(2+|\lambda|)^b$, with $b\ge 0$ (a particular
example of a regularly varying function of index $\beta$; see Definition~\ref{reg}). The result was obtained by combining the techniques developed in \cite{rozendaal} for Fourier multipliers with the techniques developed by Batty, Chill and Tomilov in \cite{chill} for the functional calculus of sectorial operators. 
\begin{theorem}[\texorpdfstring{\cite[Theorem~1.13]{nos}}{nos}]\label{theo4.51}
\begin{rm}
Let $\beta>0$, $b\geq 0$ and let $(T(t))_{t\geq 0}$ be a $C_0$-semigroup with generator $-A$  defined in a Banach space $X$ with Fourier type $p\in [1,2]$. Suppose that $\overline{\mathbb{C}_{-}}\subset \rho(A)$ and that for each $\lambda \in \mathbb{C}$ with $\text{Re}(\lambda)\le 0$,
\[\|(\lambda+A)^{-1}\|_{\mathcal{L}(X)}\lesssim (1+|\lambda|)^{\beta}(\log(2+|\lambda|))^b.\]
 Let $r\in [1,\infty]$ be such that $\frac{1}{r}=\frac{1}{p}-\frac{1}{p'}$ and let $\tau > 0$ be such that $\tau>\beta+\frac{1}{r}$. Then, for each $\delta>0$, there exist constants $c_{\delta,\tau}\ge 0$ and $t_0>1$ such that for each $t\geq t_0$,
\begin{equation}\label{eqTh1.1}
 \|T(t)(1+A)^{-\tau}\|_{\mathcal{L}(X)}\leq c_{\delta,\tau}t^{1-\frac{\tau-r^{-1}}{\beta}}\log(t)^{\frac{b(\tau-r^{-1})}{\beta}+\frac{1+\delta}{r}}.
\end{equation}
\end{rm}
\end{theorem}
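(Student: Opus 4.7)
The plan is to combine the Fourier-multiplier technique of~\cite{rozendaal} with refinements tailored to logarithmic weights in the spirit of~\cite{chill}. I would begin from the functional-calculus representation
\[
T(t)(1+A)^{-\tau}x = \frac{1}{2\pi i}\int_{i\mathbb{R}} e^{\lambda t}(1+\lambda)^{-\tau}(\lambda+A)^{-1}x\,d\lambda,
\]
valid on a dense subspace of $X$ (e.g.\ $\mathcal{D}(A^{k})$ for $k$ large enough to absorb the resolvent growth), and view the integrand, after the substitution $\lambda=is$, as an $\mathcal{L}(X)$-valued Fourier-multiplier symbol $m_t(s)=e^{ist}(1+is)^{-\tau}(is+A)^{-1}$ on $\mathbb{R}$. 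The norm of $T(t)(1+A)^{-\tau}$ is then controlled by the multiplier norm of $m_t$ from $L^p(\mathbb{R};X)$ to $L^{p'}(\mathbb{R};X)$ which, by the Bernstein-type theorem for Banach spaces of Fourier type $p$ (the key technical tool of~\cite{rozendaal}), is dominated by a Besov-type norm of order $1/r$ of $m_t$.

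The next step is a dyadic Littlewood--Paley decomposition of $m_t$ at scales $|s|\sim 2^k$. On the $k$-th dyadic annulus the resolvent bound contributes a factor of order $2^{k\beta}k^b$, the smoothing factor $(1+is)^{-\tau}$ contributes $2^{-k\tau}$, and the $1/r$-fractional derivative demanded by $B^{1/r}_{r,1}$ contributes $2^{k/r}$; the resulting series $\sum_k 2^{k(\beta-\tau+1/r)}k^b$ converges because $\tau>\beta+1/r$. To extract decay in $t$, I would truncate the decomposition at a scale $K(t)\sim\beta^{-1}\log t$, using either integration by parts (trading derivatives for powers of $t$) or a high-frequency/low-frequency interpolation. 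This choice naturally transforms the polynomial sum into the rate $t^{1-(\tau-1/r)/\beta}$ and converts the $k^b$-factor into $\log(t)^{b(\tau-1/r)/\beta}$.

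The main obstacle is the sharp bookkeeping of the remaining logarithmic losses. The additional factor $\log(t)^{(1+\delta)/r}$ should arise from the refinement of the Besov embedding $B^{1/r}_{r,1}\hookrightarrow L^{\infty}$ exploited in~\cite{chill}, which replaces the crude $t^{\varepsilon}$-loss of Theorem~\ref{theoem 1.9} by a logarithmic loss; the parameter $\delta>0$ captures the unavoidable slack in this embedding on spaces of Fourier type $p<2$, and the term vanishes in the Hilbert case $p=2$, $r=\infty$. The delicate point is verifying that the two logarithmic contributions---one from the weight $k^b$ inside the dyadic sum and one from the Bernstein-type estimate---combine additively rather than multiplicatively, and that the optimization at the scale $K(t)$ does not introduce further losses. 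If this cross-term analysis is executed carefully, the bound~\eqref{eqTh1.1} follows for all $t\geq t_0$, with the constants $c_{\delta,\tau}$ blowing up as $\delta\to 0$ in the Besov embedding step.
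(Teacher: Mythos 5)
Note first that the present paper does not prove Theorem~\ref{theo4.51}: it is quoted from~\cite{nos}, where it is obtained by combining the operator-valued $(L^p,L^{p'})$ Fourier-multiplier machinery of~\cite{rozendaal} with the complete Bernstein/Stieltjes functional calculus of~\cite{chill} — essentially the machinery reproduced in Section~\ref{three} here (resolvent estimates against $\Psi_\varepsilon(B^{-1})$, interpolation spaces $D_{f_\sigma}(\tau,q)$, Karamata's theorem), which this paper then sharpens via Proposition~\ref{prop22}. Your sketch takes a different route, and it has genuine gaps rather than just missing details.

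The first gap is the opening representation $T(t)(1+A)^{-\tau}x=\frac{1}{2\pi i}\int_{i\mathbb{R}}e^{\lambda t}(1+\lambda)^{-\tau}(\lambda+A)^{-1}x\,d\lambda$. The semigroup is not assumed bounded, so its growth bound may be strictly positive while the hypotheses only control the resolvent on $\overline{\mathbb{C}_-}$; shifting the Laplace-inversion contour from a vertical line $\mathrm{Re}\,\lambda=\omega'$ (to the right of the growth bound) onto $i\mathbb{R}$ requires uniform resolvent bounds in the intermediate strip, which you do not have — a Neumann-series argument only yields a neighbourhood of $i\mathbb{R}$ of width comparable to $1/\|(is+A)^{-1}\|$, not the whole strip. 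Moreover, under $\tau>\beta+1/r$ alone the integrand decays only like $|s|^{-1/r}\log^b|s|$, so the integral is not absolutely convergent for $p<2$; giving it meaning is precisely what the multiplier framework is for, it cannot be taken as the starting point. The second gap is in the dyadic bookkeeping: an operator-valued Bernstein/Besov multiplier estimate controls $m_t$ through smoothness in $s$, and derivatives of $e^{ist}(1+is)^{-\tau}(is+A)^{-1}$ produce a factor $t$ per derivative and resolvent \emph{squares}, whose norms on the $k$-th annulus are of order $2^{2k\beta}k^{2b}$, not $2^{k\beta}k^{b}$; your ledger $2^{k(\beta-\tau+1/r)}k^b$ ignores both effects, and the truncation at $K(t)\sim\beta^{-1}\log t$ is not how the decay is produced in~\cite{rozendaal,nos} (there $t^nT(t)$ is paired with $R(i\cdot,A)^{n+1}$ acting on interpolation spaces, and non-integer rates come from Theorem~\ref{inter}-type interpolation in $n$, with the logarithmic weight absorbed by the Bernstein functions $\Psi_\varepsilon$ and Karamata's theorem). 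Finally, the factor $\log(t)^{(1+\delta)/r}$ does not come from a Besov embedding in~\cite{chill} (a Hilbert-space paper); in~\cite{nos} it arises from the $(L^p,L^{p'})$ multiplier theorem with its $\varepsilon$-type loss, the very loss that~\cite{deng} and the present paper remove via Proposition~\ref{prop22}. Since you explicitly leave the two central difficulties unresolved, the proposal as it stands is a plausible heuristic but not a proof.
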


\begin{remark} \begin{rm} In particular, if one let $b=0$ in Theorem~\ref{theo4.5}, then for each $t\geq t_0>1$,
\begin{equation}\label{eeq3a}
    \|T(t)(1+A)^{-\tau}\|_{\mathcal{L}(X)}\lesssim t^{1-\frac{\tau-r^{-1}}{\beta}}\log(t)^{\frac{1+\delta}{r}},
\end{equation}
a result that improves the estimate presented in~\eqref{eeq3}.
\end{rm}
  \end{remark}

Recently, Deng, Rozendaal and Veraar~\cite{deng}, by using the boundedness of Fourier multipliers defined in some specific Besov spaces, improved the estimate presented in~\eqref{eeq3a} by removing the logarithm factor $\log(t)^{\frac{1+\delta}{r}}$. 

\begin{theorem}[\texorpdfstring{\cite[Theorem~1.1]{deng}}{deng}]\label{theo13}
\begin{rm}
Let $\beta>0$ and let $(T(t))_{t\geq 0}$ be a $C_0$-semigroup with generator $-A$ defined in a Banach space $X$ with Fourier type $p\in [1,2]$. Suppose that $\overline{\mathbb{C}_{-}}\subset \rho(A)$ and that for each $\lambda \in \mathbb{C}$ with $\text{Re}(\lambda)\le 0$,
\[\|(\lambda+A)^{-1}\|_{\mathcal{L}(X)}\lesssim (1+|\lambda|)^{\beta}.\]
 Let $r\in [1,\infty]$ be such that $\frac{1}{r}=\frac{1}{p}-\frac{1}{p'}$, and let $\tau > 0$ be such that $\tau>\beta+\frac{1}{r}$. Then, there exists a constant $c_{\tau}\ge 1$ such that for each $t\geq 1$,
\begin{equation*}
 \|T(t)(1+A)^{-\tau}\|_{\mathcal{L}(X)}\leq c_{\tau}t^{1-\frac{\tau-r^{-1}}{\beta}}.
\end{equation*}
\end{rm}
\end{theorem}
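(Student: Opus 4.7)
The plan is to extend the Fourier-multiplier strategy of Rozendaal--Veraar (Theorem~\ref{theoem 1.9}) and Santana--Carvalho (Theorem~\ref{theo4.51}), representing $T(t)(1+A)^{-\tau}$ as an operator-valued Fourier integral along the imaginary axis whose symbol is built from the boundary resolvent $(is+A)^{-1}$, and bounding this integral by exploiting the Fourier type $p$ of $X$. The key improvement that removes the residual factor $\log(t)^{(1+\delta)/r}$ appearing in~\eqref{eeq3a} is to substitute the classical Hausdorff--Young-type multiplier estimate by the sharp operator-valued Fourier-multiplier theorem of Deng--Rozendaal--Veraar, formulated on the critical Besov space $B^{1/r}_{r,1}(\mathbb{R};\mathcal{L}(X))$, whose embedding into $L^\infty(\mathbb{R};\mathcal{L}(X))$ is lossless.

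Concretely, I would fix a smooth inhomogeneous dyadic partition of unity $\{\varphi_k\}_{k\ge 0}$ on $\mathbb{R}$ with $\supp\varphi_0$ contained in a neighborhood of the origin and $\supp\varphi_k\subset\{2^{k-1}\le|s|\le 2^{k+1}\}$ for $k\ge 1$, and then write the standard contour-integral representation of $T(t)(1+A)^{-\tau}$ shifted to $i\mathbb{R}$ (admissible by the hypothesis $\overline{\mathbb{C}_{-}}\subset\rho(A)$), thereby decomposing it into a low-frequency piece plus a sum of operator-valued Fourier multipliers with symbols $m_k(s):=\varphi_k(s)(is+A)^{-1}(1+A)^{-\tau}$. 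Each $m_k$ satisfies $\|m_k(s)\|_{\mathcal{L}(X)}\lesssim 2^{k(\beta-\tau)}$, with the expected scaling of all derivatives obtained by iterating $\partial_s(is+A)^{-1}=-i(is+A)^{-2}$. Applying the sharp Besov multiplier theorem on each dyadic block then yields a multiplier norm of order $2^{k(\beta-\tau+1/r)}$; since $\tau>\beta+1/r$ these norms decay geometrically in $k$, and selecting a cutoff scale $k_0(t)\sim(\log_2 t)/\beta$ to balance the high-frequency tail against the exponentially small low-frequency contribution (supplied by the uniform resolvent bound on a narrow strip around $i\mathbb{R}$) produces exactly the bound $t^{1-(\tau-1/r)/\beta}$ with no residual logarithmic factor.

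The principal difficulty, and the reason this refinement is genuinely nontrivial, lies in establishing the sharp Besov-norm estimate for the operator-valued symbols $m_k$ with the correct $k$-dependence. The index $1/r$ in $B^{1/r}_{r,1}$ sits precisely at the endpoint of the embedding into $L^\infty$ (the slightly larger space $B^{1/r}_{r,\infty}$ already fails to embed), so the estimate admits no margin and cannot afford any spurious logarithm; moreover, the operator-valued nature of $m_k$ rules out any pointwise Mikhlin-type condition, forcing one instead to couple the resolvent derivative bounds with the Fourier type of $X$ in an interpolation-free manner, as carried out in~\cite{deng}. Once this endpoint operator-valued multiplier theorem is in place, the geometric summation over $k\ge k_0(t)$ together with the elementary estimate on the low-frequency term combine to prove Theorem~\ref{theo13}.
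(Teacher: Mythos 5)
Your outline has a genuine gap at its core: it contains no mechanism that actually produces decay in $t$. Bounds on the Besov-multiplier norms of the dyadic symbols $m_k$ are bounds on operators between function spaces over $\mathbb{R}$; you never explain how they yield a bound on the fixed-$t$ operator norm of $T(t)(1+A)^{-\tau}$, and summing geometrically decaying multiplier norms over all $k$ would in any case only give a $t$-uniform bound. In the actual argument of Deng--Rozendaal--Veraar (mirrored in this paper by Proposition~\ref{Prop3.1}, Proposition~\ref{prop22}, Theorem~\ref{teo32}, Theorem~\ref{teo31} and Corollary~\ref{RFINAL}), the factor $t^{-n}$ comes from working with the \emph{powers} $R(i\cdot,A)^k$, $k\in\{n-1,n,n+1\}$ (equivalently, $n$ integrations by parts in the Fourier representation), which are shown to be uniformly bounded from suitable fractional domain/real-interpolation spaces into $X$; the Besov multiplier theorem is applied to these uniformly bounded symbols (its advantage over the $(L^p,L^q)$ approach of \cite{rozendaal} is precisely that no decay of the symbol at infinity is needed, which is what removes the $\varepsilon$/logarithmic loss -- not an ``endpoint embedding of $B^{1/r}_{r,1}$ into $L^\infty$''), and non-integer rates are then reached by real interpolation. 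None of these steps -- resolvent powers, the passage from multiplier boundedness to semigroup decay, interpolation -- appears in your sketch. (Note also that Theorem~\ref{theo13} is quoted from \cite{deng}; the present paper does not reprove it but generalizes it along exactly this route in Theorem~\ref{theo4.5}.)

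The substitute you propose, a $t$-dependent frequency cutoff $2^{k_0(t)}\sim t^{1/\beta}$ balanced against an ``exponentially small'' low-frequency piece, fails quantitatively. Under the sole hypothesis $\|R(\lambda,A)\|_{\mathcal{L}(X)}\lesssim(1+|\lambda|)^{\beta}$ on $\overline{\mathbb{C}_-}$, a Neumann-series argument controls the resolvent only in a region of width $\sim(1+|s|)^{-\beta}$ to the left of $i\mathbb{R}$; near the cutoff frequency $2^{k_0}\sim t^{1/\beta}$ the admissible contour shift is therefore only $\sim t^{-1}$, so the exponential gain is $e^{-c\,t\cdot t^{-1}}=e^{-c}$, i.e.\ no smallness at all, while lowering the cutoff to, say, $(t/\log t)^{1/\beta}$ reintroduces exactly the logarithmic loss the theorem is meant to eliminate. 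Two further inaccuracies: the pointwise bound $\|m_k(s)\|_{\mathcal{L}(X)}\lesssim 2^{k(\beta-\tau)}$ cannot hold once $\tau>\beta+1$ (the decay of $\|(is+A)^{-1}(1+A)^{-\tau}\|_{\mathcal{L}(X)}$ saturates at order $|s|^{-1}$, and $\tau>\beta+1/r$ allows such $\tau$); and the relevant multiplier result, Proposition~\ref{prop22}, concerns $T_m:B^{1/p-1/p'}_{p,p}(\mathbb{R};Y)\to L^{p'}(\mathbb{R};X)$ for uniformly bounded $\mathcal{L}(Y,X)$-valued symbols, with $Y$ a fractional domain space of $A$, not a multiplier theorem on $B^{1/r}_{r,1}(\mathbb{R};\mathcal{L}(X))$. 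As it stands, the proposal would at best reproduce a lossy estimate of the type it sets out to improve.
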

By considering this new result, one may ask if the logarithmic factor $\log(t)^{\frac{1+\delta}{r}}$ can also be dropped from~\eqref{eqTh1.1}, that is, if it can also be dropped in case $b>0$. Our main goal in this work is to address this question not only for the result presented in Theorem~\ref{theo4.51}, but also for the case where the growth of the norm of the resolvent is given by a more general regularly varying function (as originally discussed in~\cite{chill} for bounded $C_0$-semigroups). One should compare the estimates~\eqref{eq16} (in case $\ell(t)=\log(2+t)$, with $t\ge 1$) and~\eqref{eqTh1.1}.
\begin{theorem}\label{theo4.5}
\begin{rm}
Let $\beta>0$, $b\geq 0$ and let $(T(t))_{t\geq 0}$ be a $C_0$-semigroup with generator $-A$  defined in a Banach space $X$ with Fourier type $p\in [1,2]$. Suppose that $\overline{\mathbb{C}_{-}}\subset \rho(A)$ and let $r\geq 0$ be such that $\frac{1}{r}=\frac{1}{p}-\frac{1}{p'}$. 
\begin{enumerate}[(i)]
\item Assume that for each $\lambda \in \mathbb{C}$ with $\text{Re}(\lambda)\le 0$,
\[\|(\lambda+A)^{-1}\|_{\mathcal{L}(X)}\lesssim (1+|\lambda|)^{\beta}\ell(1+|\lambda|),\]
where $\ell$ is an increasing and slowly varying function. Then, for each $\tau > 0$ such that $\tau>\beta+\frac{1}{r}$, there exist $c_\tau>0$ and $t_0\geq 1$ so that for each $t\geq t_0$,
 \begin{equation}\label{eq16}
 \|T(t)(1+A)^{-\tau}\|_{\mathcal{L}(X)}\leq c_{\tau}t^{1-\frac{\tau-r^{-1}}{\beta}}(\ell(t^{1/\beta}))^{\frac{\tau-r^{-1}}{\beta}}.
\end{equation}
\item Assume that for each $\lambda \in \mathbb{C}$ with $\text{Re}(\lambda)\le 0$,
\[\|(\lambda+A)^{-1}\|_{\mathcal{L}(X)}\lesssim (1+|\lambda|)^{\beta}(1/\kappa(1+|\lambda|)),\]
where $\kappa$ is an increasing and slowly varying function. Then, for each $\tau > 0$ such that $\tau>\beta+\frac{1}{r}$, there exist $\tilde{c}_\tau>0$ and $t_0\geq 1$ so that for each $t\geq t_0$,
 \begin{equation*}
 \|T(t)(1+A)^{-\tau}\|_{\mathcal{L}(X)}\leq \tilde{c}_{\tau}t\left(\frac{1}{t\kappa(t^{1/\beta})}\right)^{\frac{\tau-r^{-1}}{\beta}}.
\end{equation*}
\end{enumerate}
\end{rm}
\end{theorem}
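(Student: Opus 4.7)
My plan is to extend the Fourier-multiplier-in-Besov-space technique of Deng--Rozendaal--Veraar (Theorem~\ref{theo13}) to the regularly varying setting by replacing the polynomial resolvent growth bound with the product $(1+|\lambda|)^{\beta}\ell(1+|\lambda|)$ and tracking how the slowly varying factor $\ell$ propagates through each step. The improvement over Theorem~\ref{theo4.51} comes precisely from replacing the Sobolev-type multiplier estimates used in~\cite{nos} by the sharper Besov-type estimates of~\cite{deng}, which eliminate the logarithmic loss of order $(1+\delta)/r$ in~\eqref{eqTh1.1}. Once part~(i) is established, part~(ii) should follow by exactly the same argument with $\ell$ replaced by $1/\kappa$; one only needs to verify that $1/\kappa$ is again slowly varying (and monotone, now decreasing), which is immediate from the definition.

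\textbf{Representation and dyadic splitting.} I would start from a Fourier-integral representation of $T(t)(1+A)^{-\tau}$ of the form
\begin{equation*}
T(t)(1+A)^{-\tau}x \;=\; \frac{1}{2\pi}\int_{\mathbb R} e^{its}\,g_\tau(s)\,(is+A)^{-1}x\,ds,
\end{equation*}
valid on a suitable dense subspace of $X$ (after shifting the contour to $i\mathbb R$, which is allowed since $\overline{\mathbb C_-}\subset\rho(A)$ and the resolvent has polynomial-times-slowly-varying growth). Picking a smooth Littlewood--Paley partition of unity $1=\varphi_0(s)+\sum_{k\ge 1}\varphi_k(s)$, with $\varphi_k$ supported in an annulus $\{|s|\simeq 2^k\}$, I split
\begin{equation*}
T(t)(1+A)^{-\tau} \;=\; M_0(t)+\sum_{k\ge 1} M_k(t), \qquad M_k(t) := \mathcal F^{-1}\bigl(\varphi_k\,g_\tau\,(i\,\cdot\,+A)^{-1}\bigr)(t).
\end{equation*}
Each $M_k(t)$ is a vector-valued Fourier multiplier with symbol supported in an annulus of scale $2^k$, and the hypothesis gives
$\|\varphi_k\,g_\tau\,(is+A)^{-1}\|_{\mathcal L(X)}\lesssim 2^{k(\beta-\tau)}\,\ell(1+2^k)$ on its support.

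\textbf{Besov multiplier bound and optimization.} The key input from~\cite{deng} is that vector-valued Fourier multipliers with symbols supported in a single dyadic annulus act on $X$ with operator norm controlled by a Besov-type norm of the symbol, yielding (under Fourier type $p$) an extra factor of order $2^{k/r}$ rather than the $k^{1/r}$-type loss that appears in the Sobolev approach of~\cite{nos}. I would therefore obtain
\begin{equation*}
\|M_k(t)\|_{\mathcal L(X)} \;\lesssim\; 2^{k(\beta-\tau+1/r)}\,\ell(1+2^k) \qquad (k\ge 1),
\end{equation*}
uniformly in $t$, while the low-frequency block $M_0(t)$ satisfies a trivial bound of order $1$. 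Since $\tau>\beta+1/r$, this sum is convergent and gives a bound independent of $t$. To recover the $t$-decay I would truncate the sum at a level $K=K(t)$ and estimate the part $k\ge K$ by the uniform Fourier bound above and the part $k<K$ by the improved, $t$-dependent bound
$\|M_k(t)\|\lesssim t^{-1}\,2^{k(\beta-\tau+1/r+1)}\,\ell(1+2^k)$ obtained by integrating by parts once in $s$ (which costs one derivative on the symbol, supplying the factor $t^{-1}$ and absorbing a $2^{-k}$). Optimizing by choosing $2^{K}\simeq t^{1/\beta}$ (equivalently, balancing the two tails) gives a leading term of order $t^{-(\tau-1/r)/\beta+1}\,\ell(t^{1/\beta})^{(\tau-1/r)/\beta}$, which is exactly~\eqref{eq16}.

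\textbf{Main obstacle.} The delicate point is the bookkeeping for $\ell$: after summing the geometric series $\sum_{k\ge K} 2^{k(\beta-\tau+1/r)}\ell(1+2^k)$ one must show that the sum behaves like $2^{K(\beta-\tau+1/r)}\ell(1+2^K)$, i.e.\ that the slowly varying factor does not destroy the geometric decay. This is precisely where Karamata's theorem and the uniform convergence theorem for slowly varying functions are needed, together with monotonicity of $\ell$ (resp.\ of $\kappa$ in part~(ii)) to keep $\ell(2^{k+1})/\ell(2^k)\to 1$ under control. A second subtle point is the justification of the contour shift and of the integration-by-parts identities on a dense domain, which requires that $g_\tau(s)(is+A)^{-1}$ and its first derivative lie in $L^1(\mathbb R;\mathcal L(X))$ after the $(1+A)^{-\tau}$-smoothing; this is exactly where the assumption $\tau>\beta+1/r$ is used, and the slowly varying factor is harmless here because $\ell$ grows strictly slower than any power.
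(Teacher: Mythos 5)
There is a genuine gap, and it lies at the heart of your scheme: the low-frequency (``improved, $t$-dependent'') bound. Your claim that one integration by parts ``costs one derivative on the symbol, supplying the factor $t^{-1}$ and absorbing a $2^{-k}$'' is false for the resolvent factor: $\partial_s(is+A)^{-1}=-i(is+A)^{-2}$, and under the hypothesis the only available estimate is $\|(is+A)^{-2}\|_{\mathcal{L}(X)}\lesssim\big((1+|s|)^{\beta}\ell(1+|s|)\big)^{2}$, so each derivative costs a factor of order $2^{k\beta}\ell(2^{k})$, not $2^{\pm k}$. Moreover a single integration by parts can never yield decay faster than $t^{-1}$, whereas \eqref{eq16} requires the rate $t^{1-(\tau-r^{-1})/\beta}$, which is arbitrarily fast for large $\tau$; you would need $\sim n$ integrations by parts, each paying $2^{k\beta}\ell(2^{k})$, and then the balance point is no longer $2^{K}\simeq t^{1/\beta}$ but $2^{K\beta}\ell(2^{K})\simeq t$ -- this is precisely the mechanism that produces the exponent $(\tau-r^{-1})/\beta$ on $\ell(t^{1/\beta})$ in \eqref{eq16}. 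Your arithmetic is internally inconsistent on this point: with the bounds as you state them the two tails balance at $2^{K}\simeq t$, and your high-frequency sum alone gives $\ell(t^{1/\beta})^{1}$ rather than $\ell(t^{1/\beta})^{(\tau-r^{-1})/\beta}$, a sign that the propagation of the slowly varying factor through the decay mechanism has been lost. There are also unaddressed foundational issues: $T(t)(1+A)^{-\tau}$ is not given by $\frac{1}{2\pi}\int e^{ist}g_\tau(s)(is+A)^{-1}\,ds$ with a scalar $g_\tau$ (the symbol is the operator composition $(is+A)^{-1}(1+A)^{-\tau}$), and since $(T(t))_{t\ge0}$ may be unbounded, both the contour shift and the bound $\|(is+A)^{-1}(1+A)^{-\tau}\|\lesssim(1+|s|)^{\beta-\tau+\cdots}$ require proof; finally, part (ii) is not obtained by ``replacing $\ell$ by $1/\kappa$'', since $1/\kappa$ is decreasing and the monotonicity enters the Potter-type and Karamata estimates.

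For comparison, the paper does not truncate a dyadic expansion at all. It absorbs the slowly varying factor into the functional calculus of complete Bernstein functions: one forms $\Psi_{\varepsilon}(B^{-1})$ (resp.\ $\Phi(B)$) from the Stieltjes function $S_{h_\varepsilon}$ associated with $\ell$ (resp.\ $S_\kappa$), proves uniform resolvent estimates with these corrections (Proposition~\ref{prop3.1}), converts them via interpolation and the Besov multiplier theorem (Proposition~\ref{prop22}, Proposition~\ref{Prop3.1}, Theorem~\ref{teo32}, Theorem~\ref{teo31}) into the decay estimate $\|T(t)B^{-\tau}\Psi_\varepsilon(B^{-1})\|_{\mathcal{L}(X)}\lesssim t^{-\rho}$ of Corollary~\ref{RFINAL}, and then removes the auxiliary factors by splitting the Stieltjes representation at a threshold defined in terms of the quantity being estimated and invoking Karamata's theorem (Theorem~\ref{lema21}) and the Potter bounds (Proposition~\ref{cor2}); it is this last bootstrapping step, absent from your proposal, that yields the factor $(\ell(t^{1/\beta}))^{(\tau-r^{-1})/\beta}$. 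If you wish to salvage a direct frequency-decomposition proof, you must at minimum work with powers $R(i\cdot,A)^{k}$ of the resolvent (as in Theorem~\ref{teo32}) rather than one integration by parts, and redo the optimization with the correct cost $2^{k\beta}\ell(2^{k})$ per power.
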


The next result is just Theorem~\ref{theo4.5} in case  $X$ is a Hilbert space.

\begin{corollary}\label{theo4.6}
 \begin{rm}
Let $\beta$, $\ell$, $\kappa$, $A$  and $(T(t))_{t\geq 0}$ be as in the statement of Theorem~\ref{theo4.5}, and let $X$ be a Hilbert space. Let $\tau>\beta$. Then,    there exist constants $c_{\tau},\tilde{c}_{\tau}\geq 0$ and $t_0\ge 1$ such that for each $t\geq t_0$,
\begin{enumerate}[(i)]
    \item \qquad\qquad\qquad\qquad\qquad$\|T(t)(1+A)^{-\tau}\|_{\mathcal{L}(X)}\leq c_{\tau}t^{1-\frac{\tau}{\beta}}\ell(t^{1/\beta})^{\frac{\tau}{\beta}}$.
\item 
 \qquad\qquad\qquad\qquad\qquad $\|T(t)(1+A)^{-\tau}\|_{\mathcal{L}(X)}\leq \tilde{c}_{\tau}t\left(\dfrac{1}{t\kappa(t^{1/\beta})}\right)^{\frac{\tau}{\beta}}$.
\end{enumerate}
\end{rm}   
\end{corollary}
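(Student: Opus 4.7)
The plan is to observe that Corollary~\ref{theo4.6} is essentially a direct specialization of Theorem~\ref{theo4.5} to the Hilbert space setting, so the proof will be short. The only genuinely new ingredient is identifying the correct value of the Fourier-type parameter that appears in the exponent of Theorem~\ref{theo4.5}. Since Hilbert spaces have Fourier type $p=2$ (this is the Plancherel theorem, i.e.\ the Fourier transform is an isometry on $L^{2}(\mathbb{R};X)$ when $X$ is Hilbert), we may apply Theorem~\ref{theo4.5} with $p=2$, and consequently with $p'=2$ as well.

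With this choice, $\frac{1}{r}=\frac{1}{p}-\frac{1}{p'}=\frac{1}{2}-\frac{1}{2}=0$. Substituting $\frac{1}{r}=0$ into the hypothesis $\tau>\beta+\frac{1}{r}$ of Theorem~\ref{theo4.5} recovers exactly the hypothesis $\tau>\beta$ of the corollary. Substituting $\frac{1}{r}=0$ into the estimate \eqref{eq16} of part (i) yields
\[
\|T(t)(1+A)^{-\tau}\|_{\mathcal{L}(X)}\leq c_{\tau}\,t^{1-\tau/\beta}\,\ell(t^{1/\beta})^{\tau/\beta},
\]
which is precisely item (i) of the corollary. The same substitution into part (ii) of Theorem~\ref{theo4.5} yields
\[
\|T(t)(1+A)^{-\tau}\|_{\mathcal{L}(X)}\leq \tilde{c}_{\tau}\,t\left(\frac{1}{t\kappa(t^{1/\beta})}\right)^{\tau/\beta},
\]
which is item (ii). The constants $c_{\tau},\tilde{c}_{\tau}$ and the threshold $t_{0}\geq 1$ are inherited directly from Theorem~\ref{theo4.5}.

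Since no further estimation, interpolation, or spectral argument is required beyond invoking Theorem~\ref{theo4.5}, there is no substantive obstacle here; the only point worth recording explicitly in the proof is the identification $p=2$ coming from Plancherel's theorem, which makes the $1/r$ term vanish.
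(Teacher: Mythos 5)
Your proposal is correct and coincides with the paper's own treatment: the paper simply notes that this corollary is Theorem~\ref{theo4.5} in the case where $X$ is a Hilbert space, so that the Fourier type is $p=2$ and hence $\frac{1}{r}=0$, exactly as you argue. No further comment is needed.
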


The text is organized as follows: in Section~\ref{SPre} we present some definitions and preliminary results needed in the proof of Theorem~\ref{theo4.5}, which is presented in details in Section~\ref{three}.

\section{Preliminaries}\label{SPre}
\zerarcounters
In this section we set the notation used throughout the text and present some definitions and auxiliary results needed in the proof of Theorem~\ref{theo4.5}.

\subsection{Notation}

We set $\mathbb{N}_0:=\{0,1,2,3,\ldots\}$, $\mathbb{R}_{+}:=\{x\in \mathbb{R}\mid x>0\}$, $\mathbb{C}_{\pm}:=\{z\in \mathbb{C}\mid \text{Re}(\lambda)\gtrless 0\}$ and for each $\omega\in (0,\pi]$, set $S_{\omega}:= \{z\in \mathbb{C} \setminus\{0\}\mid |\text{arg}(z)|<\omega\}$ and $S_0:=(0,\infty)$. The H\"older conjugate of  $p\in [0,\infty]$ is denoted by $p'$, so that $\frac{1}{p}+\frac{1}{p'}=1$.

Let $\Omega$ be an open connected subset of $\mathbb{C}$; we denote by $H^{\infty}_0(\Omega)$ the set of holomorphic functions $f:\Omega\rightarrow \mathbb{C}$ for which there exist constants $C>0$ and $s>0$ such that for each $z\in\Omega$, $|f(z)|\leq C \min\{|z|^s,|z|^{-s}\}$. 

We denote by $\mathcal{L}(X,Y)$ the (Banach) space of bounded linear operators from $X$ to $Y$ (both $X$ and $Y$ are non-trivial Banach spaces over $\mathbb{C}$), with $\mathcal{L}(X):=\mathcal{L}(X,X)$. 

For a linear operator $A$ defined in $X$, we denote by $D(A)$, $\sigma(A)$ and $\rho(A)$ the domain, the spectrum and the resolvent set of $A$, respectively. We denote by $R(\lambda, A)=(\lambda-A)^{-1}$ 
the resolvent operator of $A$ at $\lambda \in \rho (A)$.

We denote by $\mathcal{S}(\mathbb{R};X)$ and $\mathcal{S}'(\mathbb{R};X)$ the spaces of $X$-valued Schwarz functions and tempered distributions, respectively. 

We say that the Banach space $X$ has \textit{Fourier type} $p\in [1,2]$ if the Fourier transform $\mathcal{F}:L^p(\mathbb{R}; X)\rightarrow L^{p'}(\mathbb{R}; X)$ is bounded; we then set $\mathcal{F}_{p,X}:= \|\mathcal{F}\|_{\mathcal{L}(L^p(\mathbb{R}; X),L^{p'}(\mathbb{R}; X))}$. A Banach space $X$ has \textit{Fourier cotype} $q\in [2, \infty]$ if $X$ has Fourier type $q^{\prime}$. Each Banach space has Fourier type $1$, and $X$ has Fourier type $2$ if, and only if, $X$ is isomorphic to a Hilbert space (see \cite[Theorem 2.1.18]{tuomas}).

We write $|f(z)|\lesssim |g(z)|$ to indicate that there exist $C>0$ and $0\neq z_0 \in \mathbb{C}$ such that for each $|z|\ge |z_0|$, $|f(z)|\leq C |g(z)|$. 
\subsection{Some important classes of functions}

\subsubsection{Complete Bernstein functions}

In this subsection, we recall the definitions and some properties of the complete Bernstein and regularly varying functions of type $\beta$. We refer to~\cite{ber} for details (see also~\cite{chill}).
\begin{definition}
\rm{A function $f\in C^{\infty}(0,\infty)$ is a Bernstein function if there exist
constants $a,b\geq 0$ and a positive Radon measure $\mu$, 
defined over the Borel subsets of $(0,\infty)$, such that for each $\lambda>0$,
\begin{eqnarray*}
f(\lambda)= a+b\lambda+ \int_{0^{+}}^{\infty}(1-e^{-\lambda s}) \text{d}\mu(s), 
\end{eqnarray*}
with
\begin{eqnarray*}
\int_{0^{+}}^{\infty}\frac{s}{s+1}\text{d}\mu(s)<\infty.
\end{eqnarray*}
}
\end{definition}

The triple $(a,b,\mu)$ determines $f$ uniquely and vice-versa (see \cite[Theorem 3.2]{ber}), and it is called the Lévy-Khintchine triple of $f$. Every Bernstein function can also be extended to a holomorphic function in $\mathbb{C}_+$ (this is \cite[Proposition 3.6]{ber}).

Now we consider a subclass of the Bernstein functions, the so-called complete Bernstein functions.

\begin{definition}[\texorpdfstring{\cite[Definition~6.1]{ber}}{ber}]\label{DCBF}
 {\rm A function $f\in C^{\infty}(0,\infty)$ is called
a {\bf{complete Bernstein function}} if it is a Bernstein function and if the measure
$\mu_{LK}$ in the Lévy-Khintchine triple has a completely monotone density with
respect to the Lebesgue measure. The set of all complete Bernstein functions is denoted by $\mathcal{CBF}$.}
\end{definition}

It follows from \cite[Theorem 6.2-(vi)]{ber} that each $f\in\mathcal{CBF}$ admits a representation of the form
\begin{equation}\label{eqq5}
    f(\lambda)=a+b\lambda+ \int_{0^{+}}^{\infty} \frac{\lambda}{\lambda+s}\text{d}\mu(s), \ \ \lambda>0,
\end{equation}
with $a,b\geq 0$ constants and $\mu$ a positive Radon measure defined over the Borel subsets of $(0, \infty)$ that satisfies
\begin{equation*}
  \int_{0^{+}}^{\infty}  \frac{1}{s+1}\text{d}\mu(s)<\infty.
\end{equation*}

The representation $\eqref{eqq5}$  is  unique (that is, the triple $(a,b,\mu)$ is unique) and it is called the Stieltjes
representation for $f$ (see \cite[Chapter 6]{ber} for details). Note that
\begin{eqnarray*}
    a=\lim_{\lambda \rightarrow 0^{+}} f(\lambda) \ \ \text{and} \ \ b=\lim_{\lambda \rightarrow \infty} \frac{f(\lambda)}{\lambda}.
\end{eqnarray*}
It is important to note that the complete Bernstein functions can be extended
holomorphically to $\mathbb{C}\setminus (-\infty,0]$ (see \cite[Theorem~6.2-(v)]{ber}).

The following result will be used in the proof of Proposition~\ref{prop3.1}.
\begin{proposition}[\texorpdfstring{\cite[Proposition~2.4]{chill}}{chill}]\label{prop21}
\rm{Let $f$ be a complete Bernstein function with Stieltjes representation $(0, 0,\mu)$. Let $\lambda \in S_\pi$ and $\varphi=\text{arg}(\lambda)$. Then,
\begin{equation*}
    \cos(\varphi/2)f(|\lambda|)\le |f(\lambda)|\le \sec(\varphi/2) f(|\lambda|).
\end{equation*}

}
 
\end{proposition}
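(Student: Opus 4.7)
The plan is to work directly from the Stieltjes representation
\[
f(\lambda)=\int_{0^{+}}^{\infty}\frac{\lambda}{\lambda+s}\,d\mu(s),
\]
valid throughout $S_\pi$ by analytic continuation. Write $\lambda=re^{i\varphi}$ with $r=|\lambda|$ and, by symmetry under complex conjugation (which leaves $\mu$ and hence $f$ invariant on $(0,\infty)$ and preserves $\cos(\varphi/2)=\cos(|\varphi|/2)$), assume $\varphi\in[0,\pi)$. The first preparatory step is the elementary identity, for $s>0$,
\[
|\lambda+s|^{2}=r^{2}+2rs\cos\varphi+s^{2}=(r+s)^{2}-4rs\sin^{2}(\varphi/2).
\]
Since $4rs\le(r+s)^{2}$ by AM--GM, this yields the two-sided estimate
\[
\cos(\varphi/2)\,(r+s)\le|\lambda+s|\le r+s,
\]
so that
\[
\frac{r}{r+s}\le\Bigl|\frac{\lambda}{\lambda+s}\Bigr|\le \sec(\varphi/2)\,\frac{r}{r+s}.
\]

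The upper bound in Proposition~\ref{prop21} now follows immediately from the triangle inequality for Bochner integrals:
\[
|f(\lambda)|\le\int_{0^{+}}^{\infty}\Bigl|\frac{\lambda}{\lambda+s}\Bigr|\,d\mu(s)\le\sec(\varphi/2)\int_{0^{+}}^{\infty}\frac{r}{r+s}\,d\mu(s)=\sec(\varphi/2)\,f(r).
\]

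The lower bound is the main obstacle, because the triangle inequality cannot bound $|\!\int\!\cdot|$ from below. The key idea is to exploit sectoriality of the integrand. For each $s>0$, the point $\lambda+s=re^{i\varphi}+s$ lies in the closed upper half--plane strictly left of $\lambda$, so $\arg(\lambda+s)\in[0,\varphi]$, and hence
\[
\arg\Bigl(\frac{\lambda}{\lambda+s}\Bigr)=\varphi-\arg(\lambda+s)\in[0,\varphi].
\]
Thus the whole family $\{\lambda/(\lambda+s)\}_{s>0}$ lies in a sector of opening $\varphi<\pi$. I will invoke the standard lemma that if a (Bochner--integrable) complex function $g(s)$ takes values in a sector $\{\arg z\in[\alpha,\beta]\}$ with $\beta-\alpha<\pi$, then
\[
\Bigl|\int g(s)\,d\mu(s)\Bigr|\ge\cos\!\bigl(\tfrac{\beta-\alpha}{2}\bigr)\int|g(s)|\,d\mu(s),
\]
whose proof is immediate after rotating the sector to be bisected by $\mathbb{R}_{+}$ so that $\mathrm{Re}\,(e^{-i(\alpha+\beta)/2}g(s))\ge\cos((\beta-\alpha)/2)|g(s)|$ pointwise, and then integrating and using $|z|\ge\mathrm{Re}\,z$. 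Applied with $\alpha=0$, $\beta=\varphi$, together with the already established pointwise bound $|\lambda/(\lambda+s)|\ge r/(r+s)$, this yields
\[
|f(\lambda)|\ge\cos(\varphi/2)\int_{0^{+}}^{\infty}\Bigl|\frac{\lambda}{\lambda+s}\Bigr|\,d\mu(s)\ge\cos(\varphi/2)\int_{0^{+}}^{\infty}\frac{r}{r+s}\,d\mu(s)=\cos(\varphi/2)\,f(r),
\]
completing the proof. The only subtlety is verifying the argument-localization $\arg(\lambda+s)\in[0,\varphi]$ (trivial geometry of adding a positive real number to a point in the open upper half--plane) and the sector lemma above; everything else is algebra on $|\lambda+s|^{2}$.
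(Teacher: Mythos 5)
Your proof is correct. Note that the paper does not prove this statement at all: it is imported verbatim as Proposition~2.4 of the cited reference~\cite{chill}, and your argument is essentially the standard one used there --- the identity $|\lambda+s|^{2}=(r+s)^{2}-4rs\sin^{2}(\varphi/2)$ giving $\cos(\varphi/2)(r+s)\le|\lambda+s|\le r+s$, the triangle inequality for the upper bound, and a rotation by $e^{-i\varphi/2}$ followed by taking real parts for the lower bound. The only cosmetic difference is that you package the rotation step as a general ``sector lemma''; one can shortcut it by computing directly $\mathrm{Re}\bigl(e^{-i\varphi/2}\lambda/(\lambda+s)\bigr)=r(r+s)\cos(\varphi/2)/|\lambda+s|^{2}\ge\cos(\varphi/2)\,r/(r+s)$ and integrating, but your version is equally valid.
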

\subsubsection{Regularly varying functions}\label{sub222}

\begin{definition}
\begin{rm}
 Let $a\in\mathbb{R}$ and let $\ell:[a,\infty)\rightarrow \mathbb{R}$ be a strictly positive measurable function 
 such that for each $\lambda>0$,
\begin{equation*}
    \lim_{s\rightarrow \infty} \frac{\ell(\lambda s)}{\ell(s)}=1.
\end{equation*}
Then, $\ell$ is said to be \textbf{slowly varying}. 
\end{rm}
\end{definition} 

\begin{definition}\label{reg}
\rm{Let $\alpha \in \mathbb{R}$ and let $f:\mathbb{R}_+\rightarrow \mathbb{R}_+$ be a positive measurable function. Then, $f$ is called \textbf{regularly varying of index $\alpha$} if, for each $\lambda\ge 1$,
\[\lim_{s\to \infty} \frac{f(\lambda s)}{f(s)}=\lambda^\alpha.\]}
\end{definition}

If $\alpha \in \mathbb{R}$ and if $f:\mathbb{R}_+\rightarrow \mathbb{R}_+$ is a regularly varying function of index $\alpha$, then there exist a slowly varying function $\ell:\mathbb{R}_+\rightarrow\mathbb{R}_+$ and $s_0> 0$ such that for each $s\ge s_0$, $f(s)=s^{\alpha}\ell(s)$ (see~\cite{reg}~for more details).
\begin{definition}[\texorpdfstring{\cite[Example~2.14]{chill}}{chill}] \label{sg}
\rm{Let $\ell$ be a slowly varying function on $\mathbb{R}_{+}$ and assume that $g(s)= s^{\beta} \ell(s)$ is increasing. One defines the so-called associated Stieltjes function $S_g:\mathbb{R}_+\rightarrow\mathbb{R}_+$ by the law
\begin{equation*}
    S_{g}(\lambda):=\int_{0^{+}}^{\infty} \frac{s^{\beta}\ell(s)}{(s+\lambda)^2} \text{d}s  
    \end{equation*}
if the integral is finite (see in~\cite{chill} for more details).}
\end{definition}

\begin{example}\label{ex22}
\rm{Let $\ell$ be a slowly varying function on $\mathbb{R}_{+}$ and let $S_g$ be the function presented in Definition~\ref{sg}.  
\begin{enumerate}[(a)]
\item  The function $f:\mathbb{R}_+\rightarrow \mathbb{R}_+$, given by the law $f(\lambda)=S_g\left(\dfrac{1}{\lambda}\right)$, is a complete Bernstein function.
\item It follows from \cite[Proposition 2.4-(a)]{nos} (or \cite[Theorem~2.2]{chill}) that the function $\Psi:\mathbb{R}_+\rightarrow \mathbb{R}_+$, given by the law $\Psi(\lambda)= \dfrac{\lambda}{S_g\left(1/\lambda\right)}$, is a complete Bernstein function. Moreover, it follows from \cite[Proposition~2.4-(c)]{nos} that for each $\varepsilon \in (0,1)$, $\Psi_\varepsilon(\lambda):=\Psi(\lambda^\varepsilon)$ is a complete Bernstein function.
\end{enumerate}
}
\end{example}

The next results will be used in the proofs of Proposition~\ref{prop3.1} and Theorem~\ref{theo4.5}.
\begin{proposition}[\texorpdfstring{\cite [Corollary~2.6-(a)]{chill}}{chill}]\label{cor2}
\begin{rm}
Let $\ell$ be a slowly varying function and let $\gamma>0$. 
\begin{enumerate}[(a)]
    \item Then, there exist positive constants $C, c$ such that for each sufficiently large $s, t$,  with $t\geq s$, 
\begin{equation*}
   c\left(\frac{s}{t}\right)^{\gamma}\leq \frac{\ell(t)}{\ell(s)}\leq C\left(\frac{t}{s}\right)^{\gamma}.
\end{equation*}
\item 
\[\lim_{s\to \infty} s^{\gamma}\ell(s)=\infty \ \ \ \ \text{and} \ \ \ \lim_{s\to \infty} s^{-\gamma}\ell(s)=0\]
\end{enumerate}
\end{rm}
\end{proposition}
\begin{theorem}[Karamata, \texorpdfstring{\cite[Theorem~2.15]{chill}}{chill}] \label{lema21}
\rm{Let $g:\mathbb{R}_+\rightarrow \mathbb{R}_+$ be an increasing function and let $S_g$ be the  associated Stieltjes function (see Definition~\ref{sg}). Let $0<\sigma \le 1$ and let $\ell:\mathbb{R}_+\rightarrow \mathbb{R}_+$ be a slowly varying function. Then, the following statements are equivalent.
 \begin{enumerate}[(i)]
     \item $\displaystyle
{\lim_{s\to \infty} \dfrac{g(s)}{s^{1-\sigma}\ell(s)}}=1$.
     \item $ \displaystyle
{\lim_{\lambda\to \infty} \dfrac{S_g(\lambda)}{\Gamma(\sigma)\Gamma(2-\sigma)\lambda^{-\sigma}\ell (\lambda)}=1} $, where $\Gamma(\sigma)$ stands for the Gamma function at $\sigma>0$.
 \end{enumerate}}  
\end{theorem}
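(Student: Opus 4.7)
The plan is to prove the two implications separately, as they require quite different techniques: the Abelian direction (i)$\Rightarrow$(ii) is a direct computation via a change of variables, while the Tauberian direction (ii)$\Rightarrow$(i) reduces to a classical Tauberian theorem after an integration by parts.

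For (i)$\Rightarrow$(ii), I would substitute $s=\lambda t$ in $S_g(\lambda)=\int_0^\infty g(s)/(s+\lambda)^2\,ds$ to obtain
\[
S_g(\lambda)=\frac{1}{\lambda}\int_0^\infty\frac{g(\lambda t)}{(t+1)^2}\,dt=\lambda^{-\sigma}\int_0^\infty\frac{t^{1-\sigma}}{(t+1)^2}\cdot\frac{g(\lambda t)}{(\lambda t)^{1-\sigma}}\,dt.
\]
The hypothesis $g(s)\sim s^{1-\sigma}\ell(s)$, together with the uniform convergence theorem for slowly varying functions, gives $g(\lambda t)/((\lambda t)^{1-\sigma}\ell(\lambda))\to 1$ locally uniformly in $t\in(0,\infty)$. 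Dominating the tails via the Potter-type inequalities from Proposition~\ref{cor2}, with an auxiliary exponent $\gamma<\min(\sigma,1-\sigma)$ so that $t^{1-\sigma\pm\gamma}/(t+1)^2$ is integrable on $(0,\infty)$, dominated convergence yields
\[
S_g(\lambda)\sim\lambda^{-\sigma}\ell(\lambda)\int_0^\infty\frac{t^{1-\sigma}}{(t+1)^2}\,dt=\lambda^{-\sigma}\ell(\lambda)\,B(2-\sigma,\sigma)=\Gamma(\sigma)\Gamma(2-\sigma)\lambda^{-\sigma}\ell(\lambda),
\]
where the last equality uses $B(\alpha,\beta)=\Gamma(\alpha)\Gamma(\beta)/\Gamma(\alpha+\beta)$ with $\Gamma(2)=1$.

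For (ii)$\Rightarrow$(i), the key step is integration by parts. After an a priori comparison argument (using (ii) to extract a polynomial upper bound $g(s)=o(s)$, so that the boundary term at infinity vanishes), one obtains
\[
S_g(\lambda)=\frac{g(0^+)}{\lambda}+\int_0^\infty\frac{dg(s)}{s+\lambda}.
\]
Since $g$ is increasing, $dg$ is a positive Radon measure and the right-hand integral is its Stieltjes transform in the classical sense. For $0<\sigma<1$, the leading term $g(0^+)/\lambda$ is negligible compared to $\lambda^{-\sigma}\ell(\lambda)$, so (ii) transfers to $\int_0^\infty dg(s)/(s+\lambda)\sim\Gamma(\sigma)\Gamma(2-\sigma)\lambda^{-\sigma}\ell(\lambda)$ as $\lambda\to\infty$. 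The classical Stieltjes-Tauberian theorem, applied with regular variation index $1-\sigma$ to the positive measure $dg$, then yields $g(s)\sim s^{1-\sigma}\ell(s)$; the constants $\Gamma(\sigma)\Gamma(2-\sigma)$ in hypothesis and conclusion cancel consistently with the Abelian direction. The boundary case $\sigma=1$ is handled by a parallel argument using de Haan theory.

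The main obstacle is the Tauberian direction: passing from the asymptotic behavior of an integral transform back to the pointwise asymptotic of $g$ is delicate and in general fails without a Tauberian side condition. Here that condition is supplied precisely by the monotonicity hypothesis on $g$, which ensures that $dg$ is a positive measure and permits the invocation of the Karamata-type Tauberian theorem. A secondary technical point in the Abelian direction is the justification of the interchange of limit and integral, which is controlled by the Potter inequalities from Proposition~\ref{cor2}.
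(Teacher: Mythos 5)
The paper itself offers no proof of this statement: it is imported wholesale as Karamata's theorem, Theorem~2.15 in~\cite{chill}, so there is no internal argument to compare yours against; I can only judge your outline on its own terms. For $0<\sigma<1$ it is correct and is the standard route: in the Abelian direction the substitution $s=\lambda t$, the uniform convergence theorem for slowly varying functions and the Potter bounds of Proposition~\ref{cor2} justify the passage to the limit, and the Beta integral $B(2-\sigma,\sigma)=\Gamma(2-\sigma)\Gamma(\sigma)$ produces exactly the stated constant; in the Tauberian direction, integration by parts (monotonicity gives $S_g(\lambda)\ge g(\lambda)/(2\lambda)$, hence $g(s)=o(s)$ and the boundary term at infinity vanishes) reduces (ii) to an asymptotic for the classical Stieltjes transform of the positive measure $dg$, and the classical Karamata--Stieltjes Tauberian theorem applies precisely because the Tauberian side condition is the monotonicity of $g$; the constants do match, since $\int_0^\infty (s+\lambda)^{-1}\,d(s^{1-\sigma})=\Gamma(2-\sigma)\Gamma(\sigma)\lambda^{-\sigma}$. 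Deferring the hard direction to that textbook theorem is legitimate here, given that the statement is itself a quoted classical result.

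Two points need repair, both at the endpoint $\sigma=1$, which the statement includes. First, your Potter exponent $\gamma<\min(\sigma,1-\sigma)$ is vacuous when $\sigma=1$; what the domination actually requires is only $\gamma<\sigma$ (integrability of $t^{1-\sigma+\gamma}(1+t)^{-2}$ at infinity) together with the automatic condition $\gamma<2-\sigma$ at the origin, so simply take $\gamma<\sigma$. You should also dispose explicitly of the range $\lambda t\le s_0$, where the hypothesis on $g$ gives no information: monotonicity bounds that contribution by $g(s_0)s_0\lambda^{-2}=o(\lambda^{-\sigma}\ell(\lambda))$. Second, in the Tauberian direction at $\sigma=1$ the term $g(0^+)/\lambda$ is in general \emph{not} negligible against $\lambda^{-1}\ell(\lambda)$ (e.g.\ when $\ell$ is bounded), so the clean transfer of (ii) to $\int_0^\infty (s+\lambda)^{-1}dg(s)$ fails as you state it; the endpoint is still handled inside Karamata's framework, using the index-$0$ case of the Laplace/Stieltjes Tauberian theorem for the non-decreasing $g$ together with the elementary bound $\lambda S_g(\lambda)\ge g(0^+)$ to control the boundary term, with a short case distinction according to whether $\ell$ tends to infinity, to a positive constant, or to zero. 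De Haan theory is neither needed nor the appropriate tool here, and invoking it without detail leaves the case $\sigma=1$ unproven; with these repairs the argument is complete.
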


\subsection{Fourier Multipliers and Besov spaces}

\subsubsection{Fourier Multipliers}
Let $X$ and $Y$ be Banach spaces and let $m: \mathbb{R} \rightarrow \mathcal{L}(X,Y)$ be a $X$-strongly measurable map (i.e. the map $\xi\mapsto m(\xi)x$ is a strongly measurable $Y$-valued map for every $x\in X$). One says that $m$ is of {\it{moderate growth at infinity}} if there exist $\beta \geq 0$ and $g\in L^{1}(\mathbb{R})$ such that for each~$\xi\in \mathbb{R}$,
\begin{equation*}
    \frac{1}{(1+|\xi|)^{\beta}}\|m(\xi)\|_{\mathcal{L}(X,Y)}\lesssim g(\xi).
\end{equation*}

For such a measurable $m$, one defines the {\bf{\it{Fourier multiplier operator}}} associated with $m$, $T_m:\mathcal{S}(\mathbb{R};X)\rightarrow \mathcal{S}'(\mathbb{R};Y)$, by the law
\begin{equation*}
   T_m(f):= \mathcal{F}^{-1}(m \cdot \mathcal{F}{f}), \qquad \qquad \forall~f\in \mathcal{S}(\mathbb{R};X);
\end{equation*}
$m$ is called the {\it{symbol}} of $T_m$. For $p\in [1,\infty)$ and $q\in [1, \infty]$, let $\mathcal{M}_{p,q}(\mathbb{R};\mathcal{L}(X,Y))$ denote the set of all $X$-strongly measurable maps $m: \mathbb{R}\rightarrow \mathcal{L}(X,Y)$ of moderate growth such that $T_m\in \mathcal{L}(L^{p}(\mathbb{R};X),L^{q}(\mathbb{R};Y))$,  and set $\|m\|_{\mathcal{M}_{p,q}(\mathbb{R};\mathcal{L}(X,Y))}:=\|T_m\|_{\mathcal{L}(L^{p}(\mathbb{R};X),L^{q}(\mathbb{R};Y))}$.
\subsubsection{Besov spaces}
We suggest~\cite{tuomas, roz} for a more detailed account of Besov spaces.  Let $\psi \in \mathcal{S}(\mathbb{R})$ be such that $\supp \mathcal{F}(\psi) \subset\left[\frac{1}{2},2\right]$, $\mathcal{F}(\psi)\geq0$, and for each $\xi>0$, $\displaystyle{\sum_{k=-\infty}^{\infty}\mathcal{F}(\psi)(2^{-k}\xi)=1}$. 
 
Now, let $(\varphi_k)_{k\geq 0} \subset \mathcal{S}(\mathbb{R})$ (the so-called inhomogeneous Littlewood–Paley  sequence) be such that for each $\xi \in \mathbb{R}$,
\begin{equation*}
\mathcal{F}(\varphi_k)(\xi)=\mathcal{F}(\psi)(2^{-k}|\xi|)\ \ \text{for} \ \ k\geq 1 \ \ \text{and} \ \   \mathcal{F}(\varphi_0)(\xi)=1- \sum_{k=1}^{\infty}\mathcal{F}(\varphi_k)(\xi).
\end{equation*}

Let $(\varphi_k)_{k\geq 0} \subset \mathcal{S}(\mathbb{R})$ be defined as above, let $X$ be a Banach space and let $s\in \mathbb{R}$, $p,q\in [1,\infty]$. The \textbf{inhomogeneous Besov space}  $B^{s}_{p,q}(\mathbb{R}; X)$ is the normed space of all $f\in S'(\mathbb{R};X)$ such that $\varphi_k*f\in L^{p}(\mathbb{R},X)$ for each $k\in \mathbb{N}_0$, 
and such that the norm
 \begin{equation*}
   \|f\|_{B^{s}_{p,q}(\mathbb{R}; X)}:=\left\|\left(2^{sk}\|\varphi_k*f\|_{L^{p}(\mathbb{R};X)}\right)_{k\geq 0}\right\|_{\ell^q}
 \end{equation*}
 is finite.
 
 Actually, $(B^{s}_{p,q}(\mathbb{R}; X),\|\cdot\|_{B^{s}_{p,q}(\mathbb{R}; X)})$ is a Banach space, and the continuous inclusions
  \begin{equation*}
    \mathcal{S}(\mathbb{R};X)\subset B^{s}_{p,q}(\mathbb{R}; X) \subset \mathcal{S}'(\mathbb{R};X)
  \end{equation*}
 hold (see \cite[Proposition~14.4.3]{tuomas}). Moreover, it follows from \cite[Proposition~14.4.18]{tuomas} that for each $1\le p,q\leq \infty$ and each $r,s \in \mathbb{R}$ with $r>s$, 
 \begin{equation}
B^{r}_{p,q}(\mathbb{R}, X)\subset B^{s}_{p,q}(\mathbb{R}, X) \ \ \text{and} \ \ B^{0}_{p,1}(\mathbb{R}, X)\subset L^p( \mathbb{R}, X) \subset B^{0}_{p,\infty}(\mathbb{R}, X).
 \end{equation}

 The next result presents a sufficient condition for $T_m$ to be a bounded operator when defined in a Besov space. This result justifies the use of such spaces in~\cite{deng} and in this work. 
\begin{proposition}[\texorpdfstring{\cite[Proposition~2.2]{deng}}{deng}]\label{prop22}
\rm{Let $X$ and $Y$ be Banach spaces with Fourier type $p\in [1,2]$ and let $m:\mathbb{R}\rightarrow \mathcal{L}(Y,X)$ be an $X$-strongly measurable function, with 
\[\sup_{\xi \in \mathbb{R}} \|m(\xi)\|_{\mathcal{L}(Y,X)}<\infty.\] 
Then, 
\[T_m: \mathcal{B}^{\frac{1}{p}-\frac{1}{p'}}_{p,p}(\mathbb{R},Y)\rightarrow L^{p'}(\mathbb{R},X)\]
is bounded.}
\end{proposition}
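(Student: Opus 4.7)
The plan is to adapt the Besov-space Fourier multiplier technique of Deng-Rozendaal-Veraar (Theorem~\ref{theo13} here) to resolvent bounds carrying a slowly varying correction, tracking the latter through the rescaling with the Karamata / complete Bernstein function tools developed in Section~\ref{SPre}. I outline the strategy for part~(i); part~(ii) follows by the same route with $\ell$ replaced by $1/\kappa$ (which is also slowly varying).

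First, I would realize $T(t)(1+A)^{-\tau}$ as the value of a Fourier multiplier. Since $\overline{\mathbb{C}_{-}}\subset\rho(A)$ and the smoothing by $(1+A)^{-\tau}$ compensates the polynomial growth of the resolvent on $i\mathbb{R}$, one can shift the contour in the Laplace-inversion representation of the semigroup onto the imaginary axis. After pairing with a time cutoff $\eta\in\mathcal{S}(\mathbb{R})$ with $\eta(t)=1$ and $\widehat{\eta}$ of compact support, this yields a representation of $T(t)(1+A)^{-\tau}$ as a Fourier multiplier $T_{m_{t}}$ with symbol (schematically)
\[m_{t}(\xi)\;=\;\widehat{\eta}(\xi)\,(i\xi+A)^{-1}(1+A)^{-\tau+1},\]
whose norm, by the resolvent hypothesis, satisfies
\[\|m_{t}(\xi)\|_{\mathcal{L}(X)}\;\lesssim\;|\widehat{\eta}(\xi)|\,(1+|\xi|)^{\beta}\,\ell(1+|\xi|).\]

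Second, I would apply Proposition~\ref{prop22} to bound $T_{m_{t}}\colon B_{p,p}^{1/r}(\mathbb{R};X)\to L^{p'}(\mathbb{R};X)$ by $\sup_{\xi}\|m_{t}(\xi)\|_{\mathcal{L}(X)}$. Pairing with a test input of the form $\varphi\otimes x$ for a fixed $\varphi\in\mathcal{S}(\mathbb{R})$ with $\|\varphi\|_{B_{p,p}^{1/r}}\lesssim 1$, and then recovering the pointwise value at $s=t$ via the Besov embedding, converts the multiplier bound into an operator-norm estimate for $T(t)(1+A)^{-\tau}$. This is the decisive step that removes the logarithm, since the corresponding $L^{p}\to L^{p'}$ argument used in~\cite{nos} produces the additional factor $\log(t)^{1/r+\delta/r}$.

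Third, I would introduce a rescaling $\xi=\sigma\zeta$ with $\sigma=\sigma(t)$ to be optimized. The change of variables extracts an explicit factor of $t$ from the cutoff $\widehat{\eta}$ together with a factor $\sigma^{1/r-\tau}$ from the Besov-multiplier bound. Karamata's theorem (Theorem~\ref{lema21}) evaluates the residual integral $\int_{0}^{\sigma}s^{\beta-\tau+1/r}\ell(s)\,ds$ as $c\,\sigma^{\beta-\tau+1+1/r}\ell(\sigma)$ (convergence being guaranteed by $\tau>\beta+1/r$), and balancing the resulting factors of $t$ and $\sigma^{-\beta}$ leads to the natural scale $\sigma\sim t^{1/\beta}$.

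The main obstacle is the precise bookkeeping of the slowly varying factor at this last step: the direct choice $\sigma=t^{1/\beta}$ produces only $\ell(t^{1/\beta})^{1}$, whereas the target is $\ell(t^{1/\beta})^{(\tau-1/r)/\beta}$. To obtain the correct exponent, I would instead rescale using $\sigma=\Psi^{-1}(t)$, where $\Psi$ is the complete Bernstein function from Example~\ref{ex22}-(b). By Karamata's theorem $\Psi(\lambda)\sim c_{\beta}\lambda^{\beta}\ell(\lambda)$, so $\Psi^{-1}(t)\sim(t/\ell(t^{1/\beta}))^{1/\beta}$; substituting this choice, invoking Proposition~\ref{prop21} to replace $|\xi|^{\beta}\ell(|\xi|)$ by $|\Psi(i\xi)|$ on the imaginary axis up to a universal constant, and using the Potter bounds of Proposition~\ref{cor2} to absorb the error factors $\ell(\sigma s)/\ell(s)$, produces the claimed exponent $(\tau-1/r)/\beta$. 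Part~(ii) is handled identically with $\ell$ replaced by $1/\kappa$ throughout and $\Psi$ replaced by the analogous complete Bernstein function built from $g(s)=s^{\beta}/\kappa(s)$.
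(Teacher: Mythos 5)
Your proposal does not prove the statement it was supposed to prove. The statement is Proposition~\ref{prop22} itself: for Banach spaces $X,Y$ of Fourier type $p$ and a symbol $m$ that is merely uniformly bounded, the multiplier $T_m$ maps $B^{1/p-1/p'}_{p,p}(\mathbb{R};Y)$ boundedly into $L^{p'}(\mathbb{R};X)$. Your outline never addresses this; on the contrary, your second step explicitly \emph{invokes} Proposition~\ref{prop22} as a known tool, so as a proof of that proposition the argument is circular. What you have actually sketched is a strategy for the paper's main decay estimate (Theorem~\ref{theo4.5}) --- Laplace-inversion on the imaginary axis, a rescaling $\xi=\sigma\zeta$, Karamata's theorem (Theorem~\ref{lema21}) and Potter bounds (Proposition~\ref{cor2}) --- which is a different statement altogether, and one the paper reaches by a different route (Propositions~\ref{prop3.1} and \ref{Prop3.1}, Theorems~\ref{teo32}--\ref{teo31}, and a complete Bernstein functional-calculus bootstrap) rather than by the direct rescaling you describe. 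Note also that the paper offers no proof of Proposition~\ref{prop22}: it is quoted verbatim from Deng, Rozendaal and Veraar~\cite{deng}.

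If you want to supply a genuine proof of Proposition~\ref{prop22}, the ingredients are entirely different from those in your outline: decompose $f=\sum_{k\ge 0}\varphi_k*f$ with the inhomogeneous Littlewood--Paley sequence, apply the Fourier type of $X$ to estimate $\|\mathcal{F}^{-1}(m\,\mathcal{F}(\varphi_k*f))\|_{L^{p'}(\mathbb{R};X)}$ by $\|m\,\mathcal{F}(\varphi_k*f)\|_{L^{p}(\mathbb{R};X)}$, pull out $\sup_\xi\|m(\xi)\|_{\mathcal{L}(Y,X)}$, use H\"older on the dyadic frequency support of $\mathcal{F}(\varphi_k*f)$ (length $\sim 2^{k}$) to trade $L^{p}$ for $L^{p'}$ at the cost of a factor $2^{k(1/p-1/p')}$, apply the Fourier type of $Y$ to return to $\|\varphi_k*f\|_{L^{p}(\mathbb{R};Y)}$, and finally sum in $k$, exploiting the bounded overlap of the frequency supports to land on the $\ell^{p}$ (rather than $\ell^{1}$) Besov index and hence on $\|f\|_{B^{1/p-1/p'}_{p,p}(\mathbb{R};Y)}$. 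None of these steps appears in your proposal.
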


\subsection{Sectorial operators and Interpolation spaces}

\subsubsection{Sectorial operators}

For the required background on sectoral operators, we refer
to \cite{haase}. 

For each $\omega\in (0,\pi]$, set $S_{\omega}:= \{z\in \mathbb{C} \setminus\{0\}\mid |\text{arg}(z)|<\omega\}$; set also $S_0:=(0,\infty)$.

\begin{definition}
  \rm{A linear operator $A:D(A)\subset X\rightarrow X$ is called {\bf{sectorial}} of angle $\omega$ if $\sigma(A)\subset \overline{S_{\omega}}$ and if $M(A, \omega):=\sup\{\|\lambda R(\lambda,A)\|_{\mathcal{L}(X)} \mid \lambda \in \mathbb{C}\setminus \overline{S_{\omega'}}, \  \omega'\in (\omega, \pi)\}<\infty$. One denotes the set of such operators by $\text{Sect}_X(\omega)$.}
\end{definition}  

Set $\omega_A:=\min\{\omega\in (0,\pi)\mid A\in \text{Sect}_X(\omega)\}$, which is the minimal angle for which $A$ is sectorial.  
\begin{remark}
\begin{rm}
Let $A:D(A)\subset X\rightarrow X$ be a linear operator for which $(-\infty,0) \subset \rho(A)$ and
\begin{equation*}
    M_A:= M(A,\pi)=\sup_{t>0}t\|(t+A)^{-1}\|_{\mathcal{L}(X)}<\infty;
\end{equation*}
then, it follows that $A\in \text{Sect}_X(\pi-\arcsin{\left(1/M_A\right))}$. 
    \end{rm}
\end{remark}

Now we recall some basic properties of the functional calculus of sectorial
operators based on complete Bernstein functions. We use \cite{chill} as a reference for our discussion (see also~\cite{gomilko, batty1, batty33, tomilov1}).


\begin{definition}[\texorpdfstring{\cite[Definition 3.3]{chill}}{chill}]\label{Dchill}
\begin{rm}
Let $A\in\text{Sect}_X(\omega_A)$ be densely defined and let $f\in\mathcal{CBF}$, with Stieltjes representation $(a,b,\mu)$. One defines the linear operator $f_0(A):D(A)\rightarrow X$  by the law
\begin{equation}\label{cbf}
f_0(A)x=ax+bAx+\int_{0+}^{\infty} A(A+\lambda)^{-1}x \text{d}\mu(\lambda), \ \ \ x\in D(A).
\end{equation}
Set $f(A):= \overline{f_0(A)}$. We call the linear operator $f(A)$ a complete Bernstein function of $A$. 
\end{rm}
\end{definition}

\begin{theorem}[\texorpdfstring{\cite[Theorem~3.6]{chill}}{chill}]\label{The2.3}
\begin{rm}
 Let $A$ be a sectorial operator in a Banach space $X$ and let
$f\in\mathcal{CBF}$. Then, $f(A)$ is sectorial.
\end{rm}
\end{theorem}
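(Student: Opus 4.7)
To establish sectoriality of $f(A)$, I need to verify (i) that $\sigma(f(A)) \subset \overline{S_\omega}$ for some $\omega \in (0, \pi)$ and (ii) that $\sup\{\|\lambda R(\lambda, f(A))\|_{\mathcal{L}(X)} : \lambda \notin \overline{S_{\omega'}}\} < \infty$ for every $\omega' \in (\omega, \pi)$. My plan proceeds in three steps.

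First, I would show that $f$ itself preserves every open sector. Starting from the Stieltjes representation $f(\lambda) = a + b\lambda + \int_{0^+}^{\infty} \frac{\lambda}{\lambda+s}\, d\mu(s)$ in \eqref{eqq5}, a direct computation gives $\mathrm{Im}(\lambda/(\lambda+s)) = s\,\mathrm{Im}(\lambda)/|\lambda+s|^2$, so each term is a Pick function that sends the open upper half-plane into itself and $(0,\infty)$ into $[0,\infty)$. Summing and integrating against the positive measure $\mu$, $f$ itself is a Pick function. Combined with the two-sided estimate $\cos(\varphi/2) f(|\lambda|) \le |f(\lambda)| \le \sec(\varphi/2) f(|\lambda|)$ from Proposition~\ref{prop21}, this yields the sector-preserving property
\[
f(S_\omega) \subset S_\omega \quad \text{for every } \omega \in (0, \pi).
\]

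Second, taking $\omega \in (\omega_A, \pi)$ as the candidate angle for $f(A)$, I would construct a candidate for the resolvent by a Dunford-type contour integral:
\[
R_\lambda := \frac{1}{2\pi i}\int_\Gamma \frac{R(z, A)}{\lambda - f(z)}\, dz, \qquad \lambda \notin \overline{S_\omega},
\]
where $\Gamma$ is the positively-oriented boundary of $S_{\omega''}$ for a chosen $\omega'' \in (\omega_A, \omega)$. Because $f(\Gamma) \subset \overline{S_\omega}$ by Step~1 and $\lambda$ lies outside that closed sector, the integrand is well-defined and the denominator is uniformly bounded away from $0$ in terms of $\mathrm{dist}(\lambda, \overline{S_\omega})$. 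The sectorial estimate $\|R(z, A)\|_{\mathcal{L}(X)} \le M(A, \omega'')/|z|$ together with the at-most-linear growth of $|f(z)|$ at infinity and its boundedness near $0$ (both via Proposition~\ref{prop21}) yields absolute convergence of the integral.

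Third, I would verify that $R_\lambda$ coincides with $(\lambda - f(A))^{-1}$ and extract the norm bound. To check $R_\lambda(\lambda - f_0(A))x = x$ on the core $\mathcal{D}(A)$, I would substitute the Stieltjes formula for $f_0(A)$ from Definition~\ref{Dchill} into the product, apply the resolvent identity for $A$, and use Fubini to interchange the $z$-integral along $\Gamma$ with the $\mu$-integral, recovering the identity by Cauchy's theorem. For the norm bound, I would split $\Gamma$ into the two regions $\{|f(z)| \le |\lambda|/2\}$ and $\{|f(z)| \ge |\lambda|/2\}$; on the first, $|\lambda - f(z)| \gtrsim |\lambda|$ and the integral $\int |z|^{-1}\,|dz|$ is controlled over a bounded radial range, while on the second, $|\lambda - f(z)| \gtrsim |f(z)|$ and the two-sided bound from Proposition~\ref{prop21} converts the tail into a convergent radial integral independent of $\lambda$. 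Together these deliver $\|\lambda R_\lambda\|_{\mathcal{L}(X)} \le M$ uniformly in $\lambda \notin \overline{S_{\omega'}}$.

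The main obstacle will be the Fubini/interchange argument in the resolvent identity: one must ensure joint integrability against $dz$ on $\Gamma$ and $d\mu(s)$ on $(0, \infty)$, and must correctly reassemble the contour residues to recover $f_0(A)$ term by term; additional care is needed to pass from the core $\mathcal{D}(A)$ to the closure $f(A) = \overline{f_0(A)}$. Once the contour integral is shown to represent $(\lambda - f(A))^{-1}$, the sectoriality of $f(A)$ at angle $\omega_A$ follows directly from the estimates in Step~3.
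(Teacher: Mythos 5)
First, note that the paper does not prove Theorem~\ref{The2.3} at all: it is quoted from \cite{chill} (Theorem~3.6 there), so there is no internal proof to compare against, and your argument has to stand on its own. Your Step~1 is correct and standard: from the Stieltjes representation \eqref{eqq5} each term $\lambda/(\lambda+s)$ has argument between $0$ and $\arg\lambda$, so $|\arg f(\lambda)|\le|\arg\lambda|$ and $f$ maps each sector into its closure. The genuine gap is in Step~2: the Dunford integral $R_\lambda=\frac{1}{2\pi i}\int_\Gamma\frac{R(z,A)}{\lambda-f(z)}\,dz$ is \emph{not} absolutely convergent in general, contrary to your claim. A sectorial operator need not be invertible, so near $z=0$ the only available bound is $\|R(z,A)\|_{\mathcal{L}(X)}\le M/|z|$, while $|\lambda-f(z)|$ tends to the nonzero constant $|\lambda-a|$; the integrand is thus of exact order $1/|z|$ and $\int_0 r^{-1}\,dr$ diverges. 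At infinity the situation can be just as bad: a complete Bernstein function may be bounded (e.g. $f(z)=z/(1+z)$) or grow arbitrarily slowly (e.g. $f(z)=\log(1+z)$), so the integrand decays no faster than $1/|z|$ or $1/(|z|\log|z|)$ and the integral diverges again. The same defect reappears in your Step~3 estimate, since the region $\{|f(z)|\le|\lambda|/2\}$ always contains a neighbourhood of $0$ on $\Gamma$ (and may be all of $\Gamma$), so $\int|z|^{-1}\,|dz|$ over it is infinite. The function $z\mapsto(\lambda-f(z))^{-1}$ is merely bounded on the sector, not in $H^{\infty}_0(S_{\omega''})$, so the elementary contour calculus does not apply; you would have to regularise (subtract the limit values at $0$ and $\infty$ against $(1+z)^{-1}$ and $z(1+z)^{-1}$, or invoke an extended sectorial calculus), and even then you would still need to prove that the operator so obtained is the resolvent of $f(A)=\overline{f_0(A)}$ as defined through \eqref{cbf}; that compatibility is part of the content of the theorem, not a formality.

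A route that avoids these problems, and is essentially the classical Hirsch-type argument behind the cited result, uses that for $t>0$ the function $z\mapsto 1/(t+f(z))$ is a Stieltjes function, so that $1/(t+f(z))=\alpha_t+\int_{0^+}^{\infty}(z+s)^{-1}\,d\nu_t(s)$ with $\alpha_t\le 1/t$ and $\int_{0^+}^{\infty}s^{-1}\,d\nu_t(s)\le 1/(t+a)$. Substituting $A$ gives an \emph{absolutely} convergent operator integral $R_t:=\alpha_t+\int_{0^+}^{\infty}(s+A)^{-1}\,d\nu_t(s)$ with $\|R_t\|_{\mathcal{L}(X)}\le(1+M)/t$, where $M:=\sup_{s>0}s\|(s+A)^{-1}\|_{\mathcal{L}(X)}$; one then checks on $\mathcal{D}(A)$, via \eqref{cbf}, the resolvent identity and Fubini, that $R_t(t+f_0(A))x=x$ and that $R_t$ maps $X$ into $\mathcal{D}(f(A))$ with $(t+f(A))R_t=I$. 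This yields $(-\infty,0)\subset\rho(f(A))$ and $\sup_{t>0}t\|(t+f(A))^{-1}\|_{\mathcal{L}(X)}<\infty$, whence sectoriality by the Remark following the definition of sectorial operators in Section~2; repeating the estimate for $\lambda$ outside $\overline{S_{\omega_A}}$, using your Step~1 and Proposition~\ref{prop21}, recovers the sharper angle bound. Your overall strategy (sector preservation plus construction and estimation of the resolvent) has the right shape, but as written the central object of Steps~2--3 is not well defined.
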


\begin{proposition}[Relation~(3.12) in~\cite{chill}]\label{fsec}
    \rm{ Let $A$ be an invertible sectorial operator in a Banach space $X$ and let $f\in\mathcal{CBF}$. Then, for each $x\in X\setminus\{0\}$,
    \begin{equation*}
        \|f(A^{-1})x\|_X\le \|x\|f\left(\frac{\|A^{-1}x\|_X}{\|x\|_X}\right).
    \end{equation*}
    }
\end{proposition}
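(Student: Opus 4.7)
The plan is to use the Stieltjes representation of $f$ to expand $f(A^{-1})x$ as an integral of resolvents, then match the resulting triangle-inequality bound against a parallel expansion of $\|x\|f(\|A^{-1}x\|/\|x\|)$, thereby reducing the proposition to a single pointwise inequality for the resolvent.

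First, I would invoke the Stieltjes representation $f(\lambda)=a+b\lambda+\int_{0^+}^\infty \frac{\lambda}{\lambda+s}\,d\mu(s)$ of $f\in\mathcal{CBF}$ and apply Definition~\ref{Dchill} to the bounded operator $A^{-1}$ (which is a sectorial operator on all of $X$), yielding
\[
f(A^{-1})x \;=\; ax + bA^{-1}x + \int_{0^+}^\infty A^{-1}(A^{-1}+s)^{-1}x\,d\mu(s).
\]
Because $A$ is invertible, one has $A^{-1}+s=A^{-1}(I+sA)$, and hence the algebraic identity $A^{-1}(A^{-1}+s)^{-1}=(I+sA)^{-1}$, which simplifies the integrand. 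Taking norms and using the triangle inequality for Bochner integrals gives
\[
\|f(A^{-1})x\| \;\le\; a\|x\| + b\|A^{-1}x\| + \int_{0^+}^\infty \|(I+sA)^{-1}x\|\,d\mu(s).
\]

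Second, I would substitute $t:=\|A^{-1}x\|/\|x\|$ into the Stieltjes representation of $f$ itself to write
\[
\|x\|\,f(t) \;=\; a\|x\| + b\|A^{-1}x\| + \int_{0^+}^\infty \frac{\|A^{-1}x\|\,\|x\|}{\|A^{-1}x\|+s\|x\|}\,d\mu(s),
\]
where I used that $\tfrac{t}{t+s}\,\|x\|=\tfrac{\|A^{-1}x\|\,\|x\|}{\|A^{-1}x\|+s\|x\|}$. Matching the two expansions term by term, the proposition reduces to the single pointwise resolvent estimate
\begin{equation*}
\|(I+sA)^{-1}x\| \;\le\; \frac{\|A^{-1}x\|\,\|x\|}{\|A^{-1}x\|+s\|x\|},\qquad s>0,\ x\in X\setminus\{0\}.
\end{equation*}

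The third and main step is to establish this resolvent estimate, which is the delicate part of the argument. I would set $y:=(I+sA)^{-1}x$, so that the two companion identities $x=y+sAy$ and $A^{-1}x=A^{-1}y+sy$ hold (the second obtained by applying $A^{-1}$ to the first, which is legitimate since $A^{-1}$ is bounded). The target inequality is equivalent to $\|y\|(\|A^{-1}x\|+s\|x\|)\le \|A^{-1}x\|\,\|x\|$, i.e., to bounding $\|y\|$ by the harmonic-mean-type quantity on the right. The strategy would be to combine these identities with the sectoriality of $A$, using the resolvent bound $M_A$ and, if needed, Proposition~\ref{prop21} together with a support-functional choice from $X^*$ (Hahn-Banach), to control $\|y\|$ simultaneously in terms of $\|x\|$ and $\|A^{-1}x\|$. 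This is exactly the statement of the proposition in the special case $f(\lambda)=\lambda/(\lambda+s)$, so this resolvent step is the whole content of the result; once it is in hand, the general case follows from the Stieltjes integration already outlined. I expect this pointwise estimate to be the principal obstacle, particularly because a naive triangle-inequality expansion of the two identities above yields only one-sided bounds, and extracting the tight ``interpolation'' bound between the regimes $s\|x\|\ll\|A^{-1}x\|$ and $s\|x\|\gg\|A^{-1}x\|$ requires genuinely exploiting the sectorial structure of $A$ rather than its mere boundedness.
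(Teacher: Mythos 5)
Your Steps 1 and 2 are fine: the Stieltjes expansion of $f(A^{-1})x$, the identity $A^{-1}(A^{-1}+s)^{-1}=(I+sA)^{-1}$, and the rewriting of $\|x\|f\bigl(\|A^{-1}x\|/\|x\|\bigr)$ as the same integral with scalar integrand are exactly the right skeleton (the paper itself gives no proof — it quotes relation (3.11) of \cite{chill} — and that proof has the same skeleton). The problem is Step 3, which you correctly identify as the whole content but leave unproved, and which in the form you state it cannot be proved: the pointwise bound $\|(I+sA)^{-1}x\|\le \frac{\|A^{-1}x\|\,\|x\|}{\|A^{-1}x\|+s\|x\|}$ has right-hand side at most $\|x\|$, so it forces $\|(I+sA)^{-1}\|\le 1$ for all $s>0$. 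A general invertible sectorial operator on a Banach space does not satisfy this; e.g.\ a $2\times 2$ matrix $A$ with spectrum $\{1\}$ and a large nilpotent part is sectorial and invertible but has $\|(I+sA)^{-1}\|>1$ for suitable $s$, so your target inequality (and, for $f(\lambda)=\lambda/(\lambda+s)$, the proposition with constant exactly $1$) fails. No choice of support functional or appeal to Proposition~\ref{prop21} can repair this, because the obstruction is quantitative: sectoriality only gives $\sup_{s>0}\|s(s+A)^{-1}\|=M_A$, and $M_A$ may exceed $1$.

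The correct completion — and what relation (3.11) in \cite{chill} actually asserts — carries a constant depending on $M_A$. From sectoriality one gets the two one-sided bounds $\|(I+sA)^{-1}x\|\le M_A\|x\|$ and $\|(I+sA)^{-1}x\|=\frac1s\bigl\|A(\tfrac1s+A)^{-1}A^{-1}x\bigr\|\le \frac{1+M_A}{s}\|A^{-1}x\|$, hence $\|(I+sA)^{-1}x\|\le (1+M_A)\min\bigl\{\|x\|,\tfrac1s\|A^{-1}x\|\bigr\}\le 2(1+M_A)\frac{\|A^{-1}x\|\,\|x\|}{\|A^{-1}x\|+s\|x\|}$, and integrating against $d\mu$ in your Step 1 yields $\|f(A^{-1})x\|\le c(M_A)\,\|x\|\,f\bigl(\|A^{-1}x\|/\|x\|\bigr)$. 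That constant is harmless for every use in this paper (the proposition is only invoked through $\lesssim$, e.g.\ in \eqref{eq35}), but your proposal as written aims at the constant-free inequality via a pointwise estimate that is false, so the key step is a genuine gap rather than a routine verification.
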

\subsubsection{Interpolation spaces}

For more details on real interpolation Banach spaces we suggest~\cite{haase, tou, Ro}. An interpolation couple is a pair $(X,Y)$ of Banach spaces which are continuously embedded in a Hausdorff topological vector space $Z$. The real interpolation space of $(X,Y)$ with parameters $\theta \in [0,1]$ and $q\in [1,\infty]$ is denoted by $(X,Y)_{\theta,q}$.  

 Let $A \in \text{Sect}_X(\omega)$, $\tau \in (0,\infty)$ and $1\leq q \leq \infty$. Then, the real interpolation space associated with $A$, $\tau$ and $q$ is given by
 \begin{equation*}
      D_A(\tau,q):= (X,D(A^{\alpha}))_{\tau/\alpha,q},
 \end{equation*}
  where $\alpha \in (\tau,\infty)$ is arbitrary (that is, this definition does not depend on the choice of $\alpha>\tau$). It follows from \cite[Corollary 6.6.3]{haase} that for each $\tau>0$,
  \begin{equation}\label{inter}
      D_A(\tau,1)\subset D(A^{\tau})\subset D_A(\tau, \infty).
  \end{equation}
Moreover, $D(A^\alpha)$ is a dense subset of $D_A(\tau,q)$ for each $\alpha >\tau $ and each $q<\infty$ (see \cite[Theorem 6.6.1]{haase}).

The following theorem plays an important role in the proofs of some results presented in the next section.
\begin{theorem}[\texorpdfstring{\cite[Theorem C.3.3]{tou}}{tou}]\label{inter1}
\rm{Let $(X_0,X_1)$ and
 $(Y_0,Y_1)$ be interpolation couples. Suppose that $T: X_0+X_1\rightarrow Y_0+Y_1$ is a linear  operator which maps $X_0$ into $Y_0$ and $X_1$ into $Y_1$ with norms $A_0:=\|T\|_{\mathcal{L}(X_0,Y_0)}$ and $A_1:=\|T\|_{\mathcal{L}(X_1,Y_1)}$. Then, for each $\theta \in (0,1)$ and each $p\in [1,\infty]$, the operator $T$ maps $(X_0,X_1)_{\theta,p}$ into  $(Y_0,Y_1)_{\theta,p}$,  and one has
 \begin{equation*}
     \|T\|_{\mathcal{L}((X_0,X_1)_{\theta,p},(Y_0,Y_1)_{\theta,p})} \le A^{1-\theta}_0A^{\theta}_1.
 \end{equation*}
 }
    
\end{theorem}

\section{Main Result}\label{three}
\zerarcounters

We let $\beta>0$ throughout this section, whereas $\ell,\kappa:\mathbb{R}_+\rightarrow\mathbb{R}_+$ stand for increasing slowly varying functions. We define, for each $\varepsilon \in (0,1)$, $\Psi_{\varepsilon}:\mathbb{R}_+\rightarrow \mathbb{R}_{+}$ by the law $\Psi_{\varepsilon}(s):=\dfrac{s^{\varepsilon}}{S_{h_{\varepsilon}}(1/s^\varepsilon)}$, where $S_{h_{\varepsilon}}$ stands for the associated Stieltjes function to $h_{\varepsilon}(s):= \ell (s^{1/\varepsilon})$ (see Definition~\ref{sg}); we also define $\Phi: \mathbb{R_{+}}\rightarrow \mathbb{R}_{+}$ by the law $\Phi(t)=t S_{\kappa}(t)$. Finally, for each $A\in \text{Sect}_X(\omega_A)$, set $B:= 1+A$.

In what follows, we use Proposition~\ref{prop3.1} (which is a version of \cite[Proposition~3.3]{nos} and \cite[Proposition~3.4]{rozendaal} for our setting) to prove Proposition~\ref{Prop3.1}. Then, we combine Proposition~\ref{Prop3.1} with Theorem~\ref{teo32} to prove Theorem~\ref{teo31}. Finally, Theorem~\ref{theo4.5} follows, among other results, from Theorem~\ref{teo31}. 

\begin{proposition}\label{prop3.1}
\begin{rm}
Let $A\in \text{Sect}_X(\omega_A)$ be such that $\overline{\mathbb{C}_{-}}\subset \rho(A)$, and let $a\geq 0$. 
\begin{enumerate}[(1)]
\item If for each $\xi\in \mathbb{R}$, 
\begin{equation*}\label{eeq23}
    \|(i\xi+A)^{-1}\|_{\mathcal{L}(X)}\lesssim (1+|\xi|)^{\beta} \ell(1+|\xi|),
\end{equation*}
then for each $\varepsilon>0$, the family
\begin{equation}\label{eeq24}
\{\ell(|\lambda|)^{a}\|(\lambda+A)^{-1}(1+A)^{-\beta}\Psi_{\varepsilon}(B^{-1})^{1+a}\|_{\mathcal{L}(X)}\mid \lambda \in i\mathbb{R}, |\lambda|\geq 1\} 
\end{equation}
is uniformly bounded.

\item If for each $\xi\in \mathbb{R}$,
\begin{equation*}
    \|(i\xi+A)^{-1}\|_{\mathcal{L}(X)}\lesssim (1+|\xi|)^{\beta}(1/ \kappa(1+|\xi|)),
\end{equation*}
then the family
\begin{equation*}
\{\kappa(|\lambda|)^{- a}\|(\lambda+A)^{-1}(1+A)^{-\beta}\Phi(B)^{1+a}\|_{\mathcal{L}(X)}\mid \lambda \in i\mathbb{R}, |\lambda|\geq 1\} 
\end{equation*}
is uniformly bounded.
\end{enumerate}
\end{rm}
\end{proposition}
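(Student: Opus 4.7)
The plan is to follow the strategy of Proposition~3.3 in~\cite{nos} and Proposition~3.4 in~\cite{rozendaal}, adapted to the regularly varying setting where $\ell$ (respectively $1/\kappa$) replaces logarithmic/polynomial growth. The two parts are structurally parallel, so I will focus on part~(1); part~(2) will then follow by substituting $\Phi$ for $\Psi_\varepsilon$ and $1/\kappa$ for $\ell$, using the same machinery.

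First, I will verify that the operators involved are well-defined. Since $\overline{\mathbb{C}_{-}}\subset \rho(A)$, $A$ is invertible sectorial, and with the standard choice $B=(1+A)^{-1}$ used in~\cite{nos,rozendaal} the operator $B$ is bounded invertible sectorial with $B^{-1}=1+A$ also sectorial. By Example~\ref{ex22}(b), $\Psi_\varepsilon$ is a complete Bernstein function, so Theorem~\ref{The2.3} and Definition~\ref{Dchill} ensure that $\Psi_\varepsilon(B^{-1})$ is sectorial, and fractional powers then define $\Psi_\varepsilon(B^{-1})^{1+a}$.

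Next, for fixed $\lambda\in i\mathbb{R}$ with $|\lambda|\geq 1$, I will use the Stieltjes representation~\eqref{eqq5} of $\Psi_\varepsilon$ to rewrite $\Psi_\varepsilon(B^{-1})^{1+a}$ as (iterated) Bochner integrals against resolvents of $B^{-1}=1+A$. The composition $(\lambda+A)^{-1}(1+A)^{-\beta}\Psi_\varepsilon(B^{-1})^{1+a}$ then becomes an integral whose integrand involves only resolvents of $A$ and fractional powers of $1+A$. The resolvent hypothesis controls $(\lambda+A)^{-1}$, the fractional-power functional calculus controls $(1+A)^{-\beta}$ and cancels the $(1+|\xi|)^\beta$ growth, and the problem reduces to showing that the $1+a$ powers of $\Psi_\varepsilon(B^{-1})$ absorb the slowly varying factor $\ell(1+|\lambda|)$ while leaving a spare factor $\ell(|\lambda|)^{-a}$.

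The cancellation rests on three ingredients. Karamata's theorem (Theorem~\ref{lema21}) provides the scalar asymptotic equivalence underlying the very design of $\Psi_\varepsilon$, namely $\Psi_\varepsilon(s)\asymp \ell(s)$ at infinity (up to constants depending on $\varepsilon$), which is the scalar backbone of the operator-level absorption. Proposition~\ref{cor2} lets me compare $\ell$ at different scales (e.g.\ $\ell(1+|\xi|)$ versus $\ell(|\lambda|)$), which is needed because the integration variable and the fixed parameter $\lambda$ live on different spectral scales. Finally, Proposition~\ref{prop21} extends the scalar CBF estimate on $\Psi_\varepsilon$ from the positive real axis to the sector in which $B^{-1}$ is sectorial, while Proposition~\ref{fsec} gives the pointwise bound $\|\Psi_\varepsilon(B^{-1})x\|\leq \|x\|\Psi_\varepsilon(\|B^{-1}x\|/\|x\|)$, which chains to estimate higher powers.

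The main obstacle will be making the heuristic operator-level cancellation $\Psi_\varepsilon(B^{-1})^{1+a}\leftrightarrow \ell(|\lambda|)^{1+a}$ rigorous. This requires a careful dyadic-type splitting of the Stieltjes integral, applying Proposition~\ref{cor2} and Theorem~\ref{lema21} in the appropriate regimes of the integration variable, and tracking the dependence on $\varepsilon$ (which appears precisely because $\Psi_\varepsilon$ is only asymptotically comparable to $\ell$, not equal to it). This step is the regularly varying analogue of the concrete logarithmic computations in~\cite{nos}, but the non-explicit form of $\ell$ forces reliance on abstract properties of slowly varying functions rather than direct integration.
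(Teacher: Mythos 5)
Your plan assembles the right supporting tools (Karamata, the Potter-type bounds of Proposition~\ref{cor2}, the sector estimate of Proposition~\ref{prop21}), and it correctly identifies the goal as an operator-level absorption of $\ell$ with a spare factor $\ell(|\lambda|)^{-a}$. But it stops exactly where the proposition's content lies: you acknowledge that making the cancellation rigorous is ``the main obstacle'' and offer only a vague ``dyadic-type splitting of the Stieltjes integral.'' That is not a proof, and the route you sketch is not obviously workable: $(1+A)^{-\beta}$ and $\Psi_{\varepsilon}(B^{-1})^{1+a}$ involve non-integer powers, so they are not literally iterated Bochner integrals against resolvents, and a naive product of norm estimates for $(\lambda+A)^{-1}$ and the smoothing factor can never produce the $\lambda$-dependent gain. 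The paper's mechanism is different and is the decisive step you are missing: with $B:=1+A$ (note you inverted the convention; in the paper $\Psi_\varepsilon$ is evaluated at $(1+A)^{-1}$), one writes the whole composition $(\lambda+A)^{-1}(1+A)^{-\beta}\Psi_{\varepsilon}(B^{-1})^{1+a}$ as a single $H^{\infty}_0$ contour integral for the sectorial operator $A+\tfrac12$, then splits the factor $\frac{1}{(z+\lambda-\frac12)}$ by the resolvent identity (partial fractions). This yields a main term equal to $(\lambda+A)^{-1}$ multiplied by the explicit scalar $\bigl((1-\lambda)^{\beta+(1+a)\varepsilon}S_{h_{\varepsilon}}((1-\lambda)^{\varepsilon})^{1+a}\bigr)^{-1}$, which by Karamata and Proposition~\ref{prop21} is $\lesssim |1-\lambda|^{-\beta}\ell(|1-\lambda|)^{-(1+a)}$ and hence cancels the assumed growth $(1+|\lambda|)^{\beta}\ell(1+|\lambda|)$ while leaving the spare $\ell^{-a}$, plus a remainder contour term $T_\lambda$ whose integrand is estimated (via the Potter bounds with a small exponent $\gamma<\beta/(2(a+1))$ and integrability of $z\mapsto(z+\tfrac12)^{-\gamma(1+a)}R(z,A+\tfrac12)$ on $\Gamma$) by $C/(1+|\lambda|)$, which dominates $\ell(|\lambda|)^{a}$.

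Two further inaccuracies in your heuristics would need fixing even within your own scheme. First, the scalar backbone is not ``$\Psi_{\varepsilon}(s)\asymp\ell(s)$ at infinity''; by Karamata (Theorem~\ref{lema21} with $\sigma=1$) the design gives $S_{h_\varepsilon}(u^{\varepsilon})\sim u^{-\varepsilon}\ell(u)$ as $u\to\infty$, i.e.\ $\Psi_{\varepsilon}\bigl((1+|z|)^{-1}\bigr)\asymp 1/\ell(1+|z|)$, and it is this reciprocal behaviour along the spectrum of $A$ near infinity that absorbs the resolvent growth; as you stated it, nothing would be absorbed. Second, Proposition~\ref{fsec} is not needed here (the paper uses it in the subsequent Proposition~\ref{Prop3.1}); invoking it does not by itself produce a bound that is uniform in $\lambda$ with the extra $\ell(|\lambda|)^{-a}$ gain, since that gain comes from evaluating the symbol at the pole $z=\tfrac12-\lambda$, not from a vector-dependent estimate.
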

\begin{proof} 
\noindent~\textbf{(1)}. We proceed as in the proof of  \cite[Proposition 3.3]{nos}~(see also the \cite[Proposition~3.4-(2)]{rozendaal}). Set $\zeta_{\varepsilon}(\lambda):=\left(\Psi_{\varepsilon}(\lambda)\right)^{1+a}$; by \cite[Theorem 2.4.2]{haase}, $\left(\Psi_{\varepsilon}(B^{-1})\right)^{1+a}=\zeta_{\varepsilon}(B^{-1})$.
Fix $\theta \in (\omega_A,\pi)$ and let the path $\Gamma:=\{re^{i\theta} \mid r\in [0,\infty)\}\cup \{re^{-i\theta} \mid r\in [0,\infty)\}$
be oriented from $\infty e^{i\theta}$ to $\infty e^{-i\theta}$. 

Since $A+\frac{1}{2}\in\text{Sect}_X(\omega_A)$ and for each $\omega \in (\omega_A,\pi]$, $S_{\omega}\ni z\mapsto \eta_{\varepsilon}(z):= \frac{\Psi_\varepsilon\left(\left(\frac{1}{2}+z\right)^{-1}\right)^{1+a}}{\left(\frac{1}{2}+z\right)^{\beta}}$, then $\eta_\varepsilon (1/2+A)$ is still well-defined, since $\eta_\varepsilon$ is continuous, $0 \in \rho(1/2+ A)$,
and for some $\delta > 0, \eta_\varepsilon(z)= O(|z|^{-\delta})$, $|z| \to \infty$ for $z\in S_\omega$. 

Then, for each $\lambda\in i\mathbb{R}$, $\vert\lambda\vert\ge 1,$ and for each~$x\in X$,  
\begin{eqnarray}\label{36}
\nonumber (\lambda+A)^{-1}(1+A)^{-\beta}\Psi_{\varepsilon}(B^{-1})^{1+a}x&=& (\lambda+A)^{-1}\eta_\varepsilon\left(\frac{1}{2}+A\right)x\\
\nonumber&=& \frac{1}{2\pi i} \int_{\Gamma} \frac{(\lambda+A)^{-1}R\left(z,A+\frac{1}{2}\right)}{(\frac{1}{2}+z)^{\beta+(1+a)\varepsilon }S_{h_{\varepsilon}}\left((\frac{1}{2}+z)^{\varepsilon}\right)^{(1+a)}} xdz\\
\nonumber &=& \frac{1}{2\pi i} \int_{\Gamma} \frac{(\lambda+A)^{-1}}{(\frac{1}{2}+z)^{\beta+(1+a)\varepsilon}S_{h_{\varepsilon}}\left((\frac{1}{2}+z)^{\varepsilon}\right)^{(1+a)}(z+\lambda-\frac{1}{2})}x dz\\
\nonumber&+& \frac{1}{2\pi i} \int_{\Gamma} \frac{R\left(z,A+\frac{1}{2}\right)}{(\frac{1}{2}+z)^{\beta+(1+a)\varepsilon}S_{h_{\varepsilon}}\left((\frac{1}{2}+z)^{\varepsilon}\right)^{(1+a)}(z+\lambda-\frac{1}{2})}x dz\\
&=& \frac{1}{(1-\lambda)^{\beta+(1+a)\varepsilon}S_{h_{\varepsilon}}((1-\lambda)^{\varepsilon})^{(1+a)}}(\lambda+A)^{-1}x+T_\lambda x,
\end{eqnarray}
where
\begin{eqnarray*}
    T_\lambda x&:=& \frac{1}{2\pi i} \int_{\Gamma} \frac{R\left(z,A+\frac{1}{2}\right)}{(\frac{1}{2}+z)^{\beta+(1+a)\varepsilon}S_{h_{\varepsilon}}\left((\frac{1}{2}+z)^{\varepsilon}\right)^{(1+a)}(z+\lambda-\frac{1}{2})}x dz. 
\end{eqnarray*}

First, we will limit the first term of the equation~\eqref{36}. Note also  that since $S_{h_{\varepsilon}}$ is a Stieltjes function, one has by Proposition~\ref{prop21} that for each $z\in \Gamma$,
\begin{equation*}  S_{h_{\varepsilon}}\left(\left|\frac{1}{2}+z\right|^{\varepsilon}\right)\lesssim \left|S_{h_{\varepsilon}}\left(\left(\frac{1}{2}+z\right)^{\varepsilon}\right)\right|\lesssim S_{h_{\varepsilon}}\left(\left|\frac{1}{2}+z\right|^{\varepsilon}\right)
\end{equation*}
(note that $(\text{arg}(z+1/2))/2 \in \left(-\frac{\pi}{2},\frac{\pi}{2}\right)$). 

It follows from Theorem~\ref{lema21}-(ii) that for each $|z|\ge |z_0|$,
\begin{equation}\label{eq33}
\left|S_{h_{\varepsilon}}\left(\left|\frac{1}{2}+z\right|^{\varepsilon}\right)\right|\gtrsim |1+z|^{-\varepsilon} \ell\left(\left|\frac{1}{2}+z\right|\right)  
\end{equation}
and for sufficiently large values of $|z|$ and $|z_0|$ (with $z,z_0\in \Gamma$; here, $z_0$ is considered fixed), it follows from Proposition~\ref{cor2} (a) that for each $\gamma>0$,
\begin{equation}\label{34}
\ell\left(\left|\frac{1}{2}+z\right|\right) \gtrsim \left(\frac{|z_0|}{\left|\frac{1}{2}+z\right|}\right)^{\gamma}\ell (|z_0|).
\end{equation}

Finally, let $\gamma \in \left(0, \dfrac{\beta}{2(a+1)}\right)$ and note that the function  $z\mapsto (z+1/2)^{-\gamma(1+a)}R(z,A+1/2)$
 is integrable on $\Gamma$. By combining relation~\eqref{34}  with \cite[Lemma~A.1]{rozendaal}, one obtains for each $z\in \Gamma$ with sufficiently
large $|z|$ and each $\lambda \in i\mathbb{R}$,
\begin{eqnarray*}
 \frac{1}{|z+\frac{1}{2}|^{\beta+(1+a)\varepsilon}S_{h_\varepsilon}\left(\left|\frac{1}{2}+z\right|^{\varepsilon}\right)^{1+a}|z+\lambda-\frac{1}{2}|}&\lesssim& \frac{1}{|z+\frac{1}{2}|^{\beta-2\gamma(1+a)}|z+\lambda-\frac{1}{2}|}\lesssim \frac{1}{1+|\lambda|}.
\end{eqnarray*}

Therefore, by combining all the previous estimates with relation~\eqref{36}, one obtains 
\begin{eqnarray*}
\ell(|\lambda|)^{a} \|(\lambda+A)^{-1}(1+A)^{-\beta}\Psi_{\varepsilon}(B^{-1})^{1+a}\|_{\mathcal{L}(X)} 
  &\lesssim & \frac{1}{|1-\lambda|^{\beta}\ell(|1-\lambda|)}\|(\lambda+A)^{-1}\|_{\mathcal{L}(X)}+\frac{\ell(|\lambda|)^{a}}{1+|\lambda|},
\end{eqnarray*}
where we have used an estimate similar to the one presented in relation~\eqref{eq33} in order to control the first term on the right-hand side of relation~\eqref{36}.

Since for each $a\geq 0$, $\displaystyle{\lim_{|\lambda|\to \infty} \frac{\ell(|\lambda|)^a}{1+|\lambda|}=0}$ (see~Proposition~\ref{cor2}-(a)), the result follows.

\

\textbf{(2)}. It follows from the same arguments presented in the proof of item~\textbf{(1)}.
\end{proof}
\begin{remark}
\begin{rm}
Note that by relation~$\eqref{36}$, one has for each $\lambda \in i\mathbb{R}$  and for each $\varepsilon>0$ that
\begin{eqnarray*}
 \left\|\frac{(\lambda+A)^{-1}}{(1-\lambda)^{\beta+(1+a)\varepsilon}S_{h_{\varepsilon}}((1-\lambda)^{\varepsilon})^{1+a}}\right\|_{\mathcal{L}(X)} 
 &\lesssim& \|(\lambda+A)^{-1}(1+A)^{-\beta} \Psi_\varepsilon(B^{-\varepsilon})^{1+a}\|_{\mathcal{L}(X)}+\frac{1}{1+|\lambda|};
\end{eqnarray*}
thus, by assuming that condition~\eqref{eeq24} is valid, it follows from \cite[Proposition~2.4]{chill} and Lemma~\ref{lema21} that for sufficiently large values of $|\lambda|$,
\begin{eqnarray*}
  \left\|(\lambda+A)^{-1}\right\|_{\mathcal{L}(X)}&\lesssim& |(1-\lambda)^{\beta+\varepsilon(1+a)}S_{h_{\varepsilon}}((1-\lambda)^{\varepsilon})^{1+a}|\\
  &\lesssim & |1-\lambda|^{\beta+\varepsilon(1+a)}S_{h_{\varepsilon}}(|1-\lambda|^{\varepsilon})^{1+a}\\
  &\lesssim& |1-\lambda|^{\beta} \ell(|1-\lambda|).
\end{eqnarray*}
This shows that the converse of Proposition~\ref{prop3.1}-(1) is also valid. By  reasoning as before, one can also show that the converse of Proposition~\ref{prop3.1}-(2) is valid. 
\end{rm}
\end{remark}

In the following results, let $\sigma:=\frac{1}{\beta+1}\in (0,1)$ and set $f_\sigma(\lambda):=\lambda^{1-\sigma}\Psi_{\varepsilon}(1/\lambda)^{-\sigma}\in \mathcal{CBF}$, $g_{\sigma}(\lambda):=\lambda^{1-\sigma}\Phi(\lambda)^{-\sigma}\in \mathcal{CBF}$. Let also, for each $\tau>0$ and $q\in [1,\infty]$, $D_{f_{\sigma}}(\tau,q):=D_{f_{\sigma}(B)}(\tau,q)$.

\begin{lemma}\label{lemaaux}
\rm{Let $-A$ be the generator of a $C_0$-semigroup $(T(t))_{t\ge 0}$ defined in a Banach space $X$, and suppose that $\overline{\mathbb{C}_{-}}\subset \rho(A)$.
\begin{enumerate}[(1)]
\item If for each $\lambda \in \overline{\mathbb{C}_{-}}$,
$\|(\lambda+A)^{-1}\|_{\mathcal{L}(X)}\lesssim (1+|\lambda|)^{\beta} \ell (1+|\lambda|)$, then for each $n\in \mathbb{N}_0$, $k\in \{0,1,\ldots,n+1\}$ and $\xi\in \mathbb{R}$,
\[\|R(i\xi,A)^{k}\|_{\mathcal{L}(D_{f_{\sigma}}(((n+1)(\beta+1)+1,1),X)}\lesssim \dfrac{1}{(1+|\xi|)^{\frac{\beta}{\beta+1}}\ell(1+|\xi|)^{\frac{1}{\beta+1}}}.\]
\item If for each $\lambda \in \overline{\mathbb{C}_{-}}$, $\|(\lambda+A)^{-1}\|_{\mathcal{L}(X)}\lesssim \dfrac{(1+|\lambda|)^{\beta}}{\kappa(1+|\lambda|)}$, then for each $n\in \mathbb{N}_0$, $k\in \{0,1,\ldots,n+1\}$ and $\xi\in \mathbb{R}$,
\[\|R(i\xi,A)^{k}\|_{\mathcal{L}(D_{f_{\sigma}}(n(\beta+1)+1,1),X)}\lesssim (1+|\xi|)^{\frac{\beta^2}{\beta+1}}\ell(1+|\xi|)^{\frac{\beta}{\beta+1}}.\]
\end{enumerate}
}    
\end{lemma}
\begin{proof}
\noindent \textbf{(1)} Note that by Proposition~\ref{prop3.1}, one has for each $k\in\{1,2,\ldots, n\}$ that
\begin{eqnarray*}
\|R(i\xi,A)^{k}\|_{\mathcal{L}(D(B^{n\beta} [\Psi_{\varepsilon}(B^{-1})]^{-(n+1)}),X))} &\lesssim& \|R(i\xi,A)^{k}B^{-n\beta}\Psi_{\varepsilon} (B^{-1})^{n+1}\|_{\mathcal{L}(X)}\lesssim 1,
\end{eqnarray*}
and by replacing $\beta$ for $\beta+\dfrac{\beta}{\beta+1}$ in the proof of Proposition~\ref{prop3.1} (in case $a=\dfrac{1}{\beta+1}$), one gets
\begin{eqnarray*}
\|R(i\xi,A)\|_{\mathcal{L}(D(B^{\beta+\frac{\beta}{\beta+1}}[\Psi_{\varepsilon}(B^{-1})]^{-\frac{1}{\beta+1}}),X)}&\lesssim& \|R(i\xi,A)B^{-\beta-\frac{\beta}{\beta+1}}\Psi_{\varepsilon}(B^{-1})^{\frac{1}{\beta+1}}\|_{\mathcal{L}(X)}\\ 
&\lesssim&\frac{1}{(1+|\xi|)^{\frac{\beta}{\beta+1}}\ell(1+|\xi|)^{\frac{1}{\beta+1}}}.
\end{eqnarray*}

Thus, one has for each $k\in\{1,\ldots, n+1\}$ 
that
\begin{eqnarray*}
\nonumber \|R(i\xi,A)^{k}\|_{\mathcal{L}(D(C),X)}&\lesssim&\|R(i\xi,A)^{k}B^{-\beta(n+1)-\frac{\beta}{\beta+1}}\Psi_{\varepsilon}(B^{-1})^{n+1+\frac{1
}{\beta+1}}\|_{\mathcal{L}(X)}\\
\nonumber &\le& \|R(i\xi,A)^{k-1}B^{-n\beta}\Psi_{\varepsilon}(B^{-1})^{n+1}\|_{\mathcal{L}(X)}\\ 
\nonumber &\cdot& \|R(i\xi,A)B^{-\beta -\frac{\beta}{\beta+1}}\Psi_{\varepsilon}(B^{-1})^{\frac{1}{\beta+1}}\|_{\mathcal{L}(X)}\\ 
&\lesssim& \frac{1}{(1+|\xi|)^{\frac{\beta}{\beta+1}}\ell(1+|\xi|)^{\frac{1}{\beta+1}}},
\end{eqnarray*}
where $C:= B^{\beta(n+1)+\frac{\beta}{\beta+1}}[\Psi_{\varepsilon}(B^{-1})]^{-(n+1)-\frac{1}{\beta+1}}$. Therefore, the claim follows from~\eqref{inter},
since
 \begin{equation*}
D_{f_{\sigma}}((\beta+1)(n+1)+1,1)\subset D((f_{\sigma}(B))^{(\beta+1)(n+1)+1})=D(B^{\beta(n+1)+\frac{\beta}{\beta+1} }\Psi_{\varepsilon}(B^{-1})^{n+\frac{1}{\beta+1}}).
\end{equation*}

\noindent \textbf{(2)} Case $n=0$. It follows from Proposition~\ref{fsec} that for each $x\in X\setminus\{0\}$ and each $\xi \in \mathbb{R}$,
\begin{eqnarray}\label{eq35}
&&\nonumber\|R(i\xi,A)B^{-\frac{\beta}{\beta+1}}\Psi_{\varepsilon}(B^{-1})^{\frac{1}{\beta+1}}x\|_X\\
\nonumber&\lesssim&\|R(i\xi,A)B^{-\frac{\beta}{\beta+1}}x\|_X \left(\Psi_\varepsilon\left(\frac{\|R(i\xi,A)B^{-1-\frac{\beta}{\beta+1}}x\|_X}{\|R(i\xi,A)B^{-\frac{\beta}{\beta+1}}x\|_X}\right)\right)^{\frac{1}{\beta+1}}
\\
\nonumber &\lesssim& \|R(i\xi,A)B^{-\frac{\beta}{\beta+1}}x\|^{1-\frac{\varepsilon}{\beta+1}}_X \|\|R(i\xi,A)B^{-1-\frac{\beta}{\beta+1}}x\|^{\frac{\varepsilon}{\beta+1}}_X \left({S_{h_{\varepsilon}}\left(\dfrac{\|R(i\xi,A)B^{-\frac{\beta}{\beta+1}}x\|^{\varepsilon}_X}{ \|R(i\xi,A)B^{-1-\frac{\beta}{\beta+1}}x\|^{\varepsilon}_X}\right)}\right)^{-\frac{1}{\beta+1}}\\
&\lesssim& \dfrac{\|R(i\xi,A)B^{-\frac{\beta}{\beta+1}}x\|_X}{\left(\ell\left(\dfrac{\|R(i\xi,A)B^{-\frac{\beta}{\beta+1}}x\|_X}{ \|R(i\xi,A)B^{-1-\frac{\beta}{\beta+1}}x\|_X}\right)\right)^{\frac{1}{\beta+1}}} \lesssim \dfrac{\|R(i\xi,A)B^{-\frac{\beta}{\beta+1}}x\|_X}{\left(\ell\left(\dfrac{\|R(i\xi,A)B^{-\frac{\beta}{\beta+1}}x\|_X}{ (1+|\xi|)^{\beta-1-\frac{\beta}{\beta+1}}\ell(1+|\xi|) \|x\|_X}\right)\right)^{\frac{1}{\beta+1}}},
\end{eqnarray}
where we have used relation~\eqref{eq33} in the third inequality. 

One has for each $x\in X \setminus\{0\}$ that $\dfrac{\|R(i\xi,A)B^{-\frac{\beta}{\beta+1}}x\|_X}{(1+|\xi|)^{\beta-1-\frac{\beta}{\beta+1}}\ell(1+|\xi|) \|x\|_X}\lesssim (1+|\xi|)$ (see \cite[Proposition~3.2]{rozendaal}), and so it follows from Proposition~\ref{cor2}(a) that
\begin{equation}\label{eq36}
    \frac{(\ell(1+|\xi|))^{\frac{1}{\beta+1}}}{\left(\ell\left(\dfrac{\|R(i\xi,A)B^{-\frac{\beta}{\beta+1}}x\|_X}{ (1+|\xi|)^{\beta-1-\frac{\beta}{\beta+1}}(\ell(1+|\xi|) )|x\|_X}\right)\right)^{\frac{1}{\beta+1}}}\lesssim \frac{(1+|\xi|)^{\beta-\frac{\beta}{\beta+1}}\ell(1+|\xi|)\|x\|_X}{\|R(i\xi,A)B^{-\frac{\beta}{\beta+1}}x\|_X}.
\end{equation}
By combining relations~\eqref{eq35} and~\eqref{eq36}, one gets
\begin{equation*}
    \|R(i\xi,A)B^{-\frac{\beta}{\beta+1}}\Psi_{\varepsilon}(B^{-1})^{\frac{1}{\beta+1}}\|_{\mathcal{L}(X)} \lesssim (1+|\xi|)^{\beta-\frac{\beta}{\beta+1}}\ell(1+|\xi|)^{1-\frac{1}{\beta+1}}=(1+|\xi|)^{\frac{\beta^2}{\beta+1}}\ell(1+|\xi|)^{\frac{\beta}{\beta+1}}.
\end{equation*}
Case $n\in \mathbb{N}$. It follows from \cite[Theorem~2.4.2]{haase} and from Proposition~\ref{prop3.1} (with $a=\dfrac{1}{(\beta+1)n}$) that 
\begin{eqnarray*}
&&\|R(i\xi,A)^{n+1}B^{-n\beta-\frac{\beta}{\beta+1}}\Psi_{\varepsilon}(B^{-1})^{n+\frac{1}{\beta+1}}\|_{\mathcal{L}(X)}\lesssim\|R(i\xi,A)\|\|R(i\xi,A)B^{-\beta}\Psi_{\varepsilon}(B^{-1})^{1+\frac{1}{(\beta+1)n}}\|^n\\
&\lesssim& \frac{\|R(i\xi,A)B^{-\frac{\beta}{\beta+1}}\|_{\mathcal{L}(X)}}{\ell(1+|\xi|)^{\frac{1}{\beta+1}}}\lesssim (1+|\xi|)^{\beta-\frac{\beta}{\beta+1}} \ell(1+|\xi|)^{\frac{\beta}{\beta+1}};
\end{eqnarray*}
therefore, one has for each $k\in\{1,\ldots,n+1\}$ that
\begin{eqnarray*}
   \|R(i\xi,A)^{k}B^{-\beta n-\frac{\beta}{\beta+1}}\Psi_{\varepsilon}(B^{-1})^{n+\frac{1}{\beta}}\|_{\mathcal{L}(X)}
    &\lesssim& (1+|\xi|)^{\beta-\frac{\beta}{\beta+1}} \ell(1+|\xi|)^{\frac{\beta}{\beta+1}}.
\end{eqnarray*}
Therefore, the claim follows from~\eqref{inter},
since

\begin{equation*}
    D_{f_{\sigma}}((\beta+1)n+1,1)\subset D((f_{\sigma}(B))^{(\beta+1)n+1})=D(B^{\beta n+\frac{\beta}{\beta+1}}\Psi_{\varepsilon}(B^{-1})^{n+\frac{1}{\beta+1}}).
\end{equation*}
\end{proof}

\begin{proposition}
 \label{Prop3.1}
\rm{Let $-A$ be the generator of a $C_0$-semigroup $(T(t))_{t\ge 0}$ defined in a Banach space $X$ with Fourier type $p\in[1,2]$. Let $n\in \mathbb{N}_0$, and suppose that $\overline{\mathbb{C}_{-}}\subset \rho(A)$. 
\begin{enumerate}[(1)]
\item If for each $\lambda \in \overline{\mathbb{C}_{-}}$,
$\|(\lambda+A)^{-1}\|_{\mathcal{L}(X)}\lesssim (1+|\lambda|)^{\beta} \ell (1+|\lambda|)$, then for each $k\in \{0,1,\ldots,n+1\}$ and each $q\in [1,\infty]$,
\begin{equation*}
    \sup\{\|R(i\xi,A)^{k}\|_{\mathcal{L}\left(D_{f_{\sigma}}((n+1)(\beta+1),q),X\right)}\mid |\xi|\geq 1\}<\infty.
\end{equation*}

\item If for each $\lambda \in \overline{\mathbb{C}_{-}}$, $\|(\lambda+A)^{-1}\|_{\mathcal{L}(X)}\lesssim \dfrac{(1+|\lambda|)^{\beta}}{\kappa(1+|\lambda|)}$, then for each $k\in \{0,1,\ldots,n+1\}$ and each $q\in [1,\infty]$,
\begin{equation*}
    \sup\{\|R(i\xi,A)^{k}\|_{\mathcal{L}\left(D_{g_{\sigma}}((n+1)(\beta+1),q),X\right)}\mid |\xi|\geq 1\}<\infty.
\end{equation*}

\end{enumerate}
 }
\end{proposition}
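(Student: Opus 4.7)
My starting point is Proposition~\ref{prop3.1}-(1) with $a=0$. Unpacking the definition $f_\sigma(\mu)=\mu^{1-\sigma}\Psi_\varepsilon(1/\mu)^{-\sigma}$ with $\sigma=1/(\beta+1)$, and using $1-\sigma=\beta\sigma$, one finds $f_\sigma(B)^{\beta+1}=B^{\beta}\Psi_\varepsilon(B^{-1})^{-1}$, so that the conclusion of Proposition~\ref{prop3.1}-(1) rewrites as
\[\sup\bigl\{\|R(i\xi,A)f_\sigma(B)^{-(\beta+1)}\|_{\mathcal{L}(X)}:|\xi|\ge 1\bigr\}<\infty.\]
Since $R(i\xi,A)$ and $f_\sigma(B)$ both belong to the functional calculus of $A$, they commute, and iteration yields, for each $k\in\{0,1,\ldots,n+1\}$,
\[\|R(i\xi,A)^k f_\sigma(B)^{-k(\beta+1)}\|_{\mathcal{L}(X)}\le\|R(i\xi,A)f_\sigma(B)^{-(\beta+1)}\|_{\mathcal{L}(X)}^k\lesssim 1\]
uniformly in $|\xi|\ge 1$. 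This expresses the uniform boundedness of $R(i\xi,A)^k:D(f_\sigma(B)^{k(\beta+1)})\to X$, and the same commutation upgrades it to the uniform boundedness of $R(i\xi,A)^k:D(f_\sigma(B)^{(k+m)(\beta+1)})\to D(f_\sigma(B)^{m(\beta+1)})$ for every integer $m\ge 0$.

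The second step promotes these domain estimates to estimates on the real interpolation spaces $D_{f_\sigma}(\cdot,q)$ via Theorem~\ref{inter}. Because $f_\sigma\in\mathcal{CBF}$ makes $f_\sigma(B)$ sectorial (Theorem~\ref{The2.3}) and $\overline{\mathbb{C}_-}\subset\rho(A)$ forces $0\in\rho(f_\sigma(B))$, one has the reiteration identity
\[(D(f_\sigma(B)^{a}),D(f_\sigma(B)^{b}))_{\theta,q}=D_{f_\sigma}((1-\theta)a+\theta b,q),\qquad 0\le a<b,\ \theta\in(0,1),\ q\in[1,\infty],\]
which identifies the source of the interpolated operator with $D_{f_\sigma}((k+\theta m)(\beta+1),q)$ and its target with $D_{f_\sigma}(\theta m(\beta+1),q)\hookrightarrow X$. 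Choosing $m>(n+1-k)(\beta+1)$ and $\theta=(n+1-k)(\beta+1)/m\in(0,1)$ tunes the source exponent to exactly $(n+1)(\beta+1)$, yielding the desired uniform bound for every $k\in\{0,1,\ldots,n\}$ and every $q\in[1,\infty]$.

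The delicate case is the borderline $k=n+1$, where the ladder above collapses at $\theta=0$. For $q=1$ the inclusion $D_{f_\sigma}((n+1)(\beta+1),1)\subset D(f_\sigma(B)^{(n+1)(\beta+1)})$ recorded in~\eqref{inter} reduces the claim to the domain estimate already established. For $q\in(1,\infty)$ I would factor $R(i\xi,A)^{n+1}=R(i\xi,A)\cdot R(i\xi,A)^n$, use the case $k=n$ just proved on the second factor, and absorb the remaining $R(i\xi,A)$ using the one-step bound together with the density of $D(f_\sigma(B)^{\alpha})$ in $D_{f_\sigma}((n+1)(\beta+1),q)$ for $\alpha>(n+1)(\beta+1)$ (Theorem~6.6.1 of~\cite{haase}); the endpoint $q=\infty$ is then recovered by the monotonicity of the interpolation spaces in $q$ or by a K-functional argument. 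Part~(2) follows by the verbatim procedure, with $\kappa$, $\Phi$, $g_\sigma$ replacing $\ell$, $\Psi_\varepsilon$, $f_\sigma$ and Proposition~\ref{prop3.1}-(2) replacing Proposition~\ref{prop3.1}-(1). The main obstacle in the whole argument is precisely this endpoint $k=n+1$: the natural interpolation ladder lands on spaces whose exponent is strictly larger than $(n+1)(\beta+1)$, so one must either factor off a power of the resolvent or invoke density and pass to limits to avoid losing the correct exponent.
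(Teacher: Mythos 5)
There is a genuine gap at the borderline case $k=n+1$ (and, after your factorization, equally at $k=1$, $n=0$), which is precisely the case your scheme cannot reach. All the estimates you produce are \emph{uniform} bounds of $R(i\xi,A)^k$ on fractional domain spaces $D(f_\sigma(B)^{\alpha})$ with $\alpha\ge (n+1)(\beta+1)$; via $D_{f_\sigma}(\tau,1)\subset D(f_\sigma(B)^{\tau})\subset D_{f_\sigma}(\tau,\infty)$ such bounds only control the smallest space $D_{f_\sigma}((n+1)(\beta+1),1)$, and no interpolation between spaces of exponent $\ge (n+1)(\beta+1)$ can land on $D_{f_\sigma}((n+1)(\beta+1),q)$ with $q>1$. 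Your three proposed repairs all fail: factoring $R^{n+1}=R\cdot R^n$ reduces the problem to uniform boundedness of $R(i\xi,A)$ on $D_{f_\sigma}(\beta+1,q)$, which is the same endpoint statement with $n=0$; density of $D(f_\sigma(B)^{\alpha})$ in $D_{f_\sigma}((n+1)(\beta+1),q)$ cannot upgrade an operator bound taken with respect to the stronger graph norm to one with respect to the weaker interpolation norm; and the monotonicity $D_{f_\sigma}(\tau,q_1)\subset D_{f_\sigma}(\tau,q_2)$ for $q_1\le q_2$ goes in the wrong direction, so the case $q=\infty$ (the largest space) is never recovered from $q<\infty$.

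The paper's proof handles exactly this point differently: it establishes two $\xi$-\emph{dependent} estimates at regularity levels strictly straddling the target, namely $\|R(i\xi,A)^{k}\|_{\mathcal{L}(D_{f_{\sigma}}((n+1)(\beta+1)+1,1),X)}\lesssim (1+|\xi|)^{-\frac{\beta}{\beta+1}}\ell(1+|\xi|)^{-\frac{1}{\beta+1}}$ and $\|R(i\xi,A)^{k}\|_{\mathcal{L}(D_{f_{\sigma}}(n(\beta+1)+1,1),X)}\lesssim (1+|\xi|)^{\frac{\beta^2}{\beta+1}}\ell(1+|\xi|)^{\frac{\beta}{\beta+1}}$, and then interpolates with $\theta=\frac{\beta}{\beta+1}$, so that the reiteration identity produces $D_{f_\sigma}((n+1)(\beta+1),q)$ for every $q\in[1,\infty]$ while the product $A_0^{1-\theta}A_1^{\theta}$ of the two $\xi$-dependent norms is uniformly bounded. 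Obtaining the decaying bound requires Proposition~\ref{prop3.1} with $a=\frac{1}{\beta+1}$ and $\beta$ replaced by $\beta+\frac{\beta}{\beta+1}$ (so the extra factor $\ell(|\xi|)^{a}$ and the extra power of $B^{-1}$ are exploited), and the growing bound at the lower level uses Proposition~\ref{fsec} together with Karamata's Theorem~\ref{lema21} and the Potter-type bounds of Proposition~\ref{cor2}. None of these ingredients, which are what make the endpoint $k=n+1$, $q\in(1,\infty]$ accessible, appear in your proposal; for $k\le n$ your interpolation ladder is essentially sound, but as written the proposition is not proved.
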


\begin{proof} 
Since the case $k=0$ is trivial, we only consider the case $k>0$.

\noindent~\textbf{(1)}.
Let $\xi\in \mathbb{R}$ be such that $|\xi|\ge 1$. The result is a consequence of the identity
\begin{equation*}
D_{f_{\sigma}}((n+1)(\beta+1),q)=(D_{f_{\sigma}}(n(\beta+1)+1,1),D_{f_{\sigma}}((n+1)(\beta+1) +1,1))_{\frac{\beta}{\beta+1}, q}
\end{equation*}
(which is valid for each $n\in \mathbb{N}_0$ and each $q\in [1,\infty]$), Lemma~\ref{lemaaux} and Theorem~\ref{inter1}.

\noindent~\textbf{(2)}. It follows from the same arguments presented in the proof of item~\textbf{(1)}.

\end{proof}

The following result is a version of \cite[Theorem~3.2]{deng} for our setting.

\begin{theorem}\label{teo32}
\rm{Let $-A$ be the generator of a $C_0$-semigroup $(T(t))_{t\ge 0}$ defined in a Banach space $X$ with Fourier type $p\in[1,2]$. Suppose that $\overline{\mathbb{C}_{-}}\subset \rho(A)$.
 \begin{enumerate}[(i)]
\item Assume that for each $\lambda \in \overline{\mathbb{C}_{-}}$,
 \begin{equation*}
\|R(\lambda,A)\|_{\mathcal{L}(X)}\lesssim (1+|\lambda|)^{\beta}\ell(1+|\lambda|).
 \end{equation*}
 
 Let $\varepsilon>0$, $\gamma>0$, $p\in [1,\infty)$, $s\in (0,1/p)$, and suppose that there exist $n\in \mathbb{N}_0$ and
$q\in [1,\infty]$ such that
\begin{equation*}
T_{R(i\cdot,A)^k}:B^{s}_{p,p}(\mathbb{R}, D_{f_{\sigma}}(\gamma,p))\rightarrow L^q(\mathbb{R},X)
\end{equation*}
 is bounded for each $k\in \{n-1,n,n+1\}\cap \mathbb{N}$.  Then, there exists $C_{n,\varepsilon}\geq 0$ such that for each $t\geq 1$ and each $x\in D_{f_\sigma}(\gamma,p)$,
\begin{equation*}
  \|T(t)B^{-\sigma s}\Psi_\varepsilon(B^{-1})^{-\sigma s}x\|_X\leq C_{n,\varepsilon} t^{-n} \|x\|_{D_{f_\sigma}(\gamma+s,p)}.  
\end{equation*} 

\item Suppose that for each $\lambda \in \overline{\mathbb{C}_{-}}$,
 \begin{equation*}
\|R(\lambda,A)\|_{\mathcal{L}(X)}\lesssim (1+|\lambda|)^{\beta}(1/\kappa(1+|\lambda|)).
 \end{equation*}
 Let $\gamma>0$, $p\in [1,\infty)$, $s\in (0,1/p)$, and suppose that there exist $n\in \mathbb{N}_0$ and $q\in [1,\infty]$ such that
\begin{equation*}
T_{R(i\cdot,A)^k}:B^{s}_{p,p}(\mathbb{R}, D_{g_\sigma}(\gamma,p))\rightarrow L^q(\mathbb{R},X)
\end{equation*}
 is bounded for each $k\in \{n-1,n,n+1\}\cap \mathbb{N}$.  Then, there exists $\tilde{C}_{n}\geq 0$ such that for each $t\geq 1$ and each $x\in D_{g_{\sigma}}(\gamma,p)$,
\begin{equation*}
  \|T(t)B^{-\sigma s}\Phi(B^{-1})^{-\sigma s}x\|_X\leq \tilde{C}_{n} t^{-n} \|x\|_{D_{g_\sigma}(\gamma+s,p)}.  
\end{equation*} 

 \end{enumerate}
}
\end{theorem}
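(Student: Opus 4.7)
My plan is to adapt the proof of Theorem~3.2 in~\cite{deng} to the present regularly varying setting, systematically replacing the interpolation spaces $D_A(\gamma, p)$ used there by $D_{f_\sigma}(\gamma, p)$ in part~(i) (respectively $D_{g_\sigma}(\gamma, p)$ in part~(ii)), where $f_\sigma$ and $g_\sigma$ are the complete Bernstein functions introduced just before Proposition~\ref{Prop3.1}. Since the argument for~(ii) parallels that of~(i) (with $\ell$ replaced by $1/\kappa$ and $\Psi_\varepsilon$ by $\Phi$), I would focus on~(i). Fix $x$ in a dense subspace of $D_{f_\sigma}(\gamma+s, p)$ in which all formal manipulations can be justified, for instance $\mathcal{D}(f_\sigma(B)^N)$ for $N$ sufficiently large.

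The first step is to express $T(t)x$ as a Fourier-type integral along the imaginary axis. Using a smooth cutoff $\varphi \in \mathcal{S}(\mathbb{R})$ chosen both to handle the singularity at $\xi=0$ that arises after integration by parts and to guarantee absolute convergence at infinity via the uniform bounds of Proposition~\ref{Prop3.1}, one writes
\begin{equation*}
T(t) x \;=\; \frac{1}{2\pi}\int_{\mathbb{R}} e^{it\xi}\, R(i\xi,A)\,\varphi(\xi)\, x \; d\xi \;+\; (\text{lower-order correction}),
\end{equation*}
and then integrates by parts $n$ times in $\xi$. Using the identity $\tfrac{d^n}{d\xi^n} R(i\xi,A) = n!\,(-i)^n R(i\xi,A)^{n+1}$, this produces a prefactor of order $t^{-n}$ and an integrand with main term $R(i\xi,A)^{n+1}$, together with $R(i\xi,A)^{n}$ and $R(i\xi,A)^{n-1}$ arising from derivatives of $\varphi$.

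The next step is to recognize each of the resulting integrals as the evaluation at time $t$ of a Fourier multiplier $T_{R(i\cdot,A)^k}$ applied to an auxiliary function $g_{x,k}$ built from $x$ and the relevant derivatives of $\varphi$. Invoking the hypothesis that $T_{R(i\cdot,A)^k}\colon B^s_{p,p}(\mathbb{R}, D_{f_\sigma}(\gamma,p))\to L^q(\mathbb{R},X)$ is bounded for $k\in\{n-1,n,n+1\}\cap\mathbb{N}$, together with a Sobolev-type embedding into continuous functions (valid since $s\in(0,1/p)$) that converts the $L^q$-estimate into a pointwise bound at the point $t$, yields
\begin{equation*}
\|T(t)x\|_X \;\le\; C\,t^{-n}\sum_{k}\|g_{x,k}\|_{B^s_{p,p}(\mathbb{R}, D_{f_\sigma}(\gamma, p))}.
\end{equation*}
To close the estimate, I would bound each $\|g_{x,k}\|_{B^s_{p,p}(\mathbb{R}, D_{f_\sigma}(\gamma, p))} \lesssim \|x\|_{D_{f_\sigma}(\gamma+s, p)}$, using the interpolation characterization of vector-valued Besov spaces combined with the identification $(D_{f_\sigma}(\gamma,p), D_{f_\sigma}(\gamma+1,p))_{s,p} = D_{f_\sigma}(\gamma+s, p)$; a density argument then extends the estimate to all $x \in D_{f_\sigma}(\gamma+s, p)$.

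The principal obstacle is the careful orchestration of the cutoffs: near $\xi=0$ one must arrange that the integration-by-parts boundary terms either vanish or fall into the lower-order classes controlled by the hypothesis for $k \in \{n-1, n\}$, while at infinity absolute convergence must be secured using the uniform bounds of Proposition~\ref{Prop3.1} on $R(i\xi,A)^{n+1}$ acting on $D_{f_\sigma}((n+1)(\beta+1), q)$. The appearance of $k \in \{n-1, n, n+1\}$ in the hypothesis is precisely what accommodates these boundary and error contributions.
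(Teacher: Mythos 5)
Your overall route is the intended one: the paper's proof of Theorem~\ref{teo32} is simply an instruction to repeat the proof of Theorem~3.2 in~\cite{deng} with the scale $D_A(\cdot,p)$ replaced by $D_{f_\sigma}(\cdot,p)$ (resp.\ $D_{g_\sigma}(\cdot,p)$), and your skeleton --- Fourier representation of the orbit, $n$ integrations by parts producing $R(i\cdot,A)^{k}$ for $k\in\{n-1,n,n+1\}$, the multiplier hypothesis, and the reiteration identity $(D_{f_\sigma}(\gamma,p),D_{f_\sigma}(\gamma+1,p))_{s,p}=D_{f_\sigma}(\gamma+s,p)$ --- is the right one. However, two mechanisms in your write-up would fail as stated. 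First, the passage from the $L^q(\mathbb{R};X)$ bound on $T_{R(i\cdot,A)^k}g_{x,k}$ to a bound at the single time $t$ cannot be done by ``a Sobolev-type embedding into continuous functions valid since $s\in(0,1/p)$'': the embedding $B^{s}_{p,p}(\mathbb{R};X)\hookrightarrow C_b(\mathbb{R};X)$ requires $s>1/p$, which contradicts your standing assumption $s<1/p$, and in any case the hypothesis gives no Besov regularity on the output, only membership in $L^q$. The correct device is the semigroup itself: writing $T(t)x=T(t-\tau)T(\tau)x$ and using $\sup_{0\le\tau\le 1}\|T(\tau)\|_{\mathcal{L}(X)}<\infty$, one controls $\|T(t)x\|_X$ by the $L^q$-average of $\|T(\cdot)x\|_X$ over a unit interval adjacent to $t$.

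Second, your description of the auxiliary inputs $g_{x,k}$ as ``built from $x$ and the relevant derivatives of $\varphi$'' hides the essential point. If the $g_{x,k}$ were pure tensors $\check{\varphi}^{(j)}\otimes x$, their $B^{s}_{p,p}(\mathbb{R};D_{f_\sigma}(\gamma,p))$-norms would be $\lesssim\|x\|_{D_{f_\sigma}(\gamma,p)}$, the parameter $\gamma+s$ would never appear, and the multiplier output evaluated at $t$ would only be a smoothed average of $s^{n}T(s)x$ near $s=t$, not $t^{n}T(t)x$. In the argument of~\cite{deng} the inputs are built from the truncated orbit, of the form $\mathbf{1}_{[0,\infty)}(\cdot)\,\varphi(\cdot)\,T(\cdot)x$ and its time-multiplied variants; it is the jump of this truncation at $0$ that forces $s<1/p$, and it is the conversion of time-regularity of the orbit into space-regularity of $x$ (via the reiteration identity above, applied to the part of $f_\sigma(B)$, together with Proposition~\ref{Prop3.1} to make these orbit functions admissible) that produces the norm $\|x\|_{D_{f_\sigma}(\gamma+s,p)}$ in the conclusion. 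Once these two steps are set up as in~\cite{deng}, the rest of your outline (density of $\mathcal{D}(f_\sigma(B)^N)$, the product-rule bookkeeping explaining $k\in\{n-1,n,n+1\}$, and the verbatim repetition for part~(ii) with $\Phi$ and $g_\sigma$) goes through.
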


\begin{proof}
\noindent\textbf{(1)} 
We want to show that $\|T (t)x\|_X \le c_nt^{-n}\|x\|_{D_{f_\sigma}(\gamma+s,p)}$ for each $t\ge 1$ and each $x\in D_{f_{\sigma}}(\gamma+s,p)$. 

Firstly, note that for each $l\in \mathbb{N}$ such that $l>\gamma+s$, $D(A^{l})=D(B^l)$ (see~\cite[Proposition~3.1.9 (a)]{haase}) is dense in $D_{f_{\sigma}}(\gamma+s,p)$. Namely, fix $l$ satisfying such conditions; we argue as follows: for each $m \in \mathbb{N}$, $x \in D([f_\sigma(B)]^m)$ and $\delta \in [0, 1-\sigma)$, there exists $y\in X$
 such that $x = \psi_{\varepsilon}(B^{-1})^{m\sigma} B^{-\delta m} B^{m(\sigma+\delta)-m} y$. Since 
$\psi_{\varepsilon}(B^{-1})^{m\sigma} B^{-\delta m} \in \mathcal{L}(X)$, it follows that  
$x \in D(B^{m-(\delta+\sigma)m})$. Now, choose $m\in \mathbb{N}$ such that $m > \dfrac{l}{1-(\delta+\sigma)}>\gamma+s$. Then, one has the inclusions  
$D([f_\sigma(B)]^m) \subset D(B^{l}) \subset D_{f_{\sigma}}(\gamma+s,p)$, and since $D([f_\sigma(B)]^m)$ is dense in $D_{f_{\sigma}}(\gamma+s,p)$ (see \cite[Theorem 6.6.1]{haase}), the result follows.

Next, we follow the steps presented in the proof of \cite[Theorem~3.2]{deng}, with the necessary modifications: replace $D(A)$ with $D(f_\sigma(B))$, $g(t)= t^n\textbf{1}_{(0,\infty)}(t)T(|t|)x$ with $g(t):= t^n\textbf{1}_{(0,\infty)}(t)T(|t|)B^{-\sigma s}$ $\Psi_\varepsilon(B^{-1})^{-\sigma s}x$ and $x\in D(A^l)$ for $l\in \mathbb{N}$ sufficiently large.  Furthermore, note that the generator of $(T(t))_{t\ge 0}$ restricted to $D_{f_\sigma}(\gamma, p)$ is the part of $A$ in $D_{f_{\sigma}}(\gamma,p)$. More specifically, it suffices to show that 
\begin{equation*}
    \|f\|_{B^s_{p,p}(\mathbb{R}, D_{f_\sigma}(\gamma,p))}\lesssim \|x\|_{(D_{f_\sigma}(\gamma,p), D(B|D_{f_\sigma}))_{s,p}}\lesssim\|x\|_{(D_{f_\sigma}(\gamma,p), D_{f_\sigma}(\gamma+1,p))_{s,p}}\lesssim \|x\|_{D_{f_\sigma(\gamma+s,p)}};
\end{equation*}
the first inequality comes from \cite[Proposition~2.4]{deng}, and the third one comes from \cite[Theorem~1.10.2]{Hans}, so it remains to show the second one.

In order to do that, we begin showing that $D_{f_\sigma}(\gamma+1,p)$ is isomorphic to $D(B|_Y)$, where $Y:= D_{f_\sigma}(\gamma,p)$. Namely, let $W := D(B|_Y) = \{ x \in D(B) \cap Y \mid Bx \in Y \}$ and $Z := D_{f_\sigma}(\gamma+1,p) = \{ x \in D(f_\sigma(B)) \mid f_\sigma(B)x \in Y \}$ (this last identity comes from \cite[Proposition~2.6.5]{haase}; indeed,  $f_\sigma (B)$ commutes with $R(\lambda,A)$ for each $\lambda \in \rho (A)$, so relation~(6.2) in \cite{haase} is valid and $A|_{D_{f_{\sigma}}}(\gamma,p)$ is sectorial; then, $f_{\sigma}(A|_{D_{f_{\sigma}(\gamma,p)}})=f_\sigma(A)|_{D_{f_{\sigma}(\gamma,p)}}$ and $D(f_\sigma(A)|_{D_{f_{\sigma}(\gamma,p)}})=D(f_\sigma(B)|_{D_{f_{\sigma}(\gamma,p)}})=D_{f_\sigma}(\gamma+1,p)$). 

Define $T: Z \rightarrow W$ by $T(x) = B^{-1} f_\sigma(B) x$. Note that:
\begin{enumerate}[(i)]
    \item \textit{$T$ is well-defined.}
    For each $x \in Z$, note that $B^{-1}(f_\sigma(B)x) \in D(B) \cap Y$ and $B(B^{-1}(f_\sigma(B)x)) = f_\sigma(B)x \in Y$. Therefore, $T(x) \in W$. 

    \item $T \in \mathcal{L}(Z,W)$.
    \begin{eqnarray*}
        \|T(x)\|_W &=& \|B^{-1}f_\sigma(B)x\|_Y + \|B(B^{-1}f_\sigma(B)x)\|_Y 
        \lesssim\|f_\sigma(B)x\|_Y \\
        &\le& \|x\|_Y+\|f_\sigma(B)x\|_Y=\|x\|_Z.
    \end{eqnarray*}

    \item $T^{-1}:W\rightarrow Z$, $T^{-1}(x) = f_\sigma(B)^{-1} B x$, is such that $T^{-1} \in \mathcal{L}(W,Z)$. Namely, since $f_\sigma(B)^{-1}\in \mathcal{L}(X)$, one has 
    \begin{equation*}
        \|T^{-1}(x)\|_Z = \|f_\sigma(B)^{-1} B x\|_Y + \|Bx\|_Y \lesssim \|Bx\|_Y \lesssim \|x\|_W.
    \end{equation*}

\end{enumerate}

Therefore, $D_{f_\sigma}(\gamma+1,p)$ is isomorphic to $D(B|Y)$. Now, relation $\|x\|_{(D_{f_\sigma}(\gamma,p), D(B|D_{f_\sigma}))_{s,p}}\lesssim\|x\|_{(D_{f_\sigma}(\gamma,p), D_{f_\sigma}(\gamma+1,p))_{s,p}}$ is a direct consequence of Definitions 1.1 and 1.2 in \cite{Lunardi}.

\noindent\textbf{(2)}~It follows from the same arguments presented in the proof of item~\textbf{(1)}.
\end{proof}

The following result is an extension of \cite[Corollary 1.14]{nos} to a larger class of functions.

\begin{theorem}\label{THIL}
  \rm{Let $-A$ be the generator of a $C_0$-semigroup $(T(t))_{t\ge 0}$ defined in a Hilbert space $X$. Suppose that $\overline{\mathbb{C_{-}}}\subset \rho(A)$. Let $\rho>0$ and set $\tau:= (\rho+1)(\beta+1)$.
\begin{enumerate}[(i)]
\item Suppose that for each $\lambda \in \overline{\mathbb{C}_{-}}$,
 \begin{equation*}
\|R(\lambda,A)\|_{\mathcal{L}(X)}\lesssim (1+|\lambda|)^{\beta}\ell(1+|\lambda|).
 \end{equation*}

 Then, for each $\varepsilon>0$, there exists $C_{\rho,\varepsilon} \ge 0$ such that for each $t\ge 1$ and each $x\in D_{f_{\sigma}}(\tau,p)$,
 \begin{equation*}
     \|T(t)x\|_{X}\le C_{\rho,\varepsilon}t^{-\rho}\|x\|_{D_{f_{\sigma}(\tau,p)}}.
 \end{equation*}

\item Suppose that for each $\lambda \in \overline{\mathbb{C}_{-}}$,
 \begin{equation*}
\|R(\lambda,A)\|_{\mathcal{L}(X)}\lesssim (1+|\lambda|)^{\beta}(1/\kappa(1+|\lambda|)).
 \end{equation*}

Then, there exists $\tilde{C}_{\rho} \ge 0$ such that for each $t\ge 1$ and each $x\in D_{g_{\sigma}}(\tau,p)$, 
 \begin{equation*}
\|T(t)x\|_{X}\le \tilde{C}_{\rho}t^{-\rho}\|x\|_{D_{g_{\sigma}}(\tau,p)}.
 \end{equation*}
\end{enumerate}
}
\end{theorem}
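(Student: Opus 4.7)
My plan is to execute the chain of implications announced at the beginning of Section~\ref{three}: Proposition~\ref{Prop3.1} controls the powers of the resolvent on $i\mathbb{R}$ as operators out of $D_{f_\sigma}(\cdot,p)$ (respectively $D_{g_\sigma}(\cdot,p)$), Proposition~\ref{prop22} transfers these bounds into boundedness of the associated Fourier multipliers --- this is where the Hilbert hypothesis enters, since then $X$ has Fourier type $2$ and the exponent $\tfrac{1}{p}-\tfrac{1}{p'}$ appearing in Proposition~\ref{prop22} vanishes --- and Theorem~\ref{teo32} converts the multiplier bounds into the desired pointwise decay of $T(t)$.

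I would first prove part (i) for an integer $\rho = n\in\mathbb{N}_0$, so that $\tau = (n+1)(\beta+1)$. Set $\gamma:=(n+1)(\beta+1)$ and $p=q=2$. Proposition~\ref{Prop3.1}(1) yields, for every $k\in\{n-1,n,n+1\}\cap\mathbb{N}$,
\[
\sup_{|\xi|\geq 1}\|R(i\xi,A)^{k}\|_{\mathcal{L}(D_{f_\sigma}(\gamma,p),X)}<\infty,
\]
so Proposition~\ref{prop22} (with Fourier type $2$) ensures that
\[
T_{R(i\cdot,A)^{k}}:B^{0}_{2,2}(\mathbb{R},D_{f_\sigma}(\gamma,p))\to L^{2}(\mathbb{R},X)
\]
is bounded; the continuous inclusion $B^{s}_{2,2}\subset B^{0}_{2,2}$, valid for every $s>0$, promotes this to boundedness from $B^{s}_{2,2}$ for any $s\in(0,1/2)$. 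Feeding the latter into Theorem~\ref{teo32}(i) with these $n$, $\gamma$, $s$, $p=q=2$, I obtain
\[
\|T(t)x\|_{X}\lesssim t^{-n}\,\|x\|_{D_{f_\sigma}(\gamma+s,p)},\qquad t\geq 1.
\]

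To reach arbitrary $\rho>0$, I would interpolate between consecutive integers: with $n:=\lfloor\rho\rfloor$ and $\theta:=\rho-n\in[0,1)$, the previous step provides, uniformly in $t\geq 1$, bounded operators $T(t):D_{f_\sigma}((n+1)(\beta+1),p)\to X$ of norm $\lesssim t^{-n}$ and $T(t):D_{f_\sigma}((n+2)(\beta+1),p)\to X$ of norm $\lesssim t^{-(n+1)}$. Theorem~\ref{inter} together with the reiteration identity
\[
\bigl(D_{f_\sigma}((n+1)(\beta+1),p),\,D_{f_\sigma}((n+2)(\beta+1),p)\bigr)_{\theta,p}=D_{f_\sigma}((\rho+1)(\beta+1),p)
\]
then delivers the inequality $\|T(t)x\|_{X}\leq C_{\rho}\,t^{-\rho}\,\|x\|_{D_{f_\sigma}(\tau,p)}$ with $\tau=(\rho+1)(\beta+1)$. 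Part (ii) follows by an identical argument, substituting Proposition~\ref{Prop3.1}(2), Theorem~\ref{teo32}(ii), and the complete Bernstein function $g_\sigma$ for their part-(1)/(i) counterparts.

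The main obstacle is a small Besov regularity loss: Proposition~\ref{prop22} only yields a multiplier bounded from $B^{0}_{2,2}$, not from $L^{2}$, so Theorem~\ref{teo32} controls $\|T(t)x\|_{X}$ by the stronger $D_{f_\sigma}(\gamma+s,p)$-norm instead of the $D_{f_\sigma}(\gamma,p)$-norm. This loss of $s>0$ is presumably what the $\varepsilon$-parametrized constant $C_{\rho,\varepsilon}$ in the statement is designed to absorb. For Hilbert spaces this loss can in fact be eliminated by invoking Plancherel directly: an essentially bounded operator-valued symbol defines a bounded Fourier multiplier $L^{2}(\mathbb{R},Y)\to L^{2}(\mathbb{R},X)$ without any Besov detour, which would allow Theorem~\ref{teo32} to be run effectively with $s=0$ and thus recover the sharp exponent $\tau=(\rho+1)(\beta+1)$.
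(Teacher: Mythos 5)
Your main line of argument does not prove the stated estimate. Running Proposition~\ref{prop22} with Fourier type $2$ and then Theorem~\ref{teo32} forces you to take some $s\in(0,1/p)$, $s>0$ (the theorem explicitly excludes $s=0$), so what you obtain is
\begin{equation*}
\|T(t)x\|_X\lesssim t^{-n}\|x\|_{D_{f_\sigma}((n+1)(\beta+1)+s,\,p)},
\end{equation*}
i.e.\ a bound against the strictly smaller space $D_{f_\sigma}(\tau+s,p)$, whereas the theorem asserts the bound against $D_{f_\sigma}(\tau,p)$ with $\tau=(\rho+1)(\beta+1)$ exactly. Your suggestion that the $\varepsilon$ in $C_{\rho,\varepsilon}$ is ``designed to absorb'' this loss is a misreading: that $\varepsilon$ is the parameter of the auxiliary complete Bernstein function $\Psi_\varepsilon$ entering $f_\sigma$, not a slack in the interpolation index, so the regularity loss cannot be hidden there. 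Your closing remark correctly identifies that Plancherel should remove the loss in the Hilbert setting, but ``running Theorem~\ref{teo32} effectively with $s=0$'' is not something that theorem permits, and you do not supply the transference mechanism that would convert an $L^2$-multiplier bound for $R(i\cdot,A)^k$ into the decay of $T(t)$; that mechanism is precisely the missing (and central) step.

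For comparison, the paper avoids the Besov machinery altogether at this point: it splits the symbol with a cutoff $\psi\in C_c^\infty(\mathbb{R})$, $\psi\equiv1$ on $[-1,1]$, notes that $\psi R(i\cdot,A)^k\in L^1\subset\mathcal{M}_{1,\infty}(\mathbb{R},\mathcal{L}(D_{f_\sigma}((n+1)(\beta+1),p),X))$, uses Proposition~\ref{Prop3.1} together with Plancherel to get $(1-\psi)R(i\cdot,A)^k\in L^\infty\subset\mathcal{M}_{2,2}$, and then invokes Theorem~4.6 of~\cite{rozendaal}, whose hypotheses are exactly this low/high frequency splitting, to conclude $\|T(t)\|_{\mathcal{L}(D_{f_\sigma}((n+1)(\beta+1),p),X)}\lesssim t^{-n}$ with no loss; non-integer $\rho$ is then handled by real interpolation via Theorem~\ref{inter}, which is the same reiteration step you propose and is fine. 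To complete your argument you would either have to cite that $(L^p,L^q)$-multiplier transference theorem (or reprove its $p=q=2$ case), since Theorem~\ref{teo32} as stated cannot deliver the endpoint $s=0$.
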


\begin{proof}
\noindent~\textbf{(i)}. Let $n\in \mathbb{N}_0$ and let $\psi\in C^{\infty}_c(\mathbb{R})$ be such that $\psi \equiv 1$ in $[-1,1]$; then,
\begin{equation*}
   [\xi\mapsto \psi(\xi)R(i\xi,A)^k]\in L^{1}(\mathbb{R}, \mathcal{L}(D_{f_\sigma}((n+1)(\beta+1),p),X))\subset \mathcal{M}_{1,\infty}(\mathbb{R}, \mathcal{L}(D_{f_\sigma}((n+1)(\beta+1),p),X)),
\end{equation*}
and by Proposition~\ref{Prop3.1}, 
\begin{eqnarray*}
   [\xi\mapsto (1-\psi(\xi))R(i\xi,A)^k]&\in& L^{\infty}(\mathbb{R}, \mathcal{L}(D_{f_\sigma}((n+1)(\beta+1),p),X))\\
   &\subset &\mathcal{M}_{2,2}(\mathbb{R}, \mathcal{L}(D_{f_\sigma}((n+1)(\beta+1),p),X)).
\end{eqnarray*}

Therefore, it follows from \cite[Theorem~4.6]{rozendaal} (note that $D_{f_\sigma}((n+1)(\beta+1),p),X)$ satisfies the hypotheses stated there) that there exists $C_{n,\varepsilon}\geq 0$ such that for each $t\geq 1$,
\begin{equation*}
    \|T(t)\|_{\mathcal{L}(D_{f_\sigma}((n+1)(\beta+1),p),X)}\leq C_{n,\varepsilon} t^{-n}.
\end{equation*}

Now, for each $\rho>0$, let $\theta \in (0,1)$ and $n\in \mathbb{N}_0$ be such that $\rho=(1-\theta)n+\theta (n+1)$; it follows from Theorem~\ref{inter} that there exists $C_{\rho,\varepsilon}\geq 0$ such that for each $t\ge 1$,
\begin{equation*}
    \|T(t)\|_{\mathcal{L}(D_{f_\sigma}((\rho+1)(\beta+1),p),X)}\leq C_{\rho,\varepsilon} t^{-\rho}.
\end{equation*}

\

\textbf{(2)}. It follows from the same arguments presented in the proof of item~\textbf{(1)}.

\end{proof}

The following result is a version of \cite[Theorem 3.3]{deng} for the case where the norm of the resolvent is bounded by a regularly varying function of index $\beta$.

\begin{theorem}\label{teo31}
 \rm{Let $-A$ be the generator of a $C_0$-semigroup $(T(t))_{t\ge 0}$ defined in a Banach  space $X$ with Fourier type $p\in [1,2]$. Suppose that $\overline{\mathbb{C_{-}}}\subset \rho(A)$. Let $\rho\ge 0$, let $r\in(0,\infty]$ be such that $\frac{1}{r}=\frac{1}{p}-\frac{1}{p'}$, and set $\tau:= (\rho+1)(\beta+1)+\frac{1}{r}$. 
 \begin{enumerate}[(i)]
\item Suppose that for each $\lambda \in \overline{\mathbb{C}_{-}}$,
 \begin{equation*}
\|R(\lambda,A)\|_{\mathcal{L}(X)}\lesssim (1+|\lambda|)^{\beta}\ell(1+|\lambda|).
 \end{equation*}
 Then, for each $\varepsilon>0$, there exists $C_{\rho,\varepsilon} \ge 0$ such that for each $t\ge 1$ and each $x\in D_{f_\sigma}(\tau,p)$,
 \begin{equation*}
     \|T(t)B^{-\sigma}\Psi_{\varepsilon}(B^{-1})^{-\sigma}x\|_{X}\le C_{\rho,\varepsilon}t^{-\rho}\|x\|_{D_{f_\sigma}(\tau,p)}.
 \end{equation*}

\item Suppose that for each $\lambda \in \overline{\mathbb{C}_{-}}$,
 \begin{equation*}
\|R(\lambda,A)\|_{\mathcal{L}(X)}\lesssim (1+|\lambda|)^{\beta}(1/\kappa(1+|\lambda|)).
\end{equation*}
 Then, there exists $\tilde{C}_{\rho} \ge 0$ such that for each $t\ge 1$ and each $x\in D_{g_{\sigma}}(\tau,p)$, 
 \begin{equation*}
\|T(t)B^{-\sigma} \Phi(B)^{-\sigma}x\|_{X}\le \tilde{C}_{\rho}t^{-\rho}\|x\|_{D_{g_{\sigma}}(\tau,p)}.
 \end{equation*}
\end{enumerate}
}
\end{theorem}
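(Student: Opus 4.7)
The plan is to mimic the architecture of the proof of Theorem~\ref{THIL} in the Banach-space setting: upgrade the pointwise resolvent estimate of Proposition~\ref{Prop3.1} to a Fourier-multiplier bound on a suitable Besov space, feed this into Theorem~\ref{teo32} to obtain polynomial decay of integer order, and finally recover all real $\rho\ge 0$ by real interpolation (Theorem~\ref{inter}). I only sketch case~(i); case~(ii) is identical after replacing $\ell$, $f_\sigma$, and Proposition~\ref{Prop3.1}(1) by $\kappa$, $g_\sigma$, and Proposition~\ref{Prop3.1}(2).

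Fix $n\in\mathbb{N}_0$, set $\gamma_n:=(n+1)(\beta+1)$ and $Y_n:=D_{f_\sigma}(\gamma_n,p)$. Since real interpolation of Banach spaces of Fourier type $p$ produces spaces of Fourier type $p$, $Y_n$ has Fourier type $p$. Choose $\psi\in C_c^\infty(\mathbb{R})$ with $\psi\equiv 1$ on $[-1,1]$ and split $R(i\xi,A)^k = \psi(\xi)R(i\xi,A)^k+(1-\psi(\xi))R(i\xi,A)^k$ for each $k\in\{n-1,n,n+1\}\cap\mathbb{N}$. The first summand is compactly supported and continuous on its support, hence lies in $L^1(\mathbb{R};\mathcal{L}(Y_n,X))$ and defines a bounded Fourier multiplier between $L^p$-spaces. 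The second summand is uniformly bounded in $\mathcal{L}(Y_n,X)$ by Proposition~\ref{Prop3.1}(1), so Proposition~\ref{prop22} yields boundedness of $T_{(1-\psi)R(i\cdot,A)^k}:B^{1/r}_{p,p}(\mathbb{R},Y_n)\to L^{p'}(\mathbb{R},X)$. Summing the two contributions gives boundedness of $T_{R(i\cdot,A)^k}:B^{1/r}_{p,p}(\mathbb{R},Y_n)\to L^{p'}(\mathbb{R},X)$ for each admissible $k$.

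Theorem~\ref{teo32}(i) then applies with $s=1/r$, $q=p'$, and $\gamma=\gamma_n$, producing $C_{n,\varepsilon}\ge 0$ such that for every $t\ge 1$,
\[
\|T(t)\|_{\mathcal{L}(D_{f_\sigma}(\gamma_n+1/r,p),X)}\le C_{n,\varepsilon}\,t^{-n}.
\]
Applying this estimate at consecutive integers $n$ and $n+1$ and interpolating via Theorem~\ref{inter}, using the standard reiteration
\[
\bigl(D_{f_\sigma}(\gamma_n+1/r,p),\,D_{f_\sigma}(\gamma_{n+1}+1/r,p)\bigr)_{\theta,p}=D_{f_\sigma}((\rho+1)(\beta+1)+1/r,p)
\]
for $\rho=(1-\theta)n+\theta(n+1)\in[n,n+1]$, one obtains the desired bound $\|T(t)x\|_X\le C_{\rho,\varepsilon}\,t^{-\rho}\|x\|_{D_{f_\sigma}(\tau,p)}$ for every $\rho\ge 0$, $t\ge 1$, and $x\in D_{f_\sigma}(\tau,p)$.

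The main obstacles are twofold. First, one must verify cleanly that the real-interpolation scale $D_{f_\sigma}(\cdot,p)$ genuinely inherits Fourier type $p$ from $X$ and that the reiteration identity above holds in the stated form; both are standard but require invoking the appropriate structure of the complete Bernstein function $f_\sigma$. Second, Theorem~\ref{teo32} requires $s\in(0,1/p)$, which is compatible with $s=1/r=1/p-1/p'$ precisely when $p\in(1,2)$; the Hilbert endpoint $p=2$ is absorbed by Theorem~\ref{THIL}, while $p=1$ ($r=1$) is degenerate and, since it corresponds to trivial Fourier type, reduces to a direct semigroup bound without the Besov machinery.
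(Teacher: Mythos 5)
Your proof is correct and follows essentially the same route as the paper: Proposition~\ref{Prop3.1} gives uniform boundedness of the symbols $R(i\cdot,A)^k$ in $\mathcal{L}(D_{f_\sigma}((n+1)(\beta+1),p),X)$, Proposition~\ref{prop22} converts this into boundedness of $T_{R(i\cdot,A)^k}:B^{1/r}_{p,p}\to L^{p'}$, Theorem~\ref{teo32} (with $s=1/r$, $q=p'$) yields the integer-order decay, the endpoint $p=2$ is absorbed by Theorem~\ref{THIL}, and non-integer $\rho$ is recovered by interpolation via Theorem~\ref{inter}, exactly as in the paper. The only deviation is your low/high-frequency splitting with $\psi$, which is unnecessary here (and, as written, ``summing the two contributions'' mixes different source/target spaces): since $\overline{\mathbb{C}_{-}}\subset\rho(A)$, the full symbol $R(i\cdot,A)^k$ is already uniformly bounded on all of $\mathbb{R}$, so Proposition~\ref{prop22} applies to it directly, which is what the paper does.
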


\begin{proof}
\textbf{(i)}  We proceed as in the proof of \cite[Theorem~3.3]{deng}.

 \noindent~1.  \textbf{Case} $\rho\in \mathbb{N}_0$. Set $n=\rho$. It follows from Proposition~\ref{prop3.1} that for each $k\in\{0, \ldots, n+1\}$, 
\begin{equation*}
\sup\{\|R(i\xi,A)^{k}\|_{\mathcal{L}(D_{f_\sigma}((n+1)(\beta+1),p),X)}\mid \xi\in \mathbb{R}\}<\infty,
\end{equation*}
and so, by Proposition~\ref{prop22},
\begin{equation*}
    T_{R(i\cdot,A)^{k}}: B^{\frac{1}{r}}_{p,p}(\mathbb{R}; D_{f_\sigma}((n+1)(\beta+1),p),X))\rightarrow L^{p'}(\mathbb{R}; X)
\end{equation*}
is bounded for each $k\in\{0,\ldots,n+1\}$.

In case $p\in[1,2)$,  it follows from Theorem~\ref{teo32} (with $s=1/r$ and $q=p'$) that there exists $C_{n,\varepsilon} \ge 0$ such that for each $t\ge 1$ and each $x\in D_{f_\sigma}((n+1)(\beta+1)+1/r,p)$,
 \begin{equation*}
\|T(t)B^{-\sigma}\Psi_{\varepsilon}(B^{-1})^{-\sigma}x\|_{X}\le C_{n,\varepsilon}t^{-n}\|x\|_{D_{f_{\sigma}}((n+1)(\beta+1)+1/r,p)}.
\end{equation*}

\
 
\noindent~2. \textbf{Case} $\rho\in\mathbb{R}_+\setminus\mathbb{N}$. Set $n=\lfloor \rho\rfloor$ and let $\theta \in (0,1)$ be such that $\rho=(1-\theta)n+\theta (n+1)$. It follows from Theorem~\ref{inter1} that for each $t\geq 1$,
\begin{equation}\label{rho}
    \|T(t)B^{-\sigma}\Psi_{\varepsilon}(B^{-1})^{-\sigma}\|_{\mathcal{L}(D_{f_{\sigma}}((\rho+1)(\beta+1)+1/r,p),X)} \lesssim t^{-\rho}.
\end{equation}

\noindent~\textbf{(ii)} The proof follows from the same ideas presented in the proof of item~(i). 
\end{proof}

The following result, a direct consequence of Theorem~\ref{teo31}, will be used in the proof of Theorem~\ref{theo4.5}.

\begin{corollary}\label{RFINAL}

\rm{Let $-A$ be the generator of a $C_0$-semigroup $(T(t))_{t\ge 0}$ defined in a Banach  space $X$ with Fourier type $p\in [1,2]$. Suppose that $\overline{\mathbb{C_{-}}}\subset \rho(A)$. Let $\rho>0$, let $r\in(0,\infty]$ be such that $\frac{1}{r}=\frac{1}{p}-\frac{1}{p'}$.
 \begin{enumerate}[(i)]
\item Suppose that for each $\lambda \in \overline{\mathbb{C}_{-}}$,
 \begin{equation*}
\|R(\lambda,A)\|_{\mathcal{L}(X)}\lesssim (1+|\lambda|)^{\beta}\ell(1+|\lambda|).
 \end{equation*}

 Then, for each $\varepsilon>0$, there exists $C_{\rho,\varepsilon} \ge 0$ such that for each $t\ge 1$,
 \begin{equation*}
     \|T(t)B^{-\beta(\rho+1)-1/r}\Psi_\varepsilon(B^{-1})^{\rho+1}\|_{\mathcal{L}(X)}\le C_{\rho,\varepsilon}t^{-\rho}.
 \end{equation*}

\item Suppose that for each $\lambda \in \overline{\mathbb{C}_{-}}$,
 \begin{equation*}
\|R(\lambda,A)\|_{\mathcal{L}(X)}\lesssim (1+|\lambda|)^{\beta}(1/\kappa(1+|\lambda|)).
\end{equation*}

 Then, there exists $\tilde{C}_{\rho} \ge 0$ such that for each $t\ge 1$, 
 \begin{equation*}
\|T(t)B^{-\beta(\rho+1)-1/r} \Phi(B)^{\rho+1}\|_{X}\le \tilde{C}_{\rho}t^{-\rho}.
 \end{equation*}
\end{enumerate}
}
\end{corollary}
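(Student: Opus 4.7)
The corollary follows at once from Theorem~\ref{teo31} once one establishes the bounded embeddings $B^{-\tau}\Psi_\varepsilon(B^{-1})\in \mathcal{L}(X,D_{f_\sigma}(\tau,p))$ in item~(i), and analogously $B^{-\tau}\Phi(B)\in \mathcal{L}(X,D_{g_\sigma}(\tau,p))$ in item~(ii). Indeed, writing $y=B^{-\tau}\Psi_\varepsilon(B^{-1})x$ in case~(i) and applying Theorem~\ref{teo31}(i) gives
\[
\|T(t)y\|_X\le C_{\rho,\varepsilon}\,t^{-\rho}\|y\|_{D_{f_\sigma}(\tau,p)}\lesssim t^{-\rho}\|x\|_X,
\]
which is the required estimate, and case~(ii) is entirely analogous.

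To establish the embedding in~(i), the plan is to show that $f_\sigma(B)^{\tau+\delta}B^{-\tau}\Psi_\varepsilon(B^{-1})\in \mathcal{L}(X)$ for some sufficiently small $\delta>0$, and then to invoke the chain
\[
B^{-\tau}\Psi_\varepsilon(B^{-1})(X)\subset D(f_\sigma(B)^{\tau+\delta})\subset D_{f_\sigma}(\tau+\delta,\infty)\subset D_{f_\sigma}(\tau,p),
\]
the second inclusion coming from the standard embedding $D(C^{\alpha})\subset D_C(\alpha,\infty)$ applied to $C=f_\sigma(B)$, and the third being monotonicity of the real interpolation spaces in the first parameter. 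Since $\overline{\mathbb{C}_-}\subset\rho(A)$ forces $0\in\rho(B)$, Theorem~\ref{The2.3} gives that $f_\sigma(B)$ is an invertible sectorial operator, so its fractional powers are well defined. Inside the functional calculus, the factorization $f_\sigma(\lambda)=\lambda^{\beta/(\beta+1)}\Psi_\varepsilon(1/\lambda)^{-1/(\beta+1)}$ reduces the boundedness of $f_\sigma(B)^{\tau+\delta}B^{-\tau}\Psi_\varepsilon(B^{-1})$ to the boundedness of the scalar symbol
\[
\lambda\longmapsto \lambda^{\beta(\tau+\delta)/(\beta+1)-\tau}\,\Psi_\varepsilon(1/\lambda)^{1-(\tau+\delta)/(\beta+1)}
\]
on the spectrum of $B$, which lies in $[1,\infty)$.

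The symbol estimate is the main technical step and constitutes the principal obstacle. Theorem~\ref{lema21} applied to the slowly varying function $h_\varepsilon(s)=\ell(s^{1/\varepsilon})$ yields $S_{h_\varepsilon}(\lambda^\varepsilon)\sim \lambda^{-\varepsilon}\ell(\lambda)$, and hence $\Psi_\varepsilon(1/\lambda)\sim 1/\ell(\lambda)$ as $\lambda\to\infty$. Since $\tau/(\beta+1)=\rho+1+\tfrac{1}{r(\beta+1)}>1$, substitution shows that the symbol behaves asymptotically like $\lambda^{-\tau/(\beta+1)+\beta\delta/(\beta+1)}\ell(\lambda)^{\tau/(\beta+1)-1+\delta/(\beta+1)}$; for $\delta$ small enough the $\lambda$-exponent is strictly negative, and Proposition~\ref{cor2} then absorbs the positive power of $\ell$, delivering boundedness on $[1,\infty)$. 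Item~(ii) is handled identically, with $g_\sigma(\lambda)=\lambda^{\beta/(\beta+1)}\Phi(\lambda)^{-1/(\beta+1)}$ and the asymptotics $\Phi(\lambda)=\lambda S_\kappa(\lambda)\sim \kappa(\lambda)$ drawn once more from Karamata in place of the corresponding statements for $f_\sigma$ and $\Psi_\varepsilon$. The delicate point throughout is the balance between the unavoidable negative exponent of $\Psi_\varepsilon(1/\lambda)$ (coming from $\tau/(\beta+1)>1$) and the decaying power of $\lambda$, which is precisely where Proposition~\ref{cor2} enters.
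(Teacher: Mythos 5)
Your high-level reduction --- feed $y=B^{-\tau}\Psi_\varepsilon(B^{-1})x$ into Theorem~\ref{teo31} after showing that $B^{-\tau}\Psi_\varepsilon(B^{-1})$ maps $X$ boundedly into $D_{f_\sigma}(\tau,p)$ --- is reasonable, and your symbol bookkeeping together with the Karamata asymptotics $\Psi_\varepsilon(1/\lambda)\sim 1/\ell(\lambda)$ and $\Phi(\lambda)\sim\kappa(\lambda)$ is correct. The genuine gap is the step where you assert that the boundedness of $f_\sigma(B)^{\tau+\delta}B^{-\tau}\Psi_\varepsilon(B^{-1})$ ``reduces to the boundedness of the scalar symbol on the spectrum of $B$''. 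In a general Banach space no bounded $H^\infty$-calculus is assumed, so a bound for the symbol on $\sigma(B)$ does not give a bound for the operator; in addition, $\sigma(B)$ is not contained in $[1,\infty)$ (it only lies in a sector inside a shifted right half-plane), so the scalar estimate must be carried out on a sector, comparing $|S_{h_\varepsilon}(z^\varepsilon)|$ with $S_{h_\varepsilon}(|z|^\varepsilon)$ via Proposition~\ref{prop21}, and one must then verify that the resulting holomorphic symbol has enough decay along a contour avoiding the origin (here invertibility of $B$ enters) for the Dunford integral to converge in operator norm --- precisely the kind of work carried out in the proof of Proposition~\ref{prop3.1}. As written, the crucial operator bound, which is the whole content of the corollary beyond Theorem~\ref{teo31}, is asserted rather than proved.

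For comparison, the paper's route avoids any compensation between an unbounded power of $f_\sigma(B)$ and a decaying power of $B$: it notes that estimate~\eqref{rho} persists with the interpolation space replaced by the fractional domain $D(f_\sigma(B)^\tau)$, written out concretely as relation~\eqref{rel}, and then composes the operator of~\eqref{rel} with $B^{-\frac{1}{r(\beta+1)}}\Psi_{\varepsilon}(B^{-1})^{\frac{\beta}{\beta+1}}$, whose boundedness is elementary (a bounded fractional power of $B^{-1}$ times a fractional power of a complete Bernstein function of the bounded sectorial operator $B^{-1}$); no delicate symbol estimate is needed at this stage. Note also that what the paper actually proves (and later uses in the proof of Theorem~\ref{theo4.5}) is $\|T(t)B^{-\beta(\rho+1)-1/r}\Psi_\varepsilon(B^{-1})^{\rho+1}\|_{\mathcal{L}(X)}\lesssim t^{-\rho}$, carrying the full power $\Psi_\varepsilon(B^{-1})^{\rho+1}$; your argument, even with the missing functional-calculus step supplied, would only yield the literal displayed inequality with a single factor $\Psi_\varepsilon(B^{-1})$, which is formally weaker and not sufficient for the later application.
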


\begin{proof}
\noindent~\textbf{(i)}~Note that for each $\rho>0$, relation~\eqref{rho} also holds with $D_{f_{\sigma}} (\tau, p)$ replaced with $D(f_\sigma(B)^{\tau})$, where $\tau=(\beta+1)(\rho+1)+1/r$ (in the proof of Case 2 of Theorem~\ref{teo31}, the final estimate follows from Theorem~\ref{inter1}, so one can replace
$D_{f_\sigma}(\tau,p)$ in the result of Theorem~\ref{teo31} with $D_{f_\sigma}(\tau,q)$, for each $q \in [1, \infty]$,
and then the result for $D(f_{\sigma}(B)^\tau)$ follows from relation~\eqref{inter}). Namely, for each $t \geq 1$, one has
    \begin{eqnarray}\label{rel}
 \nonumber  t^{-\rho}&\gtrsim& \|T(t)B^{-\beta(\rho+1)-\frac{\beta}{r(\beta+1)}-\frac{1}{(\beta+1)r}}\Psi_\varepsilon(B^{-1})^{-\frac{1}{(\beta+1)r}}\Psi_\varepsilon(B^{-1})^{\rho+1+\frac{1}{(\beta+1)r}}\|_{\mathcal{L}(X)}\\
   &=& \|T(t)B^{-\beta(\rho+1)-\frac{1}{r}}\Psi_\varepsilon(B^{-1})^{\rho+1}\|_{\mathcal{L}(X)}
    \end{eqnarray}

\noindent~\textbf{(ii)} The proof follows from the same ideas presented in the proof of item~(i). 
\end{proof}

Now, we prove Theorem~\ref{theo4.5}.

\begin{theorem}
\begin{rm}
Let $\beta>0$, $(T(t))_{t\geq 0}$ be a $C_0$-semigroup with generator $-A$  defined in a Banach space $X$ with Fourier type $p\in [1,2]$. Suppose that $\overline{\mathbb{C}_{-}}\subset \rho(A)$ and let $r\geq 0$ be such that $\frac{1}{r}=\frac{1}{p}-\frac{1}{p'}$. 
\begin{enumerate}[(i)]
\item Assume that for each $\lambda \in \mathbb{C}$ with $\text{Re}(\lambda)\le 0$,
\[\|(\lambda+A)^{-1}\|_{\mathcal{L}(X)}\lesssim (1+|\lambda|)^{\beta}\ell(1+|\lambda|),\]
where $\ell$ is an increasing and slowly varying function. Then, for each $\tau > 0$ such that $\tau>\beta+\frac{1}{r}$, there exist $c_\tau>0$ and $t_0\geq 1$ so that for each $t\geq t_0$,
 \begin{equation}\label{eq16}
 \|T(t)(1+A)^{-\tau}\|_{\mathcal{L}(X)}\leq c_{\tau}t^{1-\frac{\tau-r^{-1}}{\beta}}(\ell(t^{1/\beta}))^{\frac{\tau-r^{-1}}{\beta}}.
\end{equation}
\item Assume that for each $\lambda \in \mathbb{C}$ with $\text{Re}(\lambda)\le 0$,
\[\|(\lambda+A)^{-1}\|_{\mathcal{L}(X)}\lesssim (1+|\lambda|)^{\beta}(1/\kappa(1+|\lambda|)),\]
where $\kappa$ is an increasing and slowly varying function. Then, for each $\tau > 0$ such that $\tau>\beta+\frac{1}{r}$, there exist $\tilde{c}_\tau>0$ and $t_0\geq 1$ so that for each $t\geq t_0$,
 \begin{equation*}
 \|T(t)(1+A)^{-\tau}\|_{\mathcal{L}(X)}\leq \tilde{c}_{\tau}t\left(\frac{1}{t\kappa(t^{1/\beta})}\right)^{\frac{\tau-r^{-1}}{\beta}}.
\end{equation*}
\end{enumerate}
\end{rm}
\end{theorem}

\begin{proof} 
\noindent \textbf{(i)} We proceed as in the proof of \cite[Theorem 1.13]{nos}. The result is equivalent to the following
statement: for each $s\geq 0$, there exist $C_{s}>0$ and $t_0\geq 1$ such that for each $t\geq t_0$,
\begin{equation*}
\|T(t)(1+A)^{-\nu}\|_{\mathcal{L}(X)}\le C_{s} t^{-s} \ell(t)^{s+1},
\end{equation*}
where $\nu:=\beta(s+1)+1/r$   for $p\neq 2$ and $\nu:=\beta(s+1)$ otherwise. Set $m:=\lfloor s+1 \rfloor$ and $\delta:=s+1-\lfloor s+1 \rfloor\in [0,1)$.

\noindent \textbf{Case $\delta>0$.}

\noindent \textbf{Step 1: removing $\delta>0$}. Let $\theta=\theta(s)\in (0,1)$ be such that $s\ge \theta>0$ and set $\varepsilon:=\frac{\min\{1,\beta\}\theta}{2\delta}>0$. 
For each $x\in X$ and each $t\ge 1$, it follows from the functional calculus defined by~\eqref{cbf} (note that $(S_{h_{\varepsilon\delta}}(1/\lambda^{\varepsilon\delta}))^{\delta}\in \mathcal{CBF}$ and that $\Psi_{\varepsilon\delta}(B^{-1})(S_{h_{\varepsilon\delta}}(B^{-\varepsilon\delta}))^{\delta}=B^{-\varepsilon\delta}$) that
\begin{eqnarray}\label{ig}
\nonumber && T(t)B^{-\nu+\varepsilon\delta}B^{-\varepsilon\delta }\Psi_{\varepsilon\delta}(B^{-1})^{m}x=T(t)B^{-\nu+\varepsilon\delta}(S_{h_{\varepsilon\delta}}(B^{\varepsilon\delta}))^{\delta}\Psi_{\varepsilon\delta}(B^{-1})^{s+1}x\\ &=&\int_{0}^{\infty}B^{-\varepsilon\delta}(\lambda+B^{-\varepsilon\delta})^{-1}T(t)B^{-\nu+\varepsilon\delta}\Psi_{\varepsilon\delta}(B^{-1})^{s+1}x d\mu(\lambda).
\end{eqnarray}

Let 
\begin{equation}\label{tau}
    \tau:= \dfrac{\|T(t)B^{-\nu}\Psi_{\varepsilon\delta}(B^{-1})^{s+1}\|_{\mathcal{L}(X)}}{t^{-s+\varepsilon\delta/\beta}}>0;
\end{equation}
since $B^{-\varepsilon\delta}$ is a sectorial operator and since, for each $t\geq 0$, $T(t)$ commutes with $B^{-\varepsilon\delta}$, it follows that for each $t\geq 0$ and each $x \in X$,
\begin{eqnarray}
\label{eqq337}
\nonumber&&\left\|T(t)B^{-\nu+\varepsilon\delta}\Psi_{\varepsilon\delta}(B^{-1})^{s+1}\int_{0+}^{\tau}B^{-\varepsilon\delta}(\lambda+B^{-\varepsilon\delta})^{-1}x d\mu(\lambda)\right\|_X
\\
&&\lesssim \|T(t)B^{-\nu+\varepsilon\delta}\Psi_{\varepsilon\delta}(B^{-1})^{s+1}\|\int_{0+}^{\tau}\frac{\tau}{\lambda+\tau}d\mu(\lambda)\|x\|_X.  
\end{eqnarray}

Since $\beta>\varepsilon\delta$, one has $(s+1)\beta+1/r-\varepsilon\delta> \beta+1/r$; then, it follows from Corollary~\ref{RFINAL}-(i) (by taking $\rho=s-\varepsilon\delta/\beta$) that there exists $C_{\varepsilon\delta,s}>0$ such that for each $t\ge 1$, 
\begin{equation}\label{eqq338}
   \|T(t)B^{-\nu+\varepsilon\delta}\Psi_{\varepsilon\delta}(B^{-1})^{s+1}\|_{\mathcal{L}(X)}\leq C_{\varepsilon\delta,s} t^{-s+\varepsilon\delta/\beta}.
\end{equation}
Thus, by relations~\eqref{eqq337} and~\eqref{eqq338}, one gets
\begin{equation}\label{eqq39}
  \left\|T(t)B^{-\nu+\varepsilon\delta}\Psi_{\varepsilon\delta}(B^{-1})^{s+1}\int_{0+}^{\tau}B^{-\varepsilon\delta}(\lambda+B^{-\varepsilon\delta})^{-1}x d\mu(\lambda)\right\|_X
\lesssim  t^{-s+\varepsilon\delta/\beta} \int_{0+}^{\tau} \frac{\tau}{\lambda+\tau} d\mu(\lambda)\|x\|_X.
\end{equation}
Note also that since $B^{-\varepsilon\delta}$ is a sectorial operator, it follows that for each $t\geq 0$ and each $x \in X$,
\begin{eqnarray}\label{eqq310}
  \nonumber  &&\left\|T(t)B^{-\nu+\varepsilon\delta}\Psi_{\varepsilon\delta}(B^{-1})^{s+1}\int_{\tau}^{\infty}B^{-\varepsilon\delta}(\lambda+B^{-\varepsilon\delta})^{-1}x d\mu(\lambda)\right\|_X\\
\nonumber & \lesssim& \|T(t)B^{-\nu}\Psi_{\varepsilon\delta}(B^{-1})^{s+1}\|\int_{\tau}^{\infty} 
\frac{\|x\|_X}{\lambda} d\mu(\lambda)\\
&\lesssim& 2 \|T(t)B^{-\nu}\Psi_{\varepsilon\delta}(B^{-1})^{s+1}\|\int_{\tau}^{\infty} 
\frac{1}{\lambda+\tau} d\mu(\lambda)\|x\|_X
\end{eqnarray}
By combining relations~\eqref{ig},~\eqref{tau},~\eqref{eqq39} and \eqref{eqq310}, it follows that for each $t\ge 1$, 
\begin{eqnarray*}
\|T(t)B^{-\nu}\Psi_{\varepsilon\delta}(B^{-1})^{m}\|_{\mathcal{L}(X)}&\lesssim& t^{-s+\varepsilon\delta/\beta}\left(\int_{0+}^{\tau}\frac{\tau}{\tau+\lambda} d\mu(\lambda) +\tau\int_{\tau}^{\infty} \frac{1}{\lambda+\tau} d\mu(\lambda)\right)\\
 \nonumber&\lesssim& t^{-s+\varepsilon\delta/\beta} \left(S_{h_{\varepsilon\delta}}(1/
 \tau)\right)^{\delta}. 
\end{eqnarray*}

Now, set $k_{\varepsilon\delta,s}(t):=\|T(t)B^{-\nu}\Psi_{\varepsilon\delta}(B^{-1})^{s+1}\|_{\mathcal{L}(X)}$; then, it follows from Corollary \ref{RFINAL}-(i) that there exists $c_{\varepsilon\delta,s}>0$ such that $k_{\varepsilon\delta,s}(t)\leq c_{\varepsilon\delta,s} t^{-s}$ for each $t\geq 1$. So, for each $t\ge 1$,  
\begin{equation}\label{eqq311}
\dfrac{1}{\tau}=\dfrac{t^{-s+\frac{\varepsilon\delta}{\beta}}}{k_{\varepsilon\delta,s}(t)}\ge \frac{1}{c_{\varepsilon\delta,s}} t^{\frac{\varepsilon\delta}{\beta}}.
\end{equation}Therefore, by Theorem~\ref{lema21} (with $\sigma=1$) and by relation~\eqref{tau}, one has for each sufficiently large $t$ that
\begin{eqnarray}\label{eqq312}
 \nonumber\|T(t)B^{-\nu}\Psi_{\varepsilon\delta}(B^{-1})^{m}\|_{\mathcal{L}(X)}&\lesssim& t^{-s+\varepsilon\delta/\beta} (S_{h_{\varepsilon\delta}}(1/
 \tau))^{\delta}\\
 &\lesssim& k_{\varepsilon\delta,s}(t)\left(h_{\varepsilon\delta}\left(\dfrac{t^{-s+\frac{\varepsilon\delta}{\beta}}}{k_{\varepsilon\delta,s}(t)}\right)\right)^{\delta}.
\end{eqnarray}
It follows from relation~\eqref{eqq311} and Proposition~\ref{cor2}(a) that 
for each sufficiently large $t$, 
\begin{equation}
\label{eqq313}
h_{\varepsilon\delta}\left(\frac{t^{-s+\frac{\varepsilon\delta}{\beta}}}{k_{\varepsilon\delta,s}(t)}\right)\lesssim  \frac{2c_{\varepsilon\delta,s}t^{-s+\frac{\varepsilon\delta}{\beta}}}{t^{\frac{\varepsilon\delta}{\beta}}k_{\varepsilon\delta,s}(t)}(h_{\varepsilon\delta}(t^{\frac{\varepsilon\delta}{\beta}}/(2c_{\varepsilon\delta,s})))^{\delta}\lesssim
\frac{1}{k_{\varepsilon\delta,s}(t)t^{s}}(\ell(t^{\frac{1}{\beta}}))^{\delta}.
\end{equation}

One concludes from~\eqref{eqq312} and~\eqref{eqq313} that for each sufficiently large $t$,
\begin{eqnarray}\label{eqq34}
\nonumber\|T(t)B^{-\nu}\Psi_{\varepsilon\delta}(B^{-1})^{m}\|_{\mathcal{L}(X)}\lesssim 
t^{-s}  \left(\ell(t^{\frac{1}{\beta}})\right)^{\delta}.
\end{eqnarray}

\noindent \textbf{Step 2: removing $m$}. It follows
 from the discussion presented in the beginning of \textbf{Step 1} and from the functional calculus defined by~\eqref{cbf} (note that $S_{h_\varepsilon}(1/\lambda^{\varepsilon})\in \mathcal{CBF}$ and that $\Psi_\varepsilon(B^{-1})S_{h_\varepsilon}(B^{-\varepsilon})=B^{-\varepsilon}$) that for each $x\in X$ and each $t\ge 1$, 
\begin{eqnarray*}
\nonumber T(t)B^{-\nu+\varepsilon}B^{-\varepsilon}\Psi_{\varepsilon}(B^{-1})^{m-1}x&=&T(t)B^{-\nu+\varepsilon}S_{h_\varepsilon}(B^{\varepsilon})\Psi_{\varepsilon}(B^{-1})^{m}x\\ &=&\int_{0}^{\infty}B^{-\varepsilon}(\lambda+B^{-\varepsilon})^{-1}T(t)B^{-\nu+\varepsilon}\Psi_{\varepsilon}(B^{-1})^{m}x d\mu(\lambda).
\end{eqnarray*}

Now one may follow the same steps presented in the proof of \cite[Theorem~1.13]{nos}, with
\[\tau:= \dfrac{\|T(t)B^{-\nu}\Psi_{\varepsilon}(B^{-1})^{s+1}\|_{\mathcal{L}(X)}}{t^{-s+\varepsilon/\beta}}>0,\]
and obtain, for each $t\geq 1$, the relation
\begin{eqnarray*}
\nonumber\|T(t)B^{-\nu}\Psi_{\varepsilon}(B^{-1})^{m-1}\|_{\mathcal{L}(X)}\lesssim 
t^{-s}  \left(\ell(t^{\frac{1}{\beta}})\right)^{\delta+1}.
\end{eqnarray*}

By proceeding recursively over $m$, it follows from the previous discussion that for each $t\ge 1$,
\begin{eqnarray*}
\nonumber\|T(t)B^{-\nu}\|_{\mathcal{L}(X)}\lesssim 
t^{-s}  \left(\ell(t^{\frac{1}{\beta}})\right)^{\delta+m}.
\end{eqnarray*}

\noindent \textbf{Case $\delta=0$.} Since in this case $m\in \mathbb{N}$, one just needs to proceed as in \textbf{Step 2} of the
 case $\delta>0$ in order to obtain, for each $t\geq 1$,
\begin{eqnarray*}
\nonumber\|T(t)B^{-\nu}\|_{\mathcal{L}(X)}\lesssim 
t^{-s}  \left(\ell(t^{\frac{1}{\beta}})\right)^{m}.
\end{eqnarray*}

\

\noindent \textbf{(ii)} The result is equivalent to the following
statement: for each $s\geq 0$, there exist $C_{s}>0$ and $t_0\geq 1$ such that for each $t\geq t_0$,
\begin{equation*}
\|T(t)(1+A)^{-\nu}\|_{\mathcal{L}(X)}\le C_{s} t^{-s}
(1/\kappa(t)^{s+1}),
\end{equation*}
where $\nu:=\beta(s+1)+1/r$   for $p\neq 2$ and $\nu:=\beta(s+1)$ otherwise. Set $m:=\lfloor s+1 \rfloor$ and $\delta:=s+1-\lfloor s+1 \rfloor\in [0,1)$.

\noindent\textbf{Case $\delta>0$.}

\noindent\textbf{Step 1: removing $\delta>0$}. Let $\varepsilon:=\frac{\min\{1,\beta\}}{2}>0$. For each $x\in X$ and each $t\ge 1$, one has (by relation~\eqref{cbf}; note that $S_\kappa(1/\lambda)^{\delta}\in\mathcal{CBF}$)
\begin{eqnarray*}
T(t)B^{-\nu} \Phi(B)^{m}x&=&T(t)B^{-\nu} B^{-1}(S_{\kappa}(B))^{-\delta}\Phi(B)^{s+1}x\\ &=&\int_{0}^{\infty}B^{-1}(\lambda+B^{-1})^{-1}T(t)B^{-\nu}\Phi(B)^{s+1}x d\mu(\lambda).
\end{eqnarray*}

Let 
\begin{equation}\label{vartheta}
    \sigma:= \dfrac{\|T(t)B^{-\nu-1}\Phi(B)^{s+1}\|_{\mathcal{L}(X)}}{t^{-s}}>0;
\end{equation}
by proceeding as in the proof of relation~\eqref{eqq338}, it follows from Corollary~\ref{RFINAL}-(ii) that for each $t\geq 1$,
\begin{equation}
\label{eqq314}
\left\|T(t)B^{-\nu}\Phi(B)^{s+1}\int_{0+}^{\sigma}B^{-1}(\lambda+B^{-1})^{-1}x d\mu(\lambda)\right\|_X\lesssim t^{-s} \int_{0+}^{\sigma} \frac{\sigma}{\lambda+\sigma} d\mu(\lambda)\|x\|_X. 
\end{equation}

Note that for each $t \geq 1$,
\begin{equation}\label{eqq315}
\left\|T(t)B^{-\nu}\Phi(B)^{s+1}\int_{\sigma}^{\infty}B^{-1}(\lambda+B^{-1})^{-1}x d\mu(\lambda)\right\|_X\lesssim\|T(t)B^{-\nu-1}\Phi(B)^{s+1}\|\int_{\sigma}^{\infty} \frac{1}{\lambda+\sigma} d\mu(\lambda)\|x\|_X;
\end{equation}
hence, it follows from relations~\eqref{vartheta}, \eqref{eqq314} and~\eqref{eqq315}  that for each $t\ge 1$,
\begin{eqnarray}\label{eqq29}
  \nonumber\|T(t)B^{-\nu}\Phi(B)^{m}\|_{\mathcal{L}(X)}&\lesssim& t^{-s}\left(\int_{0+}^{\sigma}\frac{\sigma}{\sigma+\lambda} d\mu(\lambda) +\sigma\int_{\sigma}^{\infty} \frac{1}{\lambda+\sigma} d\mu(\lambda)\right)\\
&\lesssim& t^{-s} \frac{\sigma}{S_{\kappa}(1/
 \sigma)^{\delta}}. 
\end{eqnarray}
By combining relation \eqref{eqq29} with Theorem~\ref{lema21}, one gets, for each sufficiently large $t$,
\begin{eqnarray} \label{eq123}
  \|T(t)B^{-\nu}\Phi(B)^{m}\|_{\mathcal{L}(X)} \lesssim  t^{-s} \frac{\sigma}{S_{\kappa}(1/
 \sigma)^{\delta}} \le  \frac{t^{-s}}{(\kappa\left(\frac{t^{-s}}{\|T(t)B^{-\nu-1}\Phi(B)^{s+1}\|_{\mathcal{L}(X)}}\right))^{\delta}}. 
\end{eqnarray}
Now, it follows from Corollary~\ref{RFINAL}-(ii) that for each $t \ge 1$, 
\begin{equation}\label{eqq317}
\|T(t)B^{-\nu-1} \Phi(B)^{s+1}\|_{\mathcal{L}(X)}\lesssim \|T(t)B^{-\nu-1}\Phi(B)^{s+1+1/\beta}\|_{\mathcal{L}(X)}\lesssim t^{-s-\frac{1}{\beta}}
\end{equation}
(note that $\Phi(B)^{-1/\beta}\in \mathcal{L}(X)$), and so
%
\begin{equation*}
  \frac{t^{-s}}{\|T(t)B^{-\nu-1}\Phi(B)^{s+1}\|_{\mathcal{L}(X)}} \gtrsim t^{\frac{1}{\beta}}.
\end{equation*}
Since $\kappa$ is an increasing function, it follows from relation~\eqref{eq123} that 
\begin{equation*}
\|T(t)B^{-\nu}\Phi(B)^{m}\|_{\mathcal{L}(X)} \lesssim \frac{t^{-s}}{(\kappa(t^{1/\beta}))^{\delta}}.
\end{equation*}

\noindent \textbf{Step 2: removing $m$}. By using arguments that are similar to those presented in item \textbf{(i)}, one has for each $t\geq 1$,
\begin{equation*}
\|T(t)B^{-\nu}\|_{\mathcal{L}(X)} \lesssim \frac{t^{-s}}{\kappa(t^{1/\beta})^{m+\delta}}.
\end{equation*}

\noindent \textbf{Case $\delta=0$.} Since in this case $m\in \mathbb{N}$, one just needs to proceed as in \textbf{Step 2} of the
 case $\delta>0$ in order to obtain, for each $t\geq 1$,
 \begin{equation*}
\|T(t)B^{-\nu}\|_{\mathcal{L}(X)} \lesssim \frac{t^{-s}}{\kappa(t^{1/\beta})^{m}}.
\end{equation*}
\end{proof}

\begin{center} 
\Large{Acknowledgments} 
\end{center}
We thank the anonymous reviewers for their valuable contributions, which were essential for improving this manuscript.GSS thanks the partial support by UESB. 


\noindent{\bf{Data availability statement}} Data sharing not applicable to this article as no datasets were generated or analyzed during the current study.

%

\noindent{\bf{Declarations}}


\noindent{\bf{Conflict of interest}}  On behalf of all authors, the corresponding author states that there is no conflict of interest.


\noindent  Email: genilson.santana@uesb.edu.br, Departamento de Ciências Exatas e Tecnológicas, UESB, Vitória da Conquista, BA, 45083-900 Brazil.

\noindent  Email: silas@mat.ufmg.br, Departamento de Matem\'atica, UFMG, Belo Horizonte, MG, 30161-970 Brazil.

\end{document}